\newtheorem{theorem}{\textbf{Theorem}}[section]
\newtheorem{lemma}{\textbf{Lemma}}[section]
\newtheorem{proposition}{\textbf{Proposition}}[section]
\newtheorem{corollary}{\textbf{Corollary}}[section]
\newtheorem{remark}{\textbf{Remark}}[section]
\newtheorem{definition}{\textbf{Definition}}[section]
\def\be{\begin{equation}}
\def\ee{\end{equation}}
\def\bea{\begin{eqnarray}}
\def\eea{\end{eqnarray}}
\def\bt{\begin{theorem}}
\def\et{\end{theorem}}
\def\bl{\begin{lemma}}
\def\el{\end{lemma}}
\def\br{\begin{remark}}
\def\er{\end{remark}}
\def\bp{\begin{proposition}}
\def\ep{\end{proposition}}
\def\bc{\begin{corollary}}
\def\ec{\end{corollary}}
\def\bd{\begin{definition}}
\def\ed{\end{definition}}
\def\uu{\mathbf{u}}
 \def\non{\nonumber }
\begin{document}

\title{Well-posedness of a Hydrodynamic Phase-field Model for Functionalized Membrane--Fluid Interaction}
\author{
{\sc Hao Wu} \footnote{School of Mathematical Sciences and Shanghai
Key Laboratory for Contemporary Applied Mathematics, Fudan
University, 200433 Shanghai, China, Email:
\textit{haowufd@fudan.edu.cn}.}\ \  and {\sc Yuchen Yang}
\footnote{School of Mathematical Sciences, Fudan
University, 200433 Shanghai, China, Email:
\textit{18110180027@fudan.edu.cn}.}}
\date{\today}
\maketitle


\begin{abstract}
In this paper, we study a hydrodynamic phase-field system modeling the deformation of functionalized membranes in incompressible viscous fluids. The governing PDE system consists of the Navier--Stokes equations coupled with a convective sixth-order Cahn--Hilliard type equation driven by the functionalized Cahn--Hilliard free energy, which describes phase separation in mixtures with an amphiphilic structure. In the three dimensional case, we first prove existence of global weak solutions provided that the initial total energy is finite. Then we establish uniqueness of weak solutions under suitable regularity assumptions only imposed on the velocity field (or its gradient). Finally, we prove the existence and uniqueness of local strong solutions for arbitrary regular initial data and derive some blow-up criteria. The results are obtained in the general setting with variable viscosity and mobility.

{\bf Keywords.} Naver--Stokes equations, functionalized Cahn--Hilliard equation, weak and strong solutions, existence, uniqueness criteria, blow-up criteria.

\textbf{AMS Subject Classification}: 35K30, 35Q35, 76D05.

\end{abstract}

\section{Introduction}\setcounter{equation}{0}
The study of formation and dynamics of biological membranes (e.g., vesicle) is an important and interesting subject in biology and bioengineering. Roughly speaking, the biomembranes consist of a bilayer of amphiphilic molecules (lipids). They separate the interior of cells from the outside environment and divides intracellular regions into different functional compartments. In various mechanical, physical and biological environments, the shape of membranes can exhibit rich geometric structures, which plays an important role in their biological function \cite{Sei}. A lot of work has been done to understand morphological changes of membranes. We refer to \cite{Al17,Al14,Ca06,Ca70,DLRW05,DLRW05a,DLRW09, DLW04, DLW06,HE,Lo09} for mathematical modeling and  numerical simulations, see also \cite{CW20,CG19,CL12,DLL07,EB15,LYN,WX13} for rigorous analysis.

The equilibrium configurations of membranes can be characterized by the Helfrich bending elasticity energy \cite{Ca70,HE}.
In the simplest isotropic case, if the evolution of the membrane does not change its topology, the bending energy takes the following
simplified form:
 \be
E_{\mbox{elastic}}=\int_{\Gamma}\frac{k}{2}(H-H_0)^2\,\mathrm{d}S, \non
  \ee
where $k$ is the bending rigidity, $H$ is the mean curvature of the membrane surface and $H_0$ is the spontaneous
curvature that describes certain physical/chemical difference between the inside and the outside of the membrane. For a homogeneous bilayer lipid membrane $H_0=0$.

In order to understand the complex non-equilibrium motion and shape deformation of biomembranes, the coupling of the dynamics with the surrounding fluid has to be taken into account. In this aspect, the phase field (or diffuse interface) method provides a simple and efficient way to incorporate physical effects at different scales \cite{DLW04,Lo09,YFLS}. Let $\phi$ be the phase function defined on the physical domain $\Omega$, which indicates the inside and outside of the membrane $\Gamma$ by taking distinct values $1$ and $-1$, respectively. Then the Helfrich energy $E_{\mbox{elastic}}$ is approximated by a modified Willmore energy (see \cite{DLW04,DLRW05})
 \be\non
 E_\varepsilon(\phi)=\dfrac{k}{2\varepsilon}\int_{\Omega}
 \left(-\varepsilon \Delta \phi+ \frac{1}{\varepsilon}(\phi^2-1)\phi\right)^2\,\mathrm{d}x,
 \ee
where $\varepsilon$ is a small positive parameter (compared to the vesicle size) that characterizes the transition layer of the phase function.
We refer to \cite{DLRW05,Wang08} for the convergence of the phase-field model to the original sharp interface model as the transition width of the diffuse interface $\varepsilon \rightarrow 0$. Since the biomembrane is usually a closed surface (so that change in volume is much slower than the change of shape) and the presence of the lipid molecules yields certain inextensibility of the membrane, some constraints on its volume and surface area have to be imposed. In the phase-field framework, the volume and surface area of are approximated by \cite{DLW04}:
$$
A(\phi)=\int_\Omega \phi\,\mathrm{d}x,\quad B(\phi)=\int_\Omega \Big[\frac{\varepsilon}{2} |\nabla \phi|^2 + \frac{1}{4\varepsilon}(\phi^2-1)^2\Big] \,\mathrm{d}x.
$$
Lagrange multipliers should be introduced to preserve the volume and surface area \cite{DLRW09,DLW06}.
In an alternative manner, suitable penalty terms are introduced in the energy to enforce the constraints \cite{DLRW05, DLRW05a,DLW06}:
 \begin{equation}\non
 E_{\text{penalty}}(\phi)=E_\varepsilon(\phi)+\frac12 M_1(A(\phi)-\alpha)^2+\frac12M_2(B(\phi)-\beta)^2,\quad M_1,M_2>0,
 \end{equation}
where $\alpha=A(\phi_0)$ and $\beta=B(\phi_0)$ are determined by the initial value of the
phase function $\phi_0$. For numerical and analytical studies of the phase-field model for membrane without the coupling with the hydrodynamics of the surrounding fluids, we refer to \cite{Ca06,CL11,CL12,DLRW05,DLRW05a,DLW04,DLW06,Wang08} and the references cited therein.

When the fluid interaction is considered, the situation turns out to be more complicated. Several fluid-phase-field coupling systems have been derived, e.g., via the variational principle, to investigate the complicated interactions between membrane elasticity and hydrodynamic forces  \cite{Al14,DLRW09,GKL18,Lo16}. Here, we only focus on some related analytical studies. In \cite{DLL07}, the authors studied a hydrodynamical system in which the phase-field function is governed by an advected Allen--Cahn type equation for the penalized energy $E_{\text{penalty}}(\phi)$. For the initial boundary value problem subject to a no-slip boundary condition for the velocity field and Dirichlet boundary conditions for the phase function, they proved obtained the existence of global weak solutions by using the Galerkin method and showed the uniqueness of solutions in a more regular class, see \cite{EB15} for extensions to a regularized version of the problem involving the $\alpha$-Navier--Stokes equations. Well-posedness of strong solutions was investigated in \cite{Ka18,LYN}. In the periodic setting, further results on the regularity, uniqueness and long-time behavior of solutions have been obtained in \cite{WX13}, see also \cite{Zhao13,Zhao14,Zhao15} for some results on the uniqueness and blow-up criteria. However, we note that when both the volume and surface area constraints are imposed by Lagrange multipliers, analysis of the resulting hydrodynamic system (see \cite{DLRW09}) remains open. On the other hand, the authors of \cite{CG18,CG19} analyzed an alternative model, in which the phase-field function is governed by a convective Cahn--Hilliard type equation so that the volume conservation can be automatically guaranteed. Adopting the penalty of surface area constraint, the authors proved eventual regularity of global weak solutions and showed the convergence to equilibrium as times goes to infinity by using the \L ojasiewicz--Simon approach.

In this paper, we consider a hydrodynamic phase-field model that describes the deformation of functionalized membranes in an incompressible viscous fluid, or the phase separation process of incompressible viscous two-phase  flows with an amphiphilic structure. The resulting system of partial differential equations reads as follows
\begin{align}
  &\partial_t \uu +(\uu\cdot\nabla) \uu-\nabla \cdot (2\nu(\phi)D \uu) +\nabla P =\mu \nabla\phi,
  \label{NS}\\
  &\nabla\cdot \uu=0, \label{incom}\\
  &\partial_t \phi+\uu\cdot\nabla\phi=\nabla \cdot(m(\phi)\nabla \mu),  \label{phasefield}\\
  &\mu=- \Delta \omega + f'(\phi) \omega  + \eta \omega, \label{pot1}\\
  &\omega= - \Delta \phi + f(\phi), \label{pot2}
\end{align}
in $\Omega\times(0,T)$, where $\Omega \subset\mathbb{R}^3$ is a bounded domain with smooth boundary $\partial\Omega$ and $T>0$.
Here, the order parameter $\phi$ denotes the difference in volume fractions of the binary mixture. The fluid velocity $\uu$ is taken as the volume-averaged velocity with $D\uu=\frac{1}{2}(\nabla\uu+\nabla^ \mathrm{T}\uu)$ being the symmetrized velocity gradient, and the scalar function $P$ stands for the (modified) pressure. The system \eqref{NS}--\eqref{pot2} is subject to the following boundary conditions
\begin{alignat}{3}
&\uu=\mathbf{0},\quad {\partial}_{\mathbf{n}}\phi={\partial}_{\mathbf{n}}\Delta\phi= {\partial}_{\mathbf{n}}\mu=0,\qquad\qquad &\textrm{on}& \   \partial\Omega\times(0,T),
\label{boundary}
\end{alignat}
and the initial conditions
\begin{alignat}{3}
&\uu|_{t=0}=\uu_{0}(x),\ \ \phi|_{t=0}=\phi_{0}(x),\qquad &\textrm{in}&\ \Omega.
\label{IC}
\end{alignat}
In \eqref{boundary}, $\mathbf{n}=\mathbf{n}(x)$ denotes the unit outward normal vector on $\partial\Omega$.

Different from the models for vesicle membranes that have been analyzed in the literature, our system is driven by the following functionalized Cahn--Hilliard free energy
\begin{equation}\label{defiE}
  E(\phi) = \int_\Omega \Big[ \frac12 \big(- \Delta \phi + f(\phi) \big)^2 + \eta \Big( \frac12 |\nabla \phi|^2 + F(\phi) \Big) \Big]\, \mathrm{d}x.
\end{equation}
where $f(\phi)=\phi^3-\phi$ and $F(\phi)=\frac{1}{4}(\phi^2-1)^2$. For the sake of simplicity, we have set $\varepsilon=1$ in the expression of \eqref{defiE} since its value does not play a role in the subsequent mathematical analysis. The energy $E(\phi)$ was derived in \cite{GS90} to model phase separation of mixtures with an amphiphilic structure. Later on, it was extended to describe nanoscale morphology changes in functionalized polymer chains \cite{PW09} and bilayer membranes \cite{GHPL,DP13,DP15}. Instead of minimizing or preserving the surface area, functionalized materials (membranes) have embedded charged groups which interact exothermically with polar solvents, spontaneously generating surfactant loaded interfaces \cite{GHPL}. The parameter $\eta<0$ in \eqref{defiE} reflects the competition between the square of variational derivative of the classical Cahn--Hilliard free energy $E_{\text{CH}}(\phi)= \int_\Omega \frac12 |\nabla \phi|^2 + F(\phi)\,\mathrm{d}x$ against itself, which implies balancing of the elastic deformation against hydrophilic surface interactions. The functionalized Cahn--Hilliard energy can incorporate the propensity of the amphiphilic surfactant phase to drive the creation of interfaces and naturally produce stable bilayers, or homoclinic interfaces with an intrinsic width \cite{DP13}. Minimization problems, bilayer structures, pearled patterns, and network bifurcations related to the functionalized Cahn--Hilliard energy have been extensively studied, see \cite{DP15,PZ13,PW17} and the references therein. When the fluid coupling is neglected (i.e., setting $\uu=\mathbf{0}$ in \eqref{phasefield}), for the functionalized Cahn--Hilliard equation subject to periodic boundary conditions, the authors of \cite{DLP19} proved the existence of global weak
solutions in the case of a regular potential and a degenerate mobility. For $\eta\in \mathbb{R}$, $m=1$, and a regular potential, existence and uniqueness of global solutions in the Gevrey class were established in \cite{CW20}, again in the periodic setting. In the recent work \cite{SW20}, the authors considered the case $\eta\in\mathbb{R}$, $m=1$ with a physically relevant logarithmic potential $F(\phi)= \frac12 (1 + \phi) \ln  (1 + \phi) + \frac12 (1 - \phi) \ln (1 - \phi) - \frac{\lambda}{2} \phi^2$, $\phi\in (-1,1)$. By overcoming the difficulty from singular diffusion terms, the authors proved existence, uniqueness and regularity of global weak solutions. Besides, they proved existence of the global attractor for the associated dynamical process in a suitable complete metric space.

To the best of our knowledge, there seems no analytical result for the hydrodynamic problem \eqref{NS}--\eqref{IC} in the literature.
Thus, the aim of this paper is to perform a first step analysis on its well-posedness, under a general and physically reasonable setting with variable fluid viscosity and mobility. Based on the energy dissipation structure of problem \eqref{NS}--\eqref{IC} (see \eqref{basic energy law} below), we are able to prove the existence of global weak solutions (see Theorem \ref{thm:weak}) by employing a suitable Galerkin approximation scheme. After that, we derive some sufficient conditions under which a weak solution can be unique (see Theorem \ref{thm:weakuni}). Those uniqueness criteria coincide with the results for the conventional Navier--Stokes equations \cite{Be02,Ri02,Se62,SH84}. Hence, we infer that in spite of the nonlinear coupling between the equations for the velocity field $\uu$ and the phase function $\phi$, the velocity field turns out to play a dominant role in the study of uniqueness property for weak solutions to problem \eqref{NS}--\eqref{IC}. Next, we show existence and uniqueness of local strong solutions to the problem \eqref{NS}--\eqref{IC} (see Theorem \ref{thm:locstr}). The proof relies on some specific higher-order differential inequalities inspired by previous works on the Navier--Stokes--Cahn--Hilliard system for incompressible two-phase flows, see e.g., \cite{GMT18}. Finally, we establish some blow-up criteria for local strong solutions to problem \eqref{NS}--\eqref{IC}, which again only involve the velocity field or its gradient (see Theorems \ref{thm:buc}).

There are several issues remain open for problem \eqref{NS}--\eqref{IC}. For instance, the existence of a unique global strong solution under certain additional assumption either on the lower bound for the fluid viscosity (i.e., large viscosity) or on the initial data (i.e., near equilibrium). Moreover, it will be interesting to investigate the long-time behavior of global solutions. In particular, the convergence to a single equilibrium $(\mathbf{0},\phi_\infty)$ as $t\to +\infty$ for any bounded global weak/strong solutions, as well as Lyapunov stability of the zero velocity field and local (or global) minimizers of the elastic bending energy $E(\phi)$ (cf. \cite{CG18,CG19,WX13} for related studies on some different type of vesicle-fluid interaction models). In this aspect, a suitable gradient inequality of \L ojasiewicz--Simon type associated to the energy functional $E(\phi)$ will play a crucial role.

The remaining part of the paper is organized as follows. In Section 2, we introduce the functional settings and state the main results.
In Section 3, we prove the existence of global weak solutions and establish some uniqueness criteria only in terms of the velocity field.
In Section 4, we prove the existence as well as uniqueness of local strong solutions and then derive some blow-up criteria.
In the appendix, we present a formal physical derivation of the hydrodynamic system \eqref{NS}--\eqref{IC} via the energetic variational approach \cite{BKL16,GKL18,HKL,LW19}.

\section{Main Results}\setcounter{equation}{0}

 \subsection{Preliminaries}
 Let $X$ be a real Banach or Hilbert space. Its dual space is denoted by $X^*$, and the duality pairing between $X$ and $X^*$ is denoted by $\langle \cdot,\cdot\rangle_{X^*,X}$. Given an interval $I$ of $\mathbb{R}^+$, we introduce the function space $L^p(I;X)$ with $p\in [1,+\infty]$, which consists of Bochner measurable $p$-integrable functions with values in $X$. The boldface letter $\mathbf{X}$ denotes the space for vector (or matrix) valued functions. We shall make use the Frobenius inner product $M_1 : M_2 = \mathrm{trace}(M_1^\mathrm{T}M_2)$ for two arbitrary $3\times 3$ matrices $M_1, M_2$ and the Frobenius norm $|M|^2=M:M$ for $M\in \mathbb{R}^{3\times 3}$.

  Throughout this paper, we assume that $\Omega \subset\mathbb{R}^3$ is a bounded domain with sufficiently smooth boundary $\partial\Omega$. For the standard Lebesgue and Sobolev spaces, we use the notations $L^{p} := L^{p}(\Omega)$ and $W^{k,p} := W^{k,p}(\Omega)$ for any $p \in [1,+\infty]$, $k > 0$ equipped with the norms $\|\cdot\|_{L^{p}}$ and $\|\cdot\|_{W^{k,p}}$.  In the case $p = 2$ we denote $H^{k} := W^{k,2}$ and its norm by $\|\cdot\|_{H^{k}}$.
  For non integer values of $s$, $s=\theta k+(1-
  \theta)(k+1)$ for some $\theta\in (0,1)$, $W^{s,p}(\Omega)$ denotes the (complex) interpolation space $[W^{k,p}(\Omega),W^{k+1,p}(\Omega)]_\theta$.
  The norm and inner product on $L^{2}(\Omega)$ are simply denoted by $\|\cdot\|$ and $(\cdot,\cdot)$, respectively. We recall here some useful interpolation inequalities in three dimensions that will be frequently used in the subsequent analysis (see e.g., \cite{Te,Zh}).
\begin{lemma}
Suppose that $\Omega\in \mathbb{R}^3$ is a bounded domain with smooth boundary.
\begin{itemize}
\item[$(1)$] Ladyzhenskaya's inequality. $\|f\|_{L^4}\leq C\|f\|^\frac14\|f\|_{H^1}^\frac34$, for any $f\in H^1(\Omega)$.
\item[$(2)$] Agmon's inequality. $\|f\|_{L^\infty}\leq C\|f\|_{H^1}^\frac12\|f\|_{H^2}^\frac12$, for any $f\in H^2(\Omega)$.
\item[$(3)$] Gagliardo--Nirenberg inequality. Let $j, m$ be arbitrary integers satisfying $0\leq j< m$ and let $1\leq q, r\leq +\infty$, $\frac{j}{m}\leq a\leq 1$ such that
\[
\frac{1}{p}-\frac{j}{3}=a\left(\frac{1}{r}-\frac{m}{3}\right)+(1-a)\frac{1}{q}.
\]
 For any $f\in W^{m,r}(\Omega)\cap L^q(\Omega)$, there are two constants $c_1, c_2$ independent of $f$ such that
\[
\|\partial^jf\|_{L^p}\leq c_1\|\partial^m f\|_{L^r}^a\|f\|_{L^q}^{1-a}+c_2\|f\|_{L^q},
\]
with the following exception: if $1<r<+\infty$ and $m-j-\frac{3}{r}$ is a nonnegative integer, then the above inequality holds only for $\frac{j}{m}\leq a<1$.
\end{itemize}
\end{lemma}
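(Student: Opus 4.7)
The plan is to deduce all three inequalities from the general Gagliardo--Nirenberg inequality stated in item $(3)$, since both Ladyzhenskaya's inequality and Agmon's inequality are particular instances of it, possibly combined with Sobolev embedding. The substantive work therefore lies in establishing the Gagliardo--Nirenberg inequality itself on the bounded domain $\Omega\subset\mathbb{R}^3$, which is a classical result that can be recovered from standard references such as \cite{Te,Zh}.

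First I would prove the Gagliardo--Nirenberg inequality on the whole space $\mathbb{R}^3$. The core idea is to combine the Sobolev embedding $W^{m,r}(\mathbb{R}^3)\hookrightarrow L^{r^*}$ with an interpolation identity in scaling exponents; equivalently, one may interpolate via H\"older's inequality between $L^q$ and $W^{m,r}$ after a Littlewood--Paley or Fourier multiplier decomposition to split the low- and high-frequency parts. The dimensional balance in the exponents $p,q,r,j,m$ is dictated precisely by scale invariance of the homogeneous estimate on $\mathbb{R}^3$, which forces the stated formula for the interpolation parameter $a$. The exceptional case $1<r<+\infty$ with $m-j-3/r$ a nonnegative integer requires replacing the BMO/VMO endpoint with the strict range $j/m\leq a<1$, as in the classical proof. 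To pass from $\mathbb{R}^3$ to the bounded domain $\Omega$ with smooth boundary $\partial\Omega$, I would invoke the existence of a bounded linear extension operator $E:W^{m,r}(\Omega)\cap L^q(\Omega)\to W^{m,r}(\mathbb{R}^3)\cap L^q(\mathbb{R}^3)$ provided by Stein's extension theorem, which is applicable thanks to the smoothness of $\partial\Omega$; the additive lower-order term $c_2\|f\|_{L^q}$ accounts for the control of $\|Ef\|_{L^q(\mathbb{R}^3)}$ by $\|f\|_{L^q(\Omega)}$ up to a multiplicative constant depending on $E$.

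Once the Gagliardo--Nirenberg inequality is available, Ladyzhenskaya's inequality corresponds to the choice $j=0$, $m=1$, $p=4$, $r=q=2$, which yields $a=3/4$ in three dimensions; the lower-order remainder is then absorbed into $\|f\|_{H^1}^{3/4}$ by using $\|f\|\leq\|f\|_{H^1}$ together with elementary inequalities. Agmon's inequality is obtained in two steps: first apply Gagliardo--Nirenberg with $j=0$, $m=2$, $p=+\infty$, $r=2$, $q=6$ to produce
\begin{equation*}
\|f\|_{L^\infty}\leq c_1\|f\|_{H^2}^{1/2}\|f\|_{L^6}^{1/2}+c_2\|f\|_{L^6},
\end{equation*}
and then use the Sobolev embedding $H^1(\Omega)\hookrightarrow L^6(\Omega)$, valid in three dimensions, to control $\|f\|_{L^6}$ by $\|f\|_{H^1}$, followed by a straightforward rearrangement using $\|f\|_{H^1}\leq\|f\|_{H^2}$. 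The main technical obstacle in the whole argument is the construction of the Gagliardo--Nirenberg inequality itself, in particular the careful handling of the boundary regularity required for the extension operator and the verification that the exceptional degenerate case plays no role in the specific parameter choices used subsequently in the paper.
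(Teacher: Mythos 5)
The paper does not prove this lemma; it simply recalls these three classical interpolation inequalities and points to the references \cite{Te,Zh}. Your outline is the standard derivation that those references contain: Gagliardo--Nirenberg on $\mathbb{R}^3$ via Sobolev embedding and interpolation, transferred to $\Omega$ by Stein's extension operator (which is the source of the additive term $c_2\|f\|_{L^q}$), with Ladyzhenskaya and Agmon recovered as special cases. Your exponent computations check out: $j=0$, $m=1$, $p=4$, $r=q=2$ gives $a=\tfrac34$ and the remainder is absorbed via $\|f\|\leq \|f\|^{1/4}\|f\|_{H^1}^{3/4}$; for Agmon, $j=0$, $m=2$, $p=+\infty$, $r=2$, $q=6$ gives $a=\tfrac12$ (and $m-j-\tfrac{3}{r}=\tfrac12$ is not an integer, so the exceptional case is avoided), after which $H^1(\Omega)\hookrightarrow L^6(\Omega)$ and $\|f\|_{H^1}\leq \|f\|_{H^1}^{1/2}\|f\|_{H^2}^{1/2}$ finish the argument. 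This is a correct and entirely standard proof of a result the paper takes as known.
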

\noindent We also report the following estimate on the product of functions in three dimensions:
$$
\|fg\|_{H^m}\leq C(\|f\|_{L^\infty}\|g\|_{H^m}+\|f\|_{H^m}\|g\|_{L^\infty}),\quad \forall\,f,g\in H^m(\Omega), \ m\geq 2.
$$

For every $f\in (H^1(\Omega))^*$, we denote by $\overline{f}$ its  generalized mean value over $\Omega$ such that
$\overline{f}=|\Omega|^{-1}\langle f,1\rangle_{(H^1)^*,H^1}$; if $f\in L^1(\Omega)$, then $\overline{f}=|\Omega|^{-1}\int_\Omega f \,\mathrm{d}x$.
We introduce the space $L^2_{0}(\Omega):=\{f\in L^2(\Omega):\overline{f} =0\}$ for $L^2$ functions with zero mean. In view of the homogeneous Neumann boundary condition \eqref{boundary}, we also set
\begin{align*}
&H^2_{N}(\Omega):=\{f\in H^2(\Omega):\,\partial_{\mathbf{n}}f=0 \ \textrm{on}\  \partial \Omega\},\\
&H^4_{N}(\Omega):=\{f\in H^4(\Omega):\,\partial_{\mathbf{n}}f=\partial_{\mathbf{n}}\Delta f=0 \ \textrm{on}\  \partial \Omega\}.
\end{align*}
Consider the realization of the minus Laplacian with homogeneous Neumann boundary condition
$\mathcal{A}_N\in \mathcal{L}(H^1(\Omega),(H^1(\Omega))^*)$ defined by
\begin{equation}\nonumber
   \langle \mathcal{A}_N u,v\rangle_{(H^1)^*,H^1} := \int_\Omega \nabla u\cdot \nabla v \, \mathrm{d}x,\quad \text{for }\,u,v\in H^1(\Omega).
\end{equation}
It easily follows that for the linear spaces
$$
V_0:=\{ u \in H^1(\Omega):\ \overline{u}=0\}, \quad
V_0^*:= \{ u \in (H^1(\Omega))^*:\ \overline{u}=0 \},
$$
the restriction of $\mathcal{A}_N$ from $V_0$ onto $V_0^*$
is an isomorphism. Moreover, $\mathcal{A}_N$ is positively defined on $V_0$ and self-adjoint. We denote its inverse map by $\mathcal{N} =\mathcal{A}_N^{-1}: V_0^*
\to V_0$. Note that for every $f\in V_0^*$, $u= \mathcal{N} f \in V_0$ is the unique weak solution of the Neumann problem
$$
\begin{cases}
-\Delta u=f, \quad \text{in} \ \Omega,\\
\partial_{\mathbf{n}} u=0, \quad \ \  \text{on}\ \partial \Omega.
\end{cases}
$$
Besides, it holds
\begin{align*}
&\langle \mathcal{A}_N u, \mathcal{N} g\rangle_{V^*_0,V_0} =\langle  g,u\rangle_{(H^1)^*,H^1}, && \forall\, u\in V, \ \forall\, g\in V_0^*,\\
&\langle  g, \mathcal{N} f\rangle_{V_0^*,V_0}
=\langle f, \mathcal{N} g\rangle_{V_0^*,V_0} = \int_{\Omega} \nabla(\mathcal{N} g)
\cdot \nabla (\mathcal{N} f) \, \mathrm{d}x, && \forall \, g,f \in V_0^*,
\end{align*}
and the chain rule
\begin{align}
&\langle \partial_t u, \mathcal{N} u\rangle_{V_0^*,V_0}=\frac{1}{2}\frac{\mathrm{d}}{\mathrm{d}t}\|\nabla \mathcal{N} u\|^2,\ \ \textrm{a.e. in}\ (0,T),\nonumber
\end{align}
for any $u\in H^1(0,T; V_0^*)$. For any $f\in V_0^*$, we set $\|f\|_{V_0^*}=\|\nabla \mathcal{N} f\|$.
It is easy to check that $f \to \|f\|_{V_0^*}$ and $
f \to(\|f-\overline{f}\|_{V_0^*}^2+|\overline{f}|^2)^\frac12$ are equivalent norms on $V_0'$ and $(H^1(\Omega))^*$, respectively.
We recall the well known Poincar\'{e}--Wirtinger inequality:
\begin{equation*}
\|f-\overline{f}\|\leq C_P\|\nabla f\|,\quad \forall\,
f\in H^1(\Omega),
\end{equation*}
where $C_P$ is a constant depending only on $\Omega$. Then it easily follows that $f\to \|\nabla f\|$,  $f\to (\|\nabla f\|^2+|\overline{f}|^2)^\frac12$ are equivalent norms on $V_0$ and $H^1(\Omega)$, respectively. Moreover, we have the following estimates
\begin{align*}
&\|f\|\leq C\|f\|_{V_0^*}^\frac12\|\nabla f\|^\frac12, &&\forall\, f\in V_0,\\
&\|\mathcal{N}f\|_{H^{m+2}}\leq C\|f\|_{H^m}, &&\forall\, f\in H^m(\Omega)\cap L^2_0(\Omega),\ m\geq 0,
\end{align*}
where $C>0$ only depends on $\Omega$.

Concerning the Navier--Stokes equations, we introduce the following spaces (see e.g., \cite{G,Te})
\begin{align*}
	& \mathbf{H}_\sigma := \{ \mathbf{u} \in \mathbf{L}^2(\Omega): \nabla \cdot \mathbf{u} = 0 \text{ in } \Omega, \: \mathbf{u} \cdot \mathbf{n} = 0 \text{ on } \partial\Omega\}, \\
&  \mathbf{V}_\sigma := \{ \mathbf{u} \in \mathbf{H}^1(\Omega): \nabla \cdot \mathbf{u} = 0 \text{ in } \Omega, \: \mathbf{u} = 0 \text{ on } \partial\Omega\},
\end{align*}
endowing the former with the same Hilbert structure as $\mathbf{L}^2(\Omega)$, whereas for the latter
	\[
	(\mathbf{u}, \mathbf{v})_{\mathbf{V}_\sigma}:=\int_\Omega \nabla \mathbf{u} : \nabla \mathbf{v}\,\mathrm{d} x, \qquad \| \mathbf{u} \|_{\mathbf{V}_\sigma}:= (\nabla \mathbf{u}, \nabla \mathbf{u})^\frac12.
	\]
We report the well known Korn's inequality
$$
\|\nabla \uu\|\leq \sqrt{2}\|D\uu\|,\quad \forall\, \uu\in\mathbf{V}_\sigma,
$$
where $D\uu=\frac{1}{2}(\nabla \uu+ \nabla^\mathrm{T}\uu)$. Next, we introduce the Stokes operator $\mathbf{A}:  \mathbf{V}_\sigma \to \mathbf{V}_\sigma^*$, which is the Riesz isomorphism between $\mathbf{V}_\sigma$ and its topological dual $\mathbf{V}_\sigma^*$, that is,	
    \[
\langle \mathbf{A}\uu, \mathbf{v}\rangle_{\mathbf{V}_\sigma^*,\mathbf{V}_\sigma } = \int_\Omega \nabla \uu : \nabla \mathbf{v}\,\mathrm{d}x.
    \]
 The inverse of $\mathbf{A}$ is denoted by $\mathbf{A}^{-1}$. In a similar fashion to what has been carried out for the operator $\mathcal{A}_N$, we can define the equivalent norm
$\| \mathbf{u}\|_\sharp := \| \nabla \mathbf{A}^{-1}\mathbf u \|$ in $\mathbf{V}_\sigma^*$. Besides, it holds the chain rule
\begin{equation*}
\langle\partial_t\mathbf{f}(t),\mathbf{A}^{-1}\mathbf{f}(t)\rangle_{\mathbf{V}_\sigma^*,\mathbf{V}_\sigma } =\frac{1}{2}\frac{\mathrm{d}}{\mathrm{d}t}\|\nabla\mathbf{A}^{-1}\mathbf{f}\|^2,\quad   \textrm{for a.a.}\ t \in (0,T),\ \ \forall\,\mathbf{f}\in H^1(0,T;\mathbf{V}_\sigma^*).\nonumber
\end{equation*}
Let us introduce the space $\mathbf{W}_\sigma := \mathbf{H}^2(\Omega) \cap \mathbf{V}_\sigma$. One can check that the norm $\|\uu\|_{\textbf{W}_\sigma} := \|\mathbf{A}\uu\|$ is equivalent to the standard $\mathbf{H}^2$-norm in $\mathbf{W}_\sigma$ (see \cite{G}). Moreover, we recall the following regularity result for the Stokes operator (see e.g.,  \cite[Appendix B]{GMT18}):
\begin{lemma}\label{stokes}
Let $\Omega\subset\mathbb{R}^3$ be a bounded domain with smooth boundary. For any $\mathbf{f} \in \mathbf{H}_{\sigma}$,
there exists a unique $\mathbf{u}\in \mathbf{W}_{\sigma}$ and $P\in H^1(\Omega)\cap L_0^2(\Omega)$ such that $-\Delta \mathbf{u}+\nabla P=\mathbf{f}$ a.e. in $\Omega$, that is, $\mathbf{u}=\mathbf{A}^{-1}\mathbf{f}$. Furthermore, we have
\begin{align*}
&\|\mathbf{u}\|_{\mathbf{H}^2}+\|\nabla P\|\le C\|\mathbf{f}\|,
\\
& \|P\|\le C \|\mathbf{f}\|^\frac12\|\nabla \mathbf{A}^{-1}\mathbf{f}\|^\frac12,
 \end{align*}
where $C$ is a positive constant that may depend on $\Omega$ but is independent of $\mathbf{f}$.
\end{lemma}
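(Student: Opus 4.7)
The plan splits into three stages, combining classical techniques for the stationary Stokes system.

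First, for existence and uniqueness, I would apply the Lax--Milgram lemma to the continuous and coercive bilinear form $a(\mathbf{u},\mathbf{v})=(\nabla\mathbf{u},\nabla\mathbf{v})$ on the Hilbert space $\mathbf{V}_\sigma$ (coercivity from Poincar\'{e}'s inequality on $\mathbf{V}_\sigma$); this yields the unique weak velocity $\mathbf{u}=\mathbf{A}^{-1}\mathbf{f}\in\mathbf{V}_\sigma$ satisfying $(\nabla\mathbf{u},\nabla\mathbf{v})=(\mathbf{f},\mathbf{v})$ for all $\mathbf{v}\in\mathbf{V}_\sigma$. To recover the pressure, I would apply de Rham's theorem to the distribution $\mathbf{f}+\Delta\mathbf{u}$: it annihilates all smooth, divergence-free, compactly supported test fields, so there exists a unique $P\in L_0^2(\Omega)$ (fixed by the zero-mean normalization) with $-\Delta\mathbf{u}+\nabla P=\mathbf{f}$ in $\mathcal{D}'(\Omega)$.

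Second, for the regularity $(\mathbf{u},P)\in\mathbf{H}^2\times H^1$ and the bound $\|\mathbf{u}\|_{\mathbf{H}^2}+\|\nabla P\|\leq C\|\mathbf{f}\|$, I would invoke the classical Cattabriga--Solonnikov $L^2$-theory for the stationary Stokes system on a smooth bounded domain: a partition of unity, local flattening of $\partial\Omega$, and Calder\'{o}n--Zygmund estimates yield full second-order regularity with the stated bound.

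Third, for the sharp multiplicative pressure estimate, I would use the Bogovskii operator $\mathcal{B}:L_0^2(\Omega)\to\mathbf{H}_0^1(\Omega)$ to lift $P$ to $\mathbf{w}=\mathcal{B}(P)$ satisfying $\nabla\cdot\mathbf{w}=P$ and $\|\mathbf{w}\|_{\mathbf{H}^1}\leq C\|P\|$. Testing the Stokes equation against $\mathbf{w}$ and integrating by parts yields
\[
\|P\|^2=(\nabla\mathbf{u},\nabla\mathbf{w})-(\mathbf{f},\mathbf{w}),
\]
whose first summand is directly bounded by $C\|\nabla\mathbf{A}^{-1}\mathbf{f}\|\|P\|$. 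For the second, the key observation is that $\mathbf{f}\in\mathbf{H}_\sigma$ is divergence-free with vanishing normal trace on $\partial\Omega$, so the Helmholtz decomposition $\mathbf{w}=\mathbb{P}\mathbf{w}+\nabla\phi$ (with $\mathbb{P}$ the Leray projector onto $\mathbf{H}_\sigma$) reduces $(\mathbf{f},\mathbf{w})$ to $(\mathbf{f},\mathbb{P}\mathbf{w})$; combined with a refined Bogovskii bound $\|\mathcal{B}(q)\|_{L^2}\leq C\|q\|_{H^{-1}}$ and the interpolation inequality $\|P\|_{L^2}^2\leq C\|P\|_{H^1}\|P\|_{H^{-1}}$ (with $\|P\|_{H^1}\leq C\|\mathbf{f}\|$ from the previous stage and $\|P\|_{H^{-1}}\leq C\|\nabla\mathbf{A}^{-1}\mathbf{f}\|$ obtained by dualizing the equation against $H^1_0$ test functions), a Young absorption delivers the desired estimate.

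The hardest part is the third stage: a direct Cauchy--Schwarz on $(\mathbf{f},\mathbf{w})$ produces only the weaker additive bound $\|P\|\leq C(\|\mathbf{f}\|+\|\nabla\mathbf{A}^{-1}\mathbf{f}\|)$, and extracting the sharp multiplicative form requires exploiting the divergence-free constraint of $\mathbf{f}$ in tandem with negative-Sobolev Bogovskii estimates (or, equivalently, a duality argument using the adjoint Stokes system), as detailed in \cite[Appendix B]{GMT18}.
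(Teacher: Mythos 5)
First, a point of comparison: the paper does not prove this lemma at all --- it is recalled verbatim with a pointer to \cite[Appendix B]{GMT18}, so there is no in-paper argument to measure yours against. Your first two stages are standard and correct: Lax--Milgram on $\mathbf{V}_\sigma$ plus de Rham/Ne\v{c}as for the pressure gives existence and uniqueness, and the Cattabriga--Solonnikov $L^2$-theory gives $\|\mathbf{u}\|_{\mathbf{H}^2}+\|\nabla P\|\le C\|\mathbf{f}\|$.

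The gap is in your third stage, exactly where you locate the difficulty. Your argument hinges on the intermediate claim $\|P\|_{H^{-1}}\le C\|\nabla \mathbf{A}^{-1}\mathbf{f}\|$, which you say is ``obtained by dualizing the equation against $H^1_0$ test functions.'' But dualizing reproduces the very term you are trying to control: for an $\mathbf{H}^1_0$ lift $\mathbf{v}$ of a scalar test function one gets $(P,\nabla\cdot\mathbf{v})=(\nabla\mathbf{u},\nabla\mathbf{v})-(\mathbf{f},\mathbf{v})$, and $(\mathbf{f},\mathbf{v})$ is only $O(\|\mathbf{f}\|\,\|\mathbf{v}\|)$, so the reasoning is circular and yields nothing beyond the additive bound you already dismissed. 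The way to close the argument is to abandon $\mathbf{H}^1_0$ lifts and test with a pure gradient: solve $-\Delta\phi=P$ with $\partial_{\mathbf{n}}\phi=0$ and set $\mathbf{v}=-\nabla\phi$, so that $\nabla\cdot\mathbf{v}=P$, $\mathbf{v}\cdot\mathbf{n}=0$ on $\partial\Omega$ and $\|\mathbf{v}\|_{\mathbf{H}^1}\le C\|P\|$. Then $(\mathbf{f},\mathbf{v})=0$ exactly, by the Helmholtz orthogonality of $\mathbf{H}_\sigma$ against gradients, and
\begin{equation*}
\|P\|^2=(P,\nabla\cdot\mathbf{v})=-(\Delta\mathbf{u},\mathbf{v})=(\nabla\mathbf{u},\nabla\mathbf{v})-\int_{\partial\Omega}(\partial_{\mathbf{n}}\mathbf{u})\cdot\mathbf{v}\,\mathrm{d}S.
\end{equation*}
The volume term is $\le C\|\nabla\mathbf{A}^{-1}\mathbf{f}\|\,\|P\|$ and is handled as you indicate (using $\|\nabla\mathbf{A}^{-1}\mathbf{f}\|\le C\|\mathbf{f}\|$ to convert it to the product form). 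The boundary integral is the genuine obstruction and is precisely where the $\mathbf{H}^2$-norm, hence $\|\mathbf{f}\|$, must enter: by the trace interpolation $\|\nabla\mathbf{u}\|_{\mathbf{L}^2(\partial\Omega)}\le C\|\nabla\mathbf{u}\|^{1/2}\|\mathbf{u}\|_{\mathbf{H}^2}^{1/2}$ and your stage-two bound, it is controlled by $C\|\nabla\mathbf{A}^{-1}\mathbf{f}\|^{1/2}\|\mathbf{f}\|^{1/2}\|P\|$, which delivers the stated estimate. Your proposal names plausible ingredients (Leray projection, negative-order Bogovskii bounds) but does not assemble them into an argument that actually closes.
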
	
\noindent

\subsection{Statement of main results} \label{sec:main}
\setcounter{equation}{0}

The energy of a biomembrane with fluids both inside and outside is
composed of the kinetic energy of the fluid and the elastic free energy.
Thus, let us define the total energy of problem \eqref{NS}--\eqref{IC} by
\begin{align}
\mathcal{E}(\uu(t),\phi(t)) = \frac12\|\uu(t)\|^2+ E(\phi(t)),\quad \forall\, t\geq0,
\label{totalE}
\end{align}
where $E(\phi)$ is given by \eqref{defiE}. Throughout this paper, we assume that $\eta\in \mathbb{R}$ is a given constant. Although the sign of $\eta$ does not play a role in the subsequent analysis, it is important from the point of view of modeling and application. When $\eta=0$, the energy $E(\phi)$ reduces to the approximate Willmore functional $E_\varepsilon(\phi)$ for the Canham--Helfrich bending energy $E_\text{elastic}(\phi)$, while for $\eta>0$, $E(\phi)$ is related to the Willmore regularization of the Cahn--Hilliard energy $E_{\text{CH}}$, which was introduced in \cite{TLVW09} to investigate strong anisotropy effects (corresponding to some non-constant coefficient $\eta$) arising during the growth and coarsening of thin films.

Next, we impose the following basic assumptions on nonlinear functions:
\begin{itemize}
\item[(A1)] $\nu\in C^2(\mathbb{R})$. There exists $\nu_*>0$ such that $\nu(s)\geq \nu_*$ for all $s\in\mathbb{R}$.
\item[(A2)] $m\in C^2(\mathbb{R})$. There exists $m_*>0$ such that $m(s)\geq m_*$, for all $s\in\mathbb{R}$.
\item[(A3)] The functions $F$ and $f$ take the following form:
    $$
    F(s)=\frac14(s^2-1)^2,\quad f(s)=F'(s)=s^3-s,\quad \forall\, s\in \mathbb{R}.
    $$
\end{itemize}
\begin{remark}\label{rem:Ff}
Thanks to the Sobolev embedding theorem $H^2(\Omega)\hookrightarrow L^\infty(\Omega)$ in three dimensions, we do not need to assume upper bounds for $\nu$ and $m$. Besides, more general form of $F$ (and thus $f$) can be treated with minor modifications in the subsequent proofs, at least from the mathematical point of view. For instance, we can easily extend the results to the case with a general polynomial $F(s)=\sum_{j=1}^{2p} a_js^j$, where $a_{2p}>0$, $p\in \mathbb{N}$ and $p\geq 2$. See also \cite{DLP19} for some further assumptions on admissible potential functions.
\end{remark}
We introduce the notion of weak solution (or, the so-called finite energy solution).
\begin{definition}\label{def:solution}
For any initial data $(\uu_0, \phi_0)\in \mathbf{H}_\sigma \times H^2_N(\Omega)$, a set of functions $( \uu, \phi, \mu, \omega)$ is called
a global weak solution to problem \eqref{NS}--\eqref{IC} on $[0,+\infty)$, if
\bea
&&\uu \in L^{\infty}(0, +\infty; \mathbf{H}_\sigma ) \cap L^2(0, +\infty;
\mathbf{V}_\sigma)\cap W^{1,\frac43}_{\mathrm{loc}}(0,+\infty; \mathbf{V}_\sigma^*), \label{reg-u}\\
&&\phi \in L^\infty(0, +\infty; H^2_N(\Omega))\cap L^2_{\mathrm{loc}}(0, +\infty; H^5(\Omega)\cap H^4_N(\Omega))\cap H^1_{\mathrm{loc}}(0, +\infty; (H^1(\Omega))^*), \label{reg-phi}\\
&& \mu\in L^2_{\mathrm{loc}}(0,+\infty; H^1(\Omega)),\label{reg-mu}\\
&& \omega \in L^\infty(0,+\infty; L^2(\Omega))\cap L^2_{\mathrm{loc}}(0,+\infty; H^3(\Omega)\cap H^2_N(\Omega)),\label{reg-ome}
\eea
satisfying the weak formulation
\begin{align}
& \langle \partial_t \uu, \mathbf{v} \rangle_{\mathbf{V}_\sigma^*, \mathbf{V}_\sigma}
+((\uu\cdot\nabla)\uu,\mathbf{v})
+(2\nu(\phi)D \uu,\nabla \mathbf{v})
=(\mu\nabla \phi, \mathbf{v}),\qquad \forall\, \mathbf{v}\in \mathbf{V}_\sigma,\label{weak1}\\
& \langle \partial_t \phi, \psi \rangle_{(H^1)^*,H^1}
+ (\uu \cdot \nabla  \phi,\psi)
+ (m(\phi)\nabla \mu, \nabla \psi)=0,\qquad\qquad  \forall\, \psi\in H^1(\Omega),
\label{weak2}
\end{align}
for a.a. $t\in (0,+\infty)$,
\begin{align}
&\mu=-\Delta \omega +f'(\phi)\omega+\eta\omega,\qquad \text{a.e. in }\Omega\times (0,+\infty),\label{weak3}\\
&\omega=-\Delta \phi+f(\phi), \qquad\quad\ \qquad  \text{a.e. in }\Omega\times (0,+\infty),\label{weak4}
\end{align}
the initial conditions \eqref{IC}, as well as the strong energy inequality
\be \mathcal{E}(\uu(t),\phi(t)) +\int_s^{t}\int_\Omega \big(2\nu(\phi(\tau))|D \uu(\tau)|^2 + m(\phi(\tau))|\nabla \mu(\tau)|^2\big)\,\mathrm{d}x\mathrm{d}\tau \leq
\mathcal{E}(\uu(s),\phi(s)),
\label{lowene}
\ee
for almost all $s\geq 0$, including $s=0$ and all $t\geq s$, where $\mathcal{E}(\uu(t),\phi(t))$ is given by \eqref{totalE}.
\end{definition}
\begin{remark}
\label{rem:reg1}
In Definition \ref{def:solution}, the initial data are attained in the following sense. Thanks to the regularity of $\phi$ and its time derivative, we infer that for any $T>0$, $\phi \in C([0,T];H^2(\Omega))$. Analogously, from the regularity of $\uu$ and its time derivative, it follows that $\uu \in C_w([0,T]; \mathbf{H}_\sigma)$, where the subscript $w$ means the weak continuity (in time).
\end{remark}

First, we prove the existence of global weak solutions.

\begin{theorem}[Existence]\label{thm:weak}
Assume that $\Omega\subset \mathbb{R}^3$ is a bounded domain of class $C^5$ and (A1)--(A3) are satisfied. For any initial datum $(\uu_0, \phi_0)\in \mathbf{H}_\sigma \times H^2_N(\Omega)$, problem \eqref{NS}--\eqref{IC} admits at least one global weak solution $(\uu, \phi,\mu,\omega)$ in the sense of Definition \ref{def:solution}.
\end{theorem}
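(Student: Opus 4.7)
The plan is to construct global weak solutions via a semi-Galerkin (or Faedo--Galerkin) approximation scheme that respects the energy dissipation structure inherited from \eqref{basic energy law}, then pass to the limit using Aubin--Lions type compactness. For the approximation, I would take $\{\mathbf{w}_i\}_{i\geq 1}$ to be the eigenfunctions of the Stokes operator $\mathbf{A}$ as a basis in $\mathbf{V}_\sigma$, and $\{y_j\}_{j\geq 1}$ to be the eigenfunctions of $\mathcal{A}_N$ (normalized in $L^2$), which automatically lie in $H^2_N(\Omega)$ and, thanks to the smoothness of $\partial\Omega$, in $H^4_N(\Omega)$ after a bootstrap using elliptic regularity. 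Denoting the Galerkin projectors by $\mathbf{P}_n$ and $\Pi_n$, I would look for approximate solutions $\uu_n=\sum_{i=1}^n a_i^n(t)\mathbf{w}_i$ and $\phi_n=\sum_{j=1}^n b_j^n(t)y_j$ solving the projected system, with $\omega_n=-\Delta\phi_n+\Pi_n f(\phi_n)$ and $\mu_n$ defined correspondingly through \eqref{weak3}. Local existence of $(a_i^n,b_j^n)$ is a standard ODE argument via Cauchy--Lipschitz; the global existence will follow from the a priori bounds below.

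The first key step is to reproduce the energy identity at the Galerkin level: testing the projected momentum equation against $\uu_n$ and the projected Cahn--Hilliard equation against $\mu_n$ (and using that $\Pi_n$ commutes with $\mathcal{A}_N$, so that the cross-terms between $\partial_t\phi_n$ and $\omega_n$ telescope correctly), together with $(\mu_n\nabla\phi_n,\uu_n)=(\uu_n\cdot\nabla\phi_n,\mu_n)$ from incompressibility, yields
\begin{equation*}
\frac{\mathrm{d}}{\mathrm{d}t}\mathcal{E}(\uu_n,\phi_n)+\int_\Omega 2\nu(\phi_n)|D\uu_n|^2\,\mathrm{d}x+\int_\Omega m(\phi_n)|\nabla\mu_n|^2\,\mathrm{d}x=0.
\end{equation*}
Using (A1)--(A2) and the coercivity of $E$ (which follows because $\|\omega_n\|^2$ controls $\|\Delta\phi_n\|^2$ after absorbing the $|\nabla\phi_n|^2$ contribution from expanding $(-\Delta\phi+f(\phi))^2$ and invoking $F\geq 0$, Poincar\'e--Wirtinger and the conservation of $\overline{\phi_n}$), this provides uniform bounds: $\uu_n\in L^\infty(0,T;\mathbf{H}_\sigma)\cap L^2(0,T;\mathbf{V}_\sigma)$, $\phi_n\in L^\infty(0,T;H^2_N(\Omega))$, $\omega_n\in L^\infty(0,T;L^2(\Omega))$, and $\nabla\mu_n\in L^2(0,T;\mathbf{L}^2)$. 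Control of $\overline{\mu_n}$ via testing \eqref{weak3} against $1$ and using $\phi_n\in L^\infty_t L^6_x$ then upgrades the last bound to $\mu_n\in L^2(0,T;H^1)$. Standard Sobolev embeddings yield $\partial_t\uu_n\in L^{4/3}(0,T;\mathbf{V}_\sigma^*)$ (bounding the convective and capillary terms by $\|\uu_n\|_{\mathbf{L}^3}\|\nabla\uu_n\|$ and $\|\mu_n\|_{L^6}\|\nabla\phi_n\|$), and $\partial_t\phi_n\in L^2(0,T;(H^1)^*)$.

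The next step is to upgrade the spatial regularity to the $H^5$-level needed in \eqref{reg-phi}--\eqref{reg-ome}. Since $\partial_\mathbf{n}\omega_n=0$ on $\partial\Omega$ (which follows from the boundary conditions $\partial_\mathbf{n}\phi_n=\partial_\mathbf{n}\Delta\phi_n=0$ satisfied by the basis $\{y_j\}$), rewriting \eqref{weak3} as $-\Delta\omega_n=\mu_n-(f'(\phi_n)+\eta)\omega_n$ and applying elliptic regularity with $\mu_n\in L^2_t H^1$ and $f'(\phi_n)\omega_n\in L^2_t H^1$ (using $\phi_n\in L^\infty_t H^2\hookrightarrow L^\infty_{t,x}$) gives $\omega_n\in L^2(0,T;H^3\cap H^2_N)$. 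Then inserting into $-\Delta\phi_n=\omega_n-f(\phi_n)$ and bootstrapping yields $\phi_n\in L^2(0,T;H^5\cap H^4_N)$. These bounds are uniform in $n$ and compatible with the regularity class required in Definition \ref{def:solution}.

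The main obstacle is the passage to the limit in the nonlinear couplings, in particular the capillary force $\mu_n\nabla\phi_n$ and the nonlinear terms $f(\phi_n)$, $f'(\phi_n)\omega_n$, $\nu(\phi_n)D\uu_n$, $m(\phi_n)\nabla\mu_n$. Aubin--Lions applied to the bounds above yields strong convergence $\uu_n\to\uu$ in $L^2(0,T;\mathbf{H}_\sigma)$ and $\phi_n\to\phi$ in $C([0,T];H^{2-\epsilon})\cap L^2(0,T;H^{5-\epsilon})$, which (combined with pointwise a.e. convergence of $\phi_n$ and the continuity of $\nu, m, f, f'$) is sufficient to identify each nonlinearity in the distributional limit of \eqref{weak1}--\eqref{weak4}; the sixth-order structure is handled because $\nabla\mu_n\rightharpoonup\nabla\mu$ weakly in $L^2$ paired against the strongly convergent $\nabla\phi_n$ in $L^2_tL^\infty_x$ suffices for the force term. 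Initial data attainment $\uu(0)=\uu_0$, $\phi(0)=\phi_0$ follows from the weak/strong continuity in time noted in Remark \ref{rem:reg1}. Finally, the strong energy inequality \eqref{lowene} is obtained by writing the Galerkin energy identity in integrated form, passing to the $\liminf$ on the left-hand side using weak lower semicontinuity of the dissipation integrals (and Fatou-type arguments with $\nu(\phi_n)\to\nu(\phi)$, $m(\phi_n)\to m(\phi)$ a.e.), and passing to the limit on the right-hand side at $s=0$ and at Lebesgue points $s>0$ where $(\uu_n(s),\phi_n(s))\to(\uu(s),\phi(s))$ strongly up to a subsequence.
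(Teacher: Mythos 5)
Your overall strategy coincides with the paper's: a Faedo--Galerkin scheme built on the Stokes eigenfunctions and the Neumann--Laplacian eigenfunctions, the discrete energy identity, conservation of $\overline{\phi_n}$, and Aubin--Lions compactness to pass to the limit, with the strong energy inequality recovered by weak lower semicontinuity. Two steps, however, are not justified as written. First, you extract the uniform $L^\infty_t H^2_x$ bound on $\phi_n$ from the energy identity by ``invoking $F\geq 0$'', which only covers $\eta\geq 0$; for the functionalized Cahn--Hilliard energy the case of interest is $\eta<0$, where the contribution $\eta\int_\Omega\big(\tfrac12|\nabla\phi_n|^2+F(\phi_n)\big)\,\mathrm{d}x$ is negative and of the same order as the good quadratic term. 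One must show $E(\phi_n)$ is bounded below and still controls $\|\omega_n\|^2$: the paper does this via the identity $sf(s)-2F(s)=\tfrac12 s^4-\tfrac12$, rewriting the $\eta$-term as $\tfrac{\eta}{2}\int_\Omega\phi_n\omega_n\,\mathrm{d}x-\tfrac{\eta}{2}\int_\Omega\big(\phi_nf(\phi_n)-2F(\phi_n)\big)\,\mathrm{d}x$ and absorbing it into $\tfrac14\|\omega_n\|^2$ plus the quartic term. Without some such argument the energy bound yields no control on $\|\omega_n\|$ when $\eta<0$ and the rest of your scheme collapses in exactly the physically relevant regime.

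Second, your Galerkin-level upgrade to $\omega_n\in L^2_tH^3$ and $\phi_n\in L^2_tH^5$ relies on the pointwise equation $-\Delta\omega_n=\mu_n-(f'(\phi_n)+\eta)\omega_n$, but at the discrete level this relation holds only after projection onto $W_n$: the scheme gives $\mu_n=\Delta^2\phi_n+\Pi_nL(\phi_n)$ with $L(\phi_n)=-\Delta f(\phi_n)+f'(\phi_n)\omega_n+\eta\omega_n$, so the unprojected elliptic problem for $\omega_n$ is not available. Moreover, your product estimate $f'(\phi_n)\omega_n\in L^2_tH^1$ requires $\omega_n\in L^2_tH^1$ (to handle $f''(\phi_n)\nabla\phi_n\,\omega_n$), which does not follow from $\omega_n\in L^\infty_tL^2$ alone. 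The paper's route avoids both difficulties: it first obtains $\phi_n\in L^2_tH^4$ from the exact bi-Laplacian identity $\Delta^2\phi_n=\mu_n-\Pi_nL(\phi_n)$ together with $\|\Pi_nL(\phi_n)\|\leq\|L(\phi_n)\|\leq C$ (this is also the bound that supplies the compactness $\phi_n\to\phi$ in $L^2_tH^{4-r}$ actually used in the limit passage), and only after passing to the limit --- where \eqref{weak3}--\eqref{weak4} hold a.e.\ without projections --- does it bootstrap $\omega\in L^2_tH^3$ and $\phi\in L^2_tH^5$. You should insert the intermediate $H^4$ estimate and defer the $H^5$ upgrade to the limit functions.
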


Next, we derive some sufficient conditions under which a weak solution is unique in the class of weak solutions.

\begin{theorem}[Conditional uniqueness]\label{thm:weakuni}
Assume that $\Omega\subset \mathbb{R}^3$ is a bounded domain of class $C^5$ and (A1)--(A3) are satisfied. For arbitrary but fixed $T\in (0,+\infty)$, let $(\uu_i,\phi_i)$, $i=1,2$ be two weak solutions to problem \eqref{NS}--\eqref{IC} on $[0,T]$ corresponding to the same initial datum $(\uu_0, \phi_0)\in \mathbf{H}_\sigma \times H^2_N(\Omega)$. If one of the following conditions holds for $\uu_1$:
\begin{itemize}
\item[$\mathrm{(a)}$]  $\uu_1\in L^q(0,T;\mathbf{L}^p(\Omega))$, with $\displaystyle{\frac{2}{q}+\frac{3}{p}=1}$, $3<p\leq +\infty$;
\item[$\mathrm{(b)}$] $\nabla \uu_1\in L^q(0,T;\mathbf{L}^p(\Omega))$ with $\displaystyle{\frac{2}{q}+\frac{3}{p}=2}$, $\frac{3}{2}<p\leq +\infty$;
\item[$\mathrm{(c)}$] $\uu_1\in L^q(0,T;\mathbf{W}^{s,p}(\Omega))$ with $\displaystyle{\frac{2}{q}+\frac{3}{p}=1+s}$, $1<p,q< +\infty$, $s\geq 0$;
\end{itemize}
then $(\uu_1,\phi_1)=(\uu_2,\phi_2)$ a.e. in $\Omega\times [0,T]$.
\end{theorem}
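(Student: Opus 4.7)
Set $\uu := \uu_1-\uu_2$, $\phi := \phi_1-\phi_2$, $\mu := \mu_1-\mu_2$ and $\omega := \omega_1-\omega_2$. Subtracting the two weak formulations yields a system for $(\uu,\phi,\mu,\omega)$ with vanishing initial data and the same homogeneous boundary conditions. Since each $\phi_i$ preserves its spatial mean and both share the same initial datum, $\overline{\phi}(t)\equiv 0$, so $\mathcal{N}\phi\in V_0$ is well defined. I would test the velocity difference equation with $\uu\in \mathbf{V}_\sigma$ and the phase difference equation with $\mathcal{N}\phi\in V_0$, invoke the chain rules from Section~2.1, and use $((\uu_2\cdot\nabla)\uu,\uu)=0$. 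Adding the two resulting identities produces
\[
\tfrac{1}{2}\tfrac{\mathrm{d}}{\mathrm{d}t}\Lambda + 2\!\int_\Omega\!\nu(\phi_1)|D\uu|^2\,\mathrm{d}x + (m(\phi_1)\nabla\mu,\nabla\mathcal{N}\phi) = \mathcal{R},
\]
where $\Lambda(t):=\|\uu(t)\|^2+\|\phi(t)\|_{V_0^*}^2$ and $\mathcal{R}$ collects the convective remainder $-((\uu\cdot\nabla)\uu_1,\uu)$, the variable-viscosity term $-2((\nu(\phi_1)-\nu(\phi_2))D\uu_2,D\uu)$, the variable-mobility term $-((m(\phi_1)-m(\phi_2))\nabla\mu_2,\nabla\mathcal{N}\phi)$, the force couplings $(\mu_1\nabla\phi,\uu)+(\mu\nabla\phi_2,\uu)$, and the transport remainders $-(\uu_1\cdot\nabla\phi+\uu\cdot\nabla\phi_2,\mathcal{N}\phi)$. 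The goal is a differential inequality $\tfrac{\mathrm{d}}{\mathrm{d}t}\Lambda + c_0(\|\nabla\uu\|^2+\|\phi\|_{H^2}^2)\leq g(t)\Lambda$ with $g\in L^1(0,T)$; together with $\Lambda(0)=0$, Gr\"onwall's lemma then forces $\Lambda\equiv 0$.

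Extracting dissipation from the sixth-order operator is the heart of the estimate. Substituting $\mu = -\Delta\omega + f'(\phi_1)\omega + (f'(\phi_1)-f'(\phi_2))\omega_2 + \eta\omega$ and $\omega = -\Delta\phi + f(\phi_1)-f(\phi_2)$ and integrating by parts repeatedly (all boundary contributions vanish by \eqref{boundary}, $\partial_\mathbf{n}\mathcal{N}\phi=0$, and $\partial_\mathbf{n}\omega=-\partial_\mathbf{n}\Delta\phi+f'(\phi)\partial_\mathbf{n}\phi=0$), the leading part reduces to $\int_\Omega m(\phi_1)|\Delta\phi|^2\,\mathrm{d}x \geq m_*\|\Delta\phi\|^2$, which by Neumann elliptic regularity with $\overline{\phi}=0$ is bounded below by a positive multiple of $\|\phi\|_{H^2}^2$. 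The remaining cross-terms are products of $\phi,\nabla\phi,\Delta\phi$ with $\phi_i,\omega_2,\nabla\phi_1$ and can be absorbed into a small fraction of $\|\phi\|_{H^2}^2$ plus $g_1(t)\Lambda$, using $\phi_i\in L^\infty(H^2)$, $\omega_i\in L^2(H^3)$, $\mu_i\in L^2(H^1)$, the embedding $H^2\hookrightarrow L^\infty\cap W^{1,4}$, and the interpolation $\|\phi\|_{L^\infty}\leq C\|\phi\|_{V_0^*}^\alpha\|\phi\|_{H^2}^{1-\alpha}$ for some small $\alpha>0$.

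For $\mathcal{R}$, the convection term is rewritten as $-((\uu\cdot\nabla)\uu_1,\uu)=((\uu\cdot\nabla)\uu,\uu_1)$ via $\nabla\cdot\uu=0$ and estimated by H\"older together with the three-dimensional Gagliardo--Nirenberg interpolation $\|\uu\|_{L^{2p/(p-2)}}\leq C\|\uu\|^{1-3/p}\|\nabla\uu\|^{3/p}$, yielding, after Young, $|R_{\text{conv}}|\leq \varepsilon\|\nabla\uu\|^2+C_\varepsilon\|\uu_1\|_{L^p}^q\|\uu\|^2$ under (a); hypothesis (b) is handled by the parallel bound $|((\uu\cdot\nabla)\uu_1,\uu)|\lesssim \|\nabla\uu_1\|_{L^p}\|\uu\|_{L^{2p/(p-1)}}^2$ with analogous interpolation, and (c) by a duality argument based on the Sobolev embedding $\mathbf{W}^{s,p}\hookrightarrow\mathbf{L}^{3p/(3-sp)}$ together with fractional interpolation. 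The scaling in each of (a)--(c) is exactly what makes the associated function $g(t)$ integrable on $(0,T)$. The force and transport couplings are controlled using $\mu_i\in L^2(L^6)$, $\nabla\phi_i\in L^\infty(L^6)$, $\|\uu\|_{L^3}\leq C\|\uu\|^{1/2}\|\nabla\uu\|^{1/2}$, and $\|\mathcal{N}\phi\|_{H^2}\leq C\|\phi\|$. The variable-viscosity and variable-mobility terms use $|\nu(\phi_1)-\nu(\phi_2)|+|m(\phi_1)-m(\phi_2)|\leq L|\phi|$ together with the interpolation of $\|\phi\|_{L^\infty}$; they are dominated by $\varepsilon(\|\nabla\uu\|^2+\|\phi\|_{H^2}^2)+g_2(t)\Lambda$ with $g_2\in L^1(0,T)$. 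Choosing $\varepsilon$ small and absorbing into the dissipation produces the targeted inequality, and Gr\"onwall concludes.

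The main obstacle is the bookkeeping in the sixth-order unpacking of $(m(\phi_1)\nabla\mu,\nabla\mathcal{N}\phi)$: one must extract a clean $\|\phi\|_{H^2}^2$ coercivity while keeping every variable-coefficient cross-term under quantitative control using only the regularity supplied by the weak formulation. A closely related subtlety is that the factors of $\|D\uu_2\|$ and $\|\nabla\mu_2\|$ produced by the variable viscosity and mobility must be arranged, through an interpolation of $\|\phi\|_{L^\infty}$ between $\|\phi\|_{V_0^*}$ and $\|\phi\|_{H^2}$, to appear at exponent exactly $2$, matching the $L^2(0,T)$ integrability provided only by the basic energy estimate.
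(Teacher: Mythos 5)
Your overall scaffolding (subtract the two weak formulations, build a Gronwall functional, treat the convection term via the Serrin/Beirao--Ribaud interpolations under (a)--(c)) matches the paper's strategy, but your choice of metric for the phase variable --- testing the difference equation for $\phi$ with $\mathcal{N}\phi$ and working with $\Lambda=\|\uu\|^2+\|\phi\|_{V_0^*}^2$, extracting only $m_*\|\Delta\phi\|^2$ as dissipation --- does not close for this sixth-order system, and this is a genuine gap rather than a bookkeeping issue. Two terms are fatal. First, the Korteweg force difference contributes $(\widehat{\mu}\,\nabla\phi_1,\widehat{\uu})$ with $\widehat{\mu}=\Delta^2\widehat{\phi}+G(\phi_1,\phi_2)$, so you must control $\int_\Omega \Delta^2\widehat{\phi}\,\nabla\phi_1\cdot\widehat{\uu}\,\mathrm{d}x$. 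This is a fourth-order quantity in $\widehat{\phi}$; integrating by parts once (using $\partial_{\mathbf{n}}\Delta\widehat{\phi}=0$) leaves $\|\nabla\Delta\widehat{\phi}\|$, a further integration by parts produces the non-vanishing boundary term $\int_{\partial\Omega}\Delta\widehat{\phi}\,\partial_{\mathbf{n}}(\nabla\phi_1\cdot\widehat{\uu})$ and, in the interior, $\Delta\widehat{\uu}$, which a weak solution does not possess. Neither route is dominated by $\|\Delta\widehat{\phi}\|^2$. In the paper this term is never estimated: it cancels \emph{exactly} against $-\int_\Omega(\widehat{\uu}\cdot\nabla\phi_1)\Delta^2\widehat{\phi}\,\mathrm{d}x$ coming from the transport term of the $\phi$-equation tested with $\Delta^2\widehat{\phi}$. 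That cancellation is only available because the $\phi$-equation is paired with the test function $\Delta^2\widehat{\phi}$, not $\mathcal{N}\widehat{\phi}$; with your pairing the transport term produces $(\widehat{\uu}\cdot\nabla\phi_1,\mathcal{N}\widehat{\phi})$, which cancels nothing.

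Second, your claim that $(m(\phi_1)\nabla\widehat{\mu},\nabla\mathcal{N}\widehat{\phi})$ ``reduces to $\int_\Omega m(\phi_1)|\Delta\widehat{\phi}|^2$'' plus controllable cross-terms is only true for constant mobility. With variable $m$, moving the divergence onto $m(\phi_1)\nabla\mathcal{N}\widehat{\phi}$ produces the commutator $(\widehat{\mu},\,m'(\phi_1)\nabla\phi_1\cdot\nabla\mathcal{N}\widehat{\phi})$, which again contains $\Delta^2\widehat{\phi}$ paired against a weight whose normal derivative does not vanish on $\partial\Omega$; you cannot integrate by parts down to $\Delta\widehat{\phi}$ without an uncontrolled boundary term, and $\|\widehat{\phi}\|_{H^3}$ or $\|\widehat{\phi}\|_{H^4}$ cannot be absorbed by the $H^2$-level dissipation (interpolating against $\|\widehat{\phi}\|_{H^5}\in L^2(0,T)$ destroys the integrability of the Gronwall coefficient). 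These are precisely the reasons the paper works one level higher: it uses $\mathcal{H}=\frac12\|\widehat{\uu}\|^2+\frac12\|\Delta\widehat{\phi}\|^2$, tests the $\phi$-difference equation with $\Delta^2\widehat{\phi}$, and thereby gains the dissipation $m_*\|\nabla\Delta^2\widehat{\phi}\|^2$, which is strong enough to absorb all variable-mobility commutators ($J_5,J_6,J_7$ in the paper) and to make the Korteweg cancellation available. The rest of your argument (the $J_1$ estimates under (a)--(c), the Lipschitz treatment of $\nu$ and $m$) is sound, but you should rebuild the phase part of the Gronwall functional at the $\|\Delta\widehat{\phi}\|$ level, or restrict to constant mobility, for the plan to go through. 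A minor additional point: the identity obtained by testing the $\uu_1$-equation with $\uu_2$ and vice versa requires the duality justification of the paper's Lemma 3.3 (or a mollification argument), which your proposal does not address.
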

\begin{remark}
There is a huge literature on the uniqueness of Leray--Hopf weak solutions to the Navier--Stokes equations for incompressible viscous fluids. Here, we try to verify some classical uniqueness criteria for our fluid-phase-field coupled system \eqref{NS}--\eqref{pot2} in a three dimensional bounded domain. One may refer to \cite{Se62} for condition $\mathrm{(a)}$, to \cite{Be02} for condition $\mathrm{(b)}$ and to \cite{Ri02} for condition $\mathrm{(c)}$. The results presented in Theorems \ref{thm:weakuni} indicate that in spite of the nonlinear coupling between the equations for the velocity field $\uu$ and the phase function $\phi$, the velocity field turns out to play a dominant role in the uniqueness property for weak solutions to problem \eqref{NS}--\eqref{IC} (cf. \cite{Zhao14}, where a fourth order phase-field equation for $\phi$ was considered). Since the higher-order parabolic equation yields some higher spatial regularity for $\phi$, possible difficulties from the convection term $\uu\cdot \nabla \phi$ as well as the Korteweg force term $\mu\nabla \phi$ are weakened in certain sense, comparing with the Navier--Stokes--Allen--Cahn system \cite{Wu17,GGW21} and the Navier--Stokes--Cahn--Hilliard system \cite{Ab,B99,GG09,GMT18,ZWH09}.
\end{remark}

Let us now introduce the notion of strong solution.

\begin{definition}\label{def:str}
Let $T\in (0,+\infty)$. For any initial datum $(\uu_0, \phi_0)\in \mathbf{V}_\sigma \times (H^5(\Omega)\cap H^4_N(\Omega))$, a set of functions $(\uu, \phi, \mu, \omega, P)$ is called a strong solution to problem \eqref{NS}--\eqref{IC} on $[0,T]$, if it admits the additional regularity
\bea
&&\uu \in L^{\infty}(0, T; \mathbf{V}_\sigma ) \cap L^2(0, T;
\mathbf{W}_\sigma) \cap H^1(0,T; \mathbf{H}_\sigma),
\label{reg-us}\\
&&\phi \in L^\infty(0,T; H^5(\Omega)\cap H^4_N(\Omega)) \cap L^2(0,T; H^6(\Omega)), \label{reg-phis}\\
&&\partial_t \phi \in L^\infty(0,T; (H^1(\Omega))^*)\cap L^2(0,T;H^2_N(\Omega)), \label{reg-phisb}\\
&& \mu\in L^\infty(0,T;H^1(\Omega))\cap L^2(0,T;H^2_N(\Omega)),
\label{reg-mus}\\
&& \omega \in L^\infty(0,T;H^3(\Omega)\cap H^2_N(\Omega))\cap L^2(0,T;H^4(\Omega)),\\
&& P\in L^\infty(0,T; H^1(\Omega)),
\label{reg-omes}
\eea
and satisfies the system \eqref{NS}--\eqref{pot2} a.e. in $\Omega\times (0,T)$ as well as the initial conditions \eqref{IC}.
\end{definition}

We can prove local well-posedness of problem \eqref{NS}--\eqref{IC} for arbitrarily large regular initial data in $\mathbf{V}_\sigma\times (H^5(\Omega)\cap H^4_N(\Omega))$.
\begin{theorem}[Local well-posedness]\label{thm:locstr}
Assume that $\Omega\subset \mathbb{R}^3$ is a bounded domain of class $C^6$ and (A1)--(A3) are satisfied. For any initial datum $(\uu_0, \phi_0)\in \mathbf{V}_\sigma\times (H^5(\Omega)\cap H^4_N(\Omega))$, there exists a time $T_0\in (0,+\infty)$ such that problem \eqref{NS}--\eqref{IC} admits a unique local strong solution $(\uu, \phi,\mu,\omega,P)$ on $[0,T_0]$ in the sense of Definition of \ref{def:str}.
\end{theorem}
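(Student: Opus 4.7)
The plan is to construct the local strong solution via a Galerkin approximation that respects the boundary conditions \eqref{boundary}, to close a single higher-order differential inequality on a short time interval $[0,T_0]$, and then to deduce uniqueness as an immediate consequence of the conditional criterion in Theorem \ref{thm:weakuni}.

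Let $\{\mathbf{w}_j\}\subset\mathbf{W}_\sigma$ denote the eigenfunctions of the Stokes operator $\mathbf{A}$ and $\{v_j\}\subset H^2_N(\Omega)$ those of the Neumann Laplacian $\mathcal{A}_N$, and build Galerkin approximations $(\uu_n,\phi_n)$ in the corresponding finite-dimensional subspaces, setting $\omega_n:=-\Delta\phi_n+f(\phi_n)$ and defining $\mu_n$ through \eqref{pot1}. The crucial point is that since $\Delta v_j=-\lambda_j v_j$, any finite linear combination of $v_j$ automatically satisfies both $\partial_{\mathbf{n}}\phi_n=0$ and $\partial_{\mathbf{n}}\Delta\phi_n=0$, so the full boundary condition \eqref{boundary} is respected at the discrete level. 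Existence of $(\uu_n,\phi_n)$ on a maximal interval and the discrete version of the energy identity \eqref{lowene} proceed as in the proof of Theorem \ref{thm:weak}.

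The heart of the argument is a closed higher-order estimate, uniform in $n$, valid on some $[0,T_0]$ with $T_0$ depending only on $\|(\uu_0,\phi_0)\|_{\mathbf{V}_\sigma\times H^5}$. I would (i) test the projected Navier--Stokes equation against $\mathbf{A}\uu_n$, expanding $\nabla\cdot(2\nu(\phi)D\uu)=\nu(\phi)\Delta\uu+2\nu'(\phi)D\uu\nabla\phi$ and using Lemma \ref{stokes}, Agmon's inequality and Gagliardo--Nirenberg to bound the variable-viscosity, convective and Korteweg terms; this yields $\tfrac{1}{2}\tfrac{d}{dt}\|\nabla\uu_n\|^2$ on the left and a dissipation of $\|\mathbf{A}\uu_n\|^2$; and (ii) test the phase-field equation \eqref{phasefield} against $\partial_t\mu_n$ and integrate by parts, which after substituting $\partial_t\omega_n=-\Delta\partial_t\phi_n+f'(\phi_n)\partial_t\phi_n$ and $\partial_t\mu_n=-\Delta\partial_t\omega_n+(f'(\phi_n)+\eta)\partial_t\omega_n+f''(\phi_n)\omega_n\partial_t\phi_n$ and using the Galerkin boundary conditions $\partial_{\mathbf{n}}\partial_t\phi_n=\partial_{\mathbf{n}}\Delta\partial_t\phi_n=0$, produces $\tfrac{1}{2}\tfrac{d}{dt}\int m(\phi_n)|\nabla\mu_n|^2$ and the coercive term $\|\Delta\partial_t\phi_n\|^2$ on the left, modulo lower-order contributions controlled by the Moser product estimate $\|fg\|_{H^m}\leq C(\|f\|_{L^\infty}\|g\|_{H^m}+\|f\|_{H^m}\|g\|_{L^\infty})$. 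Elliptic bootstrap on $-\Delta\omega=\mu-(f'(\phi)+\eta)\omega$ with $\partial_{\mathbf{n}}\omega=0$ and on $-\Delta\phi=\omega-f(\phi)$ with $\partial_{\mathbf{n}}\phi=0$ then upgrades the $L^\infty(H^1)$ bound on $\mu_n$ (obtained from $\|\nabla\mu_n\|\in L^\infty$ together with the $\bar\mu_n$ estimate from \eqref{pot1}) to $\omega_n\in L^\infty(H^3)$ and $\phi_n\in L^\infty(H^5)$, while the $L^2$-in-time regularities follow from the dissipation via the same bootstrap. The combined inequality reads
\begin{equation*}
\frac{d}{dt}\mathcal{Y}_n(t)+\mathcal{D}_n(t)\leq C\bigl(1+\mathcal{Y}_n(t)^{\kappa}\bigr),
\end{equation*}
with $\mathcal{Y}_n\sim\|\uu_n\|_{\mathbf{V}_\sigma}^2+\|\nabla\mu_n\|^2$ and $\mathcal{D}_n$ dominating $\|\mathbf{A}\uu_n\|^2+\|\Delta\partial_t\phi_n\|^2$, so a standard ODE comparison delivers $T_0>0$ and the uniform bound. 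Aubin--Lions compactness then produces the limit $(\uu,\phi,\mu,\omega)$ with the regularity \eqref{reg-us}--\eqref{reg-omes}, and the pressure $P\in L^\infty(0,T_0;H^1(\Omega))$ is recovered through De Rham's theorem together with Lemma \ref{stokes}. Uniqueness is automatic: any strong solution satisfies $\uu\in L^\infty(0,T_0;\mathbf{V}_\sigma)\hookrightarrow L^{4/3}(0,T_0;\mathbf{W}^{1,6}(\Omega))$, which fulfils condition $\mathrm{(c)}$ of Theorem \ref{thm:weakuni} with $s=1$, $p=6$, $q=4/3$, so two strong solutions coincide even inside the larger weak class.

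The main technical obstacle will be the simultaneous closure of the Navier--Stokes and sixth-order Cahn--Hilliard estimates: the Korteweg coupling $(\mu\nabla\phi,\mathbf{A}\uu)$ ties the $H^2$ regularity of $\uu$ to the $H^1$ regularity of $\mu$, while the latter is itself nonlinearly coupled, via $\mu=-\Delta\omega+(f'(\phi)+\eta)\omega$ and elliptic bootstrap, to the $H^3$ and $H^5$ norms of $\omega$ and $\phi$. Variable viscosity and mobility further produce lower-order terms $\nu'(\phi)\nabla\phi$ and $m'(\phi)\nabla\phi$ whose tracking at the $H^3$--$H^5$ level requires repeated use of the Moser product estimate, Agmon and Gagliardo--Nirenberg; the bookkeeping is delicate but manageable provided the growth exponent $\kappa$ stays finite.
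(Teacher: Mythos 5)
Your overall architecture coincides with the paper's: Galerkin approximation in the eigenspaces of $\mathbf{A}$ and $\mathcal{A}_N$, a higher-order differential inequality obtained by testing the momentum equation with $\mathbf{A}\uu_n$ and the phase-field equation with $\partial_t\mu_n$, elliptic bootstrap on the chain $\mu\to\omega\to\phi$ for the spatial regularity, and uniqueness read off from Theorem \ref{thm:weakuni}. However, there is a genuine gap in the central energy estimate as you describe it. When you test \eqref{phasefield} against $\partial_t\mu_n$, the convective contribution $(\uu_n\cdot\nabla\phi_n,\partial_t\mu_n)$ is \emph{not} a lower-order term controllable by Moser/Agmon/Gagliardo--Nirenberg: there is no useful bound on $\partial_t\mu_n$ at this stage. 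The only way out is to move it under the time derivative, $(\uu_n\cdot\nabla\phi_n,\partial_t\mu_n)=\frac{\mathrm{d}}{\mathrm{d}t}(\uu_n\cdot\nabla\phi_n,\mu_n)-(\partial_t\uu_n\cdot\nabla\phi_n,\mu_n)-(\uu_n\cdot\nabla\partial_t\phi_n,\mu_n)$, which forces the cross term $(\uu_n\cdot\nabla\phi_n,\mu_n)$ into the functional (your $\mathcal{Y}_n\sim\|\uu_n\|_{\mathbf{V}_\sigma}^2+\|\nabla\mu_n\|^2$ omits it, and one must check, via a smallness parameter $\gamma$ as in \eqref{gamma}--\eqref{LL2}, that the augmented functional is still equivalent to $\|\nabla\uu_n\|^2+\|\nabla\mu_n\|^2$) and produces the term $(\partial_t\uu_n\cdot\nabla\phi_n,\mu_n)$, whose absorption requires a dissipative contribution $\|\partial_t\uu_n\|^2$ on the left. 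That contribution is not generated by testing with $\mathbf{A}\uu_n$ alone; the paper obtains it by additionally testing the momentum equation with $\partial_t\uu_n$ and combining the two estimates (Lemma \ref{lem:duu}, inequality \eqref{duu-A1}). Your dissipation $\mathcal{D}_n\gtrsim\|\mathbf{A}\uu_n\|^2+\|\Delta\partial_t\phi_n\|^2$ lacks this term, so the inequality does not close as written.

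Two smaller points. First, the embedding you invoke for uniqueness, $L^\infty(0,T_0;\mathbf{V}_\sigma)\hookrightarrow L^{4/3}(0,T_0;\mathbf{W}^{1,6}(\Omega))$, is false ($H^1\not\hookrightarrow W^{1,6}$ in three dimensions); you need the $L^2(0,T_0;\mathbf{W}_\sigma)$ component of \eqref{reg-us} together with $H^2(\Omega)\hookrightarrow W^{1,6}(\Omega)$, after which the conclusion via Theorem \ref{thm:weakuni} is fine. Second, the $L^2$-in-time upgrade $\mu\in L^2(0,T_0;H^2_N(\Omega))$ does not follow from the constant-coefficient bootstrap alone: since the equation for $\mu$ is $-\nabla\cdot(m(\phi)\nabla\mu)=\partial_t\phi+\uu\cdot\nabla\phi$ with variable mobility, one needs an $H^2$ estimate for the variable-coefficient Neumann problem (the paper invokes \cite[Theorem 2.1]{Gi20}) before the elliptic chain $\omega\in L^2(H^4)$, $\phi\in L^2(H^6)$ can be run.
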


Due to the weak coupling structure of problem \eqref{NS}--\eqref{IC}, we can derive some blow-up criteria that only involve the velocity field or its gradient.
\begin{theorem}[Blow-up criteria]\label{thm:buc}
Assume that $\Omega\subset \mathbb{R}^3$ is a bounded domain of class $C^6$ and (A1)--(A3) are satisfied. Let $(\uu,\phi,\mu,\omega,P)$ be a local strong solution to problem \eqref{NS}--\eqref{IC} on $[0,T_1)$ for some $T_1\in (0,+\infty)$. Suppose that one of the following condition holds,
\begin{itemize}
\item[$\mathrm{(a)}$] $ \displaystyle{\int_{0}^{T_1}\|\uu(t)\|_{\mathbf{L}^p}^q \,\mathrm{d}t < +\infty}$, with $\displaystyle{\frac{3}{p}+\frac{2}{q} = 1}$,  $3<p\leq +\infty$;
\item[$\mathrm{(b)}$] $ \displaystyle{\int_0^{T_1} \|\nabla \uu(t)\|_{\mathbf{L}^p}^q\,\mathrm{d}t < +\infty}$, with $\displaystyle{\frac{3}{p}+\frac{2}{q} = 2}$, $\displaystyle{\frac{3}{2}<p\leq 3}$;
\end{itemize}
 then $(\uu,\phi,\mu,\omega,P)$ can be extended beyond of the maximal time of existence $T_1$.
\end{theorem}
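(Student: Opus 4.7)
The proof uses a standard continuation argument. Assume for contradiction that $T_1<+\infty$ is the maximal time of existence of the local strong solution. The goal is to show, under either (a) or (b), that the strong-solution norms of Definition \ref{def:str} remain bounded on $[0,T_1)$. Once this is in hand, an application of Theorem \ref{thm:locstr} at a time $t_*<T_1$ sufficiently close to $T_1$ produces an extension on $[t_*,t_*+T_0]$ with $t_*+T_0>T_1$, contradicting maximality.

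The heart of the proof is a coupled higher-order differential inequality. Testing \eqref{NS} with the Stokes operator $\mathbf{A}\uu$ and using Lemma \ref{stokes} together with (A1), I would derive
\begin{align*}
\tfrac{1}{2}\tfrac{d}{dt}\|\nabla\uu\|^2+\nu_*\|\mathbf{A}\uu\|^2\leq C\big(|((\uu\cdot\nabla)\uu,\mathbf{A}\uu)|+|(\mu\nabla\phi,\mathbf{A}\uu)|\big)+\text{variable-viscosity remainders}.
\end{align*}
In parallel, a higher-order energy estimate for the phase-field equation --- testing \eqref{phasefield} against suitable combinations of $\partial_t\phi$, $\mathcal{A}_N\mu$, and $\partial_t(-\Delta\omega)$ as in the proof of Theorem \ref{thm:locstr} --- yields a companion inequality controlling $\|\phi\|_{H^5}$, in which the only terms involving $\uu$ reduce, after integration by parts and use of the basic energy bounds, to products of $\|\nabla\uu\|$ (or $\|\mathbf{A}\uu\|$) with the higher-order phase-field norm. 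Combining, one obtains a closed inequality of the form
\begin{align*}
\tfrac{d}{dt}\mathcal{F}(t)+\tfrac{1}{2}\mathcal{D}(t)\leq \big(1+g(t)\big)\Psi\big(\mathcal{F}(t)\big),
\end{align*}
where $\mathcal{F}(t)=\|\nabla\uu(t)\|^2+\|\phi(t)\|_{H^5}^2+1$, $\mathcal{D}(t)$ is the associated dissipation (absorbing $\|\mathbf{A}\uu\|^2$ and the highest phase-field derivatives), $\Psi$ is a polynomial, and $g$ stands for $\|\uu\|_{\mathbf{L}^p}^q$ in case (a) or $\|\nabla\uu\|_{\mathbf{L}^p}^q$ in case (b).

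The core analytic estimate is the treatment of the inertial term $((\uu\cdot\nabla)\uu,\mathbf{A}\uu)$. Under (a), H\"older together with the Gagliardo--Nirenberg interpolation yields
\begin{align*}
|((\uu\cdot\nabla)\uu,\mathbf{A}\uu)|\leq \|\uu\|_{\mathbf{L}^p}\|\nabla\uu\|_{\mathbf{L}^{2p/(p-2)}}\|\mathbf{A}\uu\|\leq C\|\uu\|_{\mathbf{L}^p}\|\nabla\uu\|^{1-3/p}\|\mathbf{A}\uu\|^{1+3/p},
\end{align*}
and the Serrin relation $2/q+3/p=1$ makes Young's inequality absorb $\|\mathbf{A}\uu\|^2$ into the dissipation at the cost of a term $C\|\uu\|_{\mathbf{L}^p}^q\|\nabla\uu\|^2$. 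Case (b) is treated analogously, producing $C\|\nabla\uu\|_{\mathbf{L}^p}^q\|\nabla\uu\|^2$; the restriction $3/2<p\leq 3$ ensures the interpolation exponents are admissible. The Korteweg term $(\mu\nabla\phi,\mathbf{A}\uu)$ is bounded via Agmon's inequality $\|\nabla\phi\|_{L^\infty}\leq C\|\phi\|_{H^2}^{1/2}\|\phi\|_{H^3}^{1/2}$, using the $L^\infty(H^2_N)$ bound on $\phi$ from the basic energy inequality \eqref{lowene} and the higher-order control from the phase-field dissipation. With $g\in L^1(0,T_1)$ by assumption, Gronwall's lemma applied to $\mathcal{F}$ yields $\mathcal{F}\in L^\infty(0,T_1)$ and $\mathcal{D}\in L^1(0,T_1)$.

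The main obstacle is the delicate coupling: $\mathcal{F}$ mixes $\|\nabla\uu\|^2$ and $\|\phi\|_{H^5}^2$, and the dissipation $\mathcal{D}$ must dominate all the cross terms generated by $\mu\nabla\phi$, $\uu\cdot\nabla\phi$, and the variable coefficients $\nu(\phi),m(\phi)$; closing this coupling relies crucially on the sixth-order parabolic structure of \eqref{phasefield}, which provides enough spatial regularity of $\phi$ to absorb the dangerous terms without extra assumptions on $\phi$. Once $\mathcal{F}\in L^\infty(0,T_1)$ is proven, the remaining regularity items in Definition \ref{def:str} (time derivatives and pressure) follow from the equations themselves, so that $(\uu(t_*),\phi(t_*))\in\mathbf{V}_\sigma\times(H^5(\Omega)\cap H^4_N(\Omega))$ for some $t_*<T_1$, and Theorem \ref{thm:locstr} completes the continuation argument.
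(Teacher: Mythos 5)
Your overall architecture (higher-order differential inequality, Gronwall, continuation via Theorem \ref{thm:locstr}) and your treatment of the inertial term under the Serrin conditions coincide with the paper's proof. However, the decisive step is stated in a way that does not close. You arrive at $\frac{\mathrm{d}}{\mathrm{d}t}\mathcal{F}+\frac12\mathcal{D}\leq (1+g)\Psi(\mathcal{F})$ with $\Psi$ ``a polynomial'' and then invoke Gronwall to conclude $\mathcal{F}\in L^\infty(0,T_1)$. If $\Psi$ is genuinely superlinear this is false: $\frac{\mathrm{d}}{\mathrm{d}t}\mathcal{F}\leq C\mathcal{F}^3$ is exactly the inequality that only yields \emph{local} existence, and the entire content of the theorem is that hypotheses (a)/(b) \emph{linearize} the inequality. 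You do carry this out for the inertial term (obtaining $g(t)\|\nabla\uu\|^2$ with $g\in L^1$), but you never address the other unavoidable superlinear contribution: the term $C\|\nabla\mu\|^4$ coming from the phase-field estimate (the analogue of $I_1$ in Lemma \ref{lem:dmu}). The paper handles it by writing $\|\nabla\mu\|^4=\|\nabla\mu\|^2\cdot\|\nabla\mu\|^2\leq C\|\nabla\mu\|^2\,\Lambda(t)$ and using $\int_0^{T_1}\|\nabla\mu\|^2\,\mathrm{d}t<+\infty$ from the basic energy law \eqref{int}, so that the coefficient in Gronwall is $\|\uu\|_{\mathbf{L}^p}^q+\|\nabla\mu\|^2+1\in L^1(0,T_1)$. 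Without an explicit step of this kind your Gronwall conclusion is unjustified.

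A second, related issue is your choice of functional $\mathcal{F}=\|\nabla\uu\|^2+\|\phi\|_{H^5}^2+1$ and of test functions such as $\partial_t(-\Delta\omega)$ to control $\|\phi\|_{H^5}$ directly inside the differential inequality. The paper explicitly remarks that the boundary conditions \eqref{boundary} do \emph{not} imply $\partial_\mathbf{n}\Delta^2\phi=0$ on $\partial\Omega$, so the integrations by parts needed to produce a clean $\frac{\mathrm{d}}{\mathrm{d}t}\|\nabla\Delta\phi\|^2$-type identity (available in the periodic setting) fail here. The paper avoids this by taking $\Lambda(t)=\frac12\|\nabla\uu\|^2+\frac{\gamma}{2}\int_\Omega m(\phi)|\nabla\mu|^2\,\mathrm{d}x+\gamma(\uu\cdot\nabla\phi,\mu)$, i.e., tracking $\|\nabla\mu\|$ rather than $\|\phi\|_{H^5}$ (the phase-field equation is tested by $\partial_t\mu$, which is compatible with the Neumann conditions), and only \emph{afterwards} recovering the bound on $\|\phi(t)\|_{H^5}$ from the boundedness of $\|\nabla\mu(t)\|$ via the elliptic estimates. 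You should restructure your argument along these lines; as written, both the boundary-compatibility of your higher-order phase-field estimate and the final Gronwall step are gaps.
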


\section{Global Weak Solutions}\label{sec:weak}
\setcounter{equation}{0}

An important property of the coupled system \eqref{NS}--\eqref{IC} is that it satisfies a basic energy law, which implies the dissipative nature of the evolution problem. It states that the sum of the kinetic and elastic energy is dissipated due to viscosity of the fluid and certain diffusive process.

\begin{lemma}[Basic energy law]
\label{BEL}
Let $(\uu, \phi,\mu,\omega)$ be a smooth solution to the problem \eqref{NS}--\eqref{IC}.
The following energy identity holds:
 \be
 \dfrac{\mathrm{d}}{\mathrm{d}t}\Big(\frac{1}{2}\|\uu(t)\|^2+E(\phi(t))\Big)
+\int_\Omega \big(2\nu(\phi(t))|D\uu(t)|^2 +m(\phi(t)) |\nabla \mu(t)|^2\big)\,\mathrm{d}x =0,\quad \forall\, t>0. \label{basic energy law}
 \ee
\end{lemma}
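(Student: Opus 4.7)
The plan is to derive the identity by testing the momentum equation \eqref{NS} with $\uu$, testing the phase equation \eqref{phasefield} with the chemical potential $\mu$, and then showing that the resulting right-hand sides precisely match so that, upon summation, we recover $\frac{\mathrm{d}}{\mathrm{d}t} E(\phi)$ together with the dissipation terms.

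First, I would take the $\mathbf{L}^2$-inner product of \eqref{NS} with $\uu$. The convective term $((\uu\cdot\nabla)\uu,\uu)$ vanishes because $\nabla\cdot\uu=0$ and $\uu|_{\partial\Omega}=\mathbf{0}$, and the pressure term $(\nabla P,\uu)$ vanishes for the same reason. Integration by parts together with the identity $D\uu:\nabla\uu=|D\uu|^2$ (the antisymmetric part of $\nabla\uu$ contracts to zero with the symmetric tensor $D\uu$) yields
\begin{equation}\nonumber
\tfrac12\tfrac{\mathrm{d}}{\mathrm{d}t}\|\uu\|^2+\int_\Omega 2\nu(\phi)|D\uu|^2\,\mathrm{d}x=\int_\Omega \mu\,\nabla\phi\cdot\uu\,\mathrm{d}x.
\end{equation}
Next, testing \eqref{phasefield} with $\mu$ and integrating by parts, using $\partial_{\mathbf{n}}\mu=0$ from \eqref{boundary}, gives
\begin{equation}\nonumber
(\partial_t\phi,\mu)+(\uu\cdot\nabla\phi,\mu)=-\int_\Omega m(\phi)|\nabla\mu|^2\,\mathrm{d}x.
\end{equation}

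The central step is to recognize that $(\partial_t\phi,\mu)=\frac{\mathrm{d}}{\mathrm{d}t}E(\phi)$. Differentiating \eqref{defiE} in time, using $\partial_t\omega=-\Delta\partial_t\phi+f'(\phi)\partial_t\phi$, one obtains
\begin{equation}\nonumber
\tfrac{\mathrm{d}}{\mathrm{d}t}E(\phi)=\int_\Omega\!\bigl[\,\omega\,\partial_t\omega+\eta\nabla\phi\cdot\nabla\partial_t\phi+\eta f(\phi)\partial_t\phi\,\bigr]\,\mathrm{d}x.
\end{equation}
I would then integrate by parts twice in the term $-\int_\Omega\omega\,\Delta\partial_t\phi\,\mathrm{d}x$; the two boundary contributions are $\int_{\partial\Omega}\omega\,\partial_{\mathbf{n}}\partial_t\phi\,\mathrm{d}S$ and $\int_{\partial\Omega}(\partial_t\phi)\,\partial_{\mathbf{n}}\omega\,\mathrm{d}S$, which both vanish because $\partial_{\mathbf{n}}\phi=0$ (hence $\partial_{\mathbf{n}}\partial_t\phi=0$), while $\partial_{\mathbf{n}}\omega=-\partial_{\mathbf{n}}\Delta\phi+f'(\phi)\partial_{\mathbf{n}}\phi=0$ by the boundary conditions \eqref{boundary}. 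A single integration by parts in $\eta\int_\Omega\nabla\phi\cdot\nabla\partial_t\phi\,\mathrm{d}x$ (again using $\partial_{\mathbf{n}}\phi=0$) converts it to $-\eta\int_\Omega\Delta\phi\,\partial_t\phi\,\mathrm{d}x$. Collecting, the integrand multiplying $\partial_t\phi$ becomes $-\Delta\omega+f'(\phi)\omega+\eta(-\Delta\phi+f(\phi))$, which equals $-\Delta\omega+f'(\phi)\omega+\eta\omega=\mu$ in view of \eqref{pot1}--\eqref{pot2}. This confirms $\frac{\mathrm{d}}{\mathrm{d}t}E(\phi)=(\partial_t\phi,\mu)$.

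Finally, summing the momentum identity with the identity obtained from the phase equation (after substituting $(\partial_t\phi,\mu)=\frac{\mathrm{d}}{\mathrm{d}t}E(\phi)$) causes the Korteweg term $\int_\Omega\mu\nabla\phi\cdot\uu\,\mathrm{d}x$ and the transport term $(\uu\cdot\nabla\phi,\mu)$ to cancel (they are negatives of each other), yielding \eqref{basic energy law}. The only nontrivial point in the argument is the boundary-term bookkeeping in the computation of $\frac{\mathrm{d}}{\mathrm{d}t}E(\phi)$, which is precisely why the triple boundary condition $\partial_{\mathbf{n}}\phi=\partial_{\mathbf{n}}\Delta\phi=\partial_{\mathbf{n}}\mu=0$ is built into \eqref{boundary}; everything else is a formal manipulation justified by the assumed smoothness of the solution.
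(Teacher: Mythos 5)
Your proposal is correct and follows essentially the same route as the paper: test \eqref{NS} with $\uu$ and \eqref{phasefield} with $\mu$, identify $(\partial_t\phi,\mu)=\frac{\mathrm{d}}{\mathrm{d}t}E(\phi)$ via integration by parts using the boundary conditions \eqref{boundary}, and sum so that the Korteweg and transport terms cancel. The only cosmetic remark is that those two terms are equal as integrals (they appear with opposite signs only after the rearrangement), but your conclusion and all the boundary-term bookkeeping match the paper's proof.
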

\begin{proof}
For smooth solutions, a formal derivation can be carried out by multiplying \eqref{NS} by $\uu$ and \eqref{phasefield} by $\mu$,
respectively, then integrating over $\Omega$ and adding the resultants together, we obtain
\begin{align}
\frac{1}{2}\frac{\mathrm{d}}{\mathrm{d}t}\|\uu\|^2+ \int_\Omega \mu \partial_t \phi\,\mathrm{d}x
+\int_\Omega \big(2\nu(\phi)|D\uu|^2 +m(\phi) |\nabla \mu|^2\big)\,\mathrm{d}x
=0,
\label{bel-u}
\end{align}
where we have used the facts
$$
\int_\Omega \nabla P\cdot\uu\,\mathrm{d}x =0,
$$
thank to \eqref{incom} and the no-slip boundary condition for $\uu$. Next, using the equations \eqref{pot1}, \eqref{pot2} and integration by parts, we see that
\begin{align}
\int_\Omega \mu \partial_t \phi \,\mathrm{d}x
&= \int_\Omega \Big[(-\Delta \omega +f'(\phi)\omega)\partial_t \phi +\eta \omega \partial_t \phi\Big]\,\mathrm{d}x\notag\\
& =\int_\Omega \Big[\big(-\Delta \partial_t \phi + f'(\phi)\partial_t \phi\big)\omega +\eta \big(\nabla \phi\cdot\nabla \partial_t \phi + f(\phi)\partial_t \phi\big)\Big]\,\mathrm{d}x\notag\\
& =\frac{\mathrm{d}}{\mathrm{d}t}E(\phi(t)),
\label{bel-p}
\end{align}
which together with \eqref{bel-u} yields the conclusion.
\end{proof}

\subsection{Existence} \label{sec:Galer}
Based on Lemma \ref{BEL}, we can employ a suitable Galerkin scheme (cf. \cite{B99,DLP19,DLL07}) to prove the existence of global weak solutions to problem \eqref{NS}--\eqref{IC}. \smallskip

\textbf{The Galerkin Approximation}.
To this end, we recall that the set of eigenfunctions of the operator $-\Delta$ subject to the homogeneous Neumann boundary condition denoted by $\{w_{j}\}_{j=1}^{\infty}$ forms an orthonormal basis of $L^{2}(\Omega)$ and is also an orthogonal basis of $H^{2}_{N}(\Omega)$. In particular, we can take $w_{1} = 1$, which corresponds to the first eigenvalue $\lambda_1=0$. For every $j>1$, $w_j$ cannot be a constant and $\overline{w_j}=0$,  whence $\lambda_j=\|\nabla w_i\|^2>0$. Analogously, we denote the set of eigenfunctions of the Stokes operator $\mathbf{A}$ by $\{ \mathbf{v}_j\}_{j=1}^\infty$, which forms an orthonormal basis of $\mathbf{H}_\sigma$ and is also an orthogonal basis of $\mathbf{W}_\sigma$. For any given positive integer $n$, we define the finite-dimensional subspaces $W_n := \mathrm{span} \{w_1, ..., w_n\}\subset H^2_N(\Omega)$, $\mathbf{V}_n := \mathrm{span}\{\mathbf{v}_1, ...,\mathbf{v}_n\}\subset \mathbf{W}_\sigma$, with corresponding orthogonal projections $\Pi_n: L^2(\Omega) \to W_n$ (with respect to the inner product in $L^2(\Omega)$) and $\mathrm{P}_n: \mathbf{H}_\sigma \to \mathbf{V}_n$ (with respect to the inner product in $\mathbf{H}_\sigma$). Then we look for functions of the form
\begin{align*}
\uu_{n}(x,t) := \sum_{j=1}^{n} a_{n,j}(t) \mathbf{v}_{j}(x),
\quad  \phi_{n}(x,t)  := \sum_{j=1}^{n} b_{n,j}(t) w_{j}(x),
\quad \mu_{n}(x,t)  := \sum_{j=1}^{n} c_{n,j}(t) w_{j}(x),
\end{align*}
which solve the following approximating problem:
\begin{equation} \label{eq:weak}
\begin{cases}
\left\langle \partial_t \mathbf{u}_{n}, \mathbf{v} \right\rangle_{\mathbf{V}^*_\sigma, \mathbf{V}_\sigma}
+ ((\mathbf{u}_{n} \cdot \nabla)\mathbf{u}_{n},\mathbf{v}) + (2\nu(\phi_{n}) D \mathbf{u}_{n}, \nabla \mathbf{v}) = (\mu_{n} \nabla \phi_{n}, \mathbf{v}), & \quad \forall \: \mathbf{v} \in \mathbf{V}_n,\\
\left\langle \partial_t\phi_{n}, w \right\rangle_{(H^1)^*,H^1} + (\uu_{n} \cdot \nabla \phi_{n},w) + (m(\phi_n)\nabla \mu_{n}, \nabla w) = 0, & \quad \forall \: w \in W_n,\\
(\mu_{n},w) = -(\omega_n,\Delta w) + (f'(\phi_{n})\omega_n + \eta \omega_n, w), & \quad \forall \: w \in W_n,\\
\qquad \text{with}\ \omega_{n} = -\Delta \phi_{n} + f(\phi_{n}), & \\
\uu^{n}(\cdot,0) = \mathrm{P}_n(\uu_0) =: \uu_{0}^n,\quad \phi^{n}(\cdot, 0) = \Pi_n(\phi_{0}) =: \phi_{0}^n, & \quad \text{in } \Omega,
\end{cases}
\end{equation}
for a.a. $t\in (0,T)$.

Inserting the expressions of the approximate solutions $(\uu_n,\phi_n,\mu_n)$ into the above weak formulation, we arrive at a system of ordinary differential equations in the unknowns $(a_{n,j}(t)$, $b_{n,j}(t)$, $c_{n,j}(t))$, $j=1,...,n$.
Recalling the assumptions (A1)--(A3), we can conclude the following result from the classical Cauchy--Lipschitz theorem for the system of ODEs:
\begin{proposition}\label{wellODE}
Assume that the assumptions in Theorem \ref{thm:weak} are satisfied and $T>0$.
For any positive integer $n$, there exists a time $T_n \in(0,T]$ such that the approximating problem \eqref{eq:weak} admits a unique local solution $(\uu_{n},\phi_{n}, \mu_{n}, \omega_{n})$ on $[0,T_n]$, which is given by the functions $\{a_{n,j}(t), b_{n,j}(t), c_{n,j}(t)\}_{j = 1}^n\subset C^1([0,T_n])$.
\end{proposition}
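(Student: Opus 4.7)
The plan is to reduce the approximating problem \eqref{eq:weak} to a finite-dimensional system of first-order ODEs in the coefficients $(a_{n,j}(t), b_{n,j}(t), c_{n,j}(t))_{j=1,\dots,n}$ and then invoke the classical Cauchy--Lipschitz (Picard--Lindelöf) theorem. The only real subtlety is that the third equation in \eqref{eq:weak} is an algebraic identity rather than an evolution equation, so $\mu_n$ (equivalently, the coefficients $c_{n,j}$) can and must be eliminated in favor of $\phi_n$ (equivalently, the coefficients $b_{n,j}$) before the system can be put into the standard form $\dot{y}=F(y)$.

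To perform this elimination, I would test the third equation against $w=w_k$ for each $k=1,\dots,n$; the $L^2$-orthonormality of $\{w_j\}$ yields
\[
c_{n,k} = -(\omega_n, \Delta w_k) + \bigl(f'(\phi_n)\omega_n + \eta\omega_n,\, w_k\bigr), \qquad k=1,\dots,n,
\]
where $\omega_n = \sum_j \lambda_j b_{n,j} w_j + f(\phi_n)$ depends polynomially on $b_n := (b_{n,1},\dots,b_{n,n})$ thanks to (A3). Hence each $c_{n,k}$ can be written as a smooth function $c_{n,k}=\mathcal{C}_k(b_n)$, and $\mu_n$ drops out of the remaining equations. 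Testing the first equation of \eqref{eq:weak} against $\mathbf{v}=\mathbf{v}_k$ and the second against $w=w_k$, and using orthonormality of $\{\mathbf{v}_j\}$ in $\mathbf{H}_\sigma$ and of $\{w_j\}$ in $L^2(\Omega)$, I then obtain a closed first-order system
\[
\dot{a}_{n,k} = \mathcal{F}_k(a_n, b_n), \qquad \dot{b}_{n,k} = \mathcal{G}_k(a_n, b_n), \qquad k=1,\dots,n,
\]
with initial data $a_{n,k}(0) = (\uu_0, \mathbf{v}_k)$ and $b_{n,k}(0) = (\phi_0, w_k)$.

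The main task is then to check that $\mathcal{F}_k$ and $\mathcal{G}_k$ are locally Lipschitz on $\mathbb{R}^{2n}$. The convective, coupling, and $\mu_n\nabla\phi_n$ contributions are polynomial in $(a_n,b_n)$ after integration against the fixed smooth basis functions $\mathbf{v}_j, w_j$. The remaining contributions involve Nemitskii-type integrals of $\nu(\phi_n)$ and $m(\phi_n)$ paired against gradients of basis elements; since $\nu, m \in C^2(\mathbb{R})$ by (A1)--(A2) and since $\phi_n(x,t)=\sum_j b_{n,j}(t) w_j(x)$ is $C^\infty$ in $x$ and linear in $b_n$, differentiation under the integral sign shows these are $C^2$ functions of $b_n$, and therefore certainly locally Lipschitz. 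The Cauchy--Lipschitz theorem then provides some $T_n\in(0,T]$ and a unique solution $(a_n, b_n)\in C^1([0,T_n];\mathbb{R}^{2n})$; the coefficients $c_{n,j}(t):=\mathcal{C}_j(b_n(t))$ inherit $C^1([0,T_n])$ regularity, and reconstructing $(\uu_n,\phi_n,\mu_n,\omega_n)$ completes the proof. There is no genuine obstacle at this stage: the only point requiring a moment of care is recognizing the purely algebraic character of the $\mu_n$-equation, which is precisely what allows the closed ODE system to be written down.
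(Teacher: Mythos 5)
Your proposal is correct and follows exactly the route the paper indicates (and omits as standard): insert the ansatz into the Galerkin weak formulation, eliminate the coefficients $c_{n,j}$ via the algebraic third equation, and apply the Cauchy--Lipschitz theorem to the resulting locally Lipschitz ODE system in $(a_n,b_n)$. Your explicit treatment of the algebraic character of the $\mu_n$-equation and of the Nemytskii-type coefficients involving $\nu(\phi_n)$ and $m(\phi_n)$ fills in the details the paper leaves to the reader, and does so correctly.
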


\textbf{Uniform a priori estimates}. Since the proof of Proposition \ref{wellODE} is standard, we omit it here. Below we derive some a priori estimates to show that for any $n$, $T_{n} = T$ and the approximate solutions $(\uu_n, \phi_{n}, \mu_{n}, \omega_{n})$ are uniformly bounded with respect to the approximating parameter $n$ in suitable function spaces.\smallskip

\textit{First estimate}. Testing the second equation in \eqref{eq:weak} by $1$, using integration by parts, we have
$$
\frac{\mathrm{d}}{\mathrm{d}t}\int_\Omega \phi_n(t)\,\mathrm{d}x=0,
$$
for any $t\in (0,T_n)$. Integrating with respect to time yields
\begin{align}
\int_\Omega \phi_n(t)\,\mathrm{d}x=\int_\Omega \phi_n(0)\,\mathrm{d}x=\int_\Omega \phi_0^n\,\mathrm{d}x=\int_\Omega \phi_0\,\mathrm{d}x,\quad \forall\, t\in [0,T_n].\label{app-e0}
\end{align}

\textit{Second estimate}. Similar to the proof of Lemma \ref{BEL}, in \eqref{eq:weak}, we take the test functions $\mathbf{v}=\uu_n$ in the first equation for $\uu_n$, $w=\mu_n$ in the second equation and $w=\partial_t\phi_n$ in the third equation, respectively. After integration by parts, we find that
\be
 \dfrac{\mathrm{d}}{\mathrm{d}t}\Big(\frac{1}{2}\|\uu_n\|^2+E(\phi_n)\Big)
+\int_\Omega \big(2\nu(\phi_n)|D\uu_n|^2 +m(\phi_n) |\nabla \mu_n|^2\big)\,\mathrm{d}x =0, \label{app-bel}
\ee
for any $t\in (0,T_n)$. In the derivation of \eqref{app-bel}, we have used the fact $\partial_{t}\phi_{n} \in W_{n}$ and
\begin{align*}
&(\Pi_{{n}}\Delta\omega_n, \partial_{t}\phi_{n}) = (\omega_n, \Delta\Pi_{{n}}(\partial_{t}\phi_{n}))= (\omega_n, \Delta\partial_{t}\phi_{n}),\\
&(\Pi_{{n}}(f'(\phi_{n})\omega_n), \partial_{t}\phi_{n}) = (f'(\phi_{n})\omega_n, \Pi_{{n}}(\partial_{t}\phi_{n})) = (f'(\phi_{n})\omega_n,\partial_{t}\phi_{n}),\\
&(\Pi_{{n}}f(\phi_n),\partial_{t}\phi_{n})=(f(\phi_n),\Pi_{{n}}(\partial_{t}\phi_{n})) =(f(\phi_n),\partial_{t}\phi_{n}).
\end{align*}
Integrating \eqref{app-bel} with respect to time, we obtain the following energy identity for approximate solutions
\begin{align}
&\frac{1}{2}\|\uu_n(t)\|^2+E(\phi_n(t))+\int_0^t \int_\Omega \big(2\nu(\phi_n(\tau))|D\uu_n(\tau)|^2 +m(\phi_n(\tau)) |\nabla \mu_n(\tau)|^2\big)\,\mathrm{d}x \mathrm{d}\tau\notag\\
&\quad = \frac{1}{2}\|\uu_n(0)\|^2+E(\phi_n(0)),\quad \forall\, t\in (0,T_n].
\label{app-e1}
\end{align}
Since by definition, $\uu^n_0\to \uu_0$ in $\mathbf{H}_\sigma$ and $\phi_0^n\to \phi_0$ in $H^2(\Omega)$ as $n\to +\infty$, there exists $n^*\in \mathbb{N}$ such that for all $n>n^*>0$,
$$
\|\uu^n_0\|\leq 1+\|\uu_0\|,\quad \|\phi_0^n\|_{H^2}\leq 1+\|\phi_0\|_{H^2}.
$$
Then we infer from the Sobolev embedding theorem $H^1(\Omega)\hookrightarrow L^p(\Omega)$, $p\in [1,6]$, the assumption (A3) and Young's inequality that the initial approximating energy can be bounded by
\begin{align}
\frac{1}{2}\|\uu_n(0)\|^2+E(\phi_n(0)) \leq C(1+\|\uu_0\|^2+ \|\phi_0\|_{H^2}^2+ \|\phi_0\|_{L^6}^6)\leq C(1+\|\uu_0\|^2+ \|\phi_0\|_{H^2}^6),
\label{app-e2}
\end{align}
where the positive constant $C$ depends on $\Omega$, $\eta$, but is independent of $n$.

Next, we show that the energy $E(\phi_n)$ is bounded from below by a uniform constant. The case $\eta\geq 0$ is trivial, since $F(s)\geq 0$ for $s\in \mathbb{R}$. For the case $\eta<0$, we infer from the following simple fact
$$
sf(s)-2F(s)=s^4-s^2-\frac12(s^2-1)^2= \frac12 s^4-\frac12,\quad \forall\, s\in \mathbb{R}
$$
that
\begin{align*}
 \eta \int_\Omega \Big[\frac{1}{2}|\nabla \phi_n|^2 +F(\phi_n)\Big] \,\mathrm{d}x
& = - \frac{\eta}{2}\int_\Omega \phi_n\Delta\phi_n\, \mathrm{d} x +\eta \int_\Omega F(\phi_n)\,\mathrm{d}x \notag\\
& = \frac{\eta}{2} \int_\Omega \phi_n (-\Delta \phi_n+f(\phi_n))\,\mathrm{d}x - \frac{\eta}{2} \int_\Omega (\phi_n f(\phi_n)-2F(\phi_n))\,\mathrm{d}x\notag \\
& \geq -\frac{1}{4} \int_\Omega (-\Delta \phi_n +f(\phi_n))^2\,\mathrm{d} x -\frac{\eta^2}{4}\int_\Omega \phi_n^2\,\mathrm{d}x
-\frac{\eta}{4}\int_\Omega (\phi_n^4-1)\,\mathrm{d}x \notag\\
&\geq  -\frac{1}{4} \int_\Omega (-\Delta \phi_n +f(\phi_n))^2\,\mathrm{d} x -\frac{\eta}{8}\int_\Omega \phi_n^4\,\mathrm{d}x
+\left(\frac{\eta}{4}+\frac{\eta^3}{8}\right)|\Omega|.
\end{align*}
Hence, for $\eta\in \mathbb{R}$, it holds
\begin{align}
\frac12\|\uu_n(t)\|^2+E(\phi_n(t))\geq \frac12\|\uu_n(t)\|^2+ \frac{1}{4} \int_\Omega (-\Delta \phi_n(t) +f(\phi_n(t)))^2\,\mathrm{d} x-C, \quad \forall\, t\in (0,T_n],\label{app-e3}
\end{align}
where $C>0$ is a constant only depending on $\eta$ and $\Omega$. We refer to \cite{DLP19} for calculations for a more general situation.

To recover an $H^2$-estimate for $\phi_n$, we consider the Neumann problem
\begin{equation}
\begin{cases}
-\Delta \phi_n + f(\phi_n) = \omega_n,\quad\ \text{in}\ \Omega,\\
\partial_\mathbf{n} \phi_n=0,\qquad\qquad \quad \, \text{on}\ \ \partial \Omega.
\end{cases}
\label{ellipA}
\end{equation}
Multiplying the equation by $\phi_n$, integrating over $\Omega$, we have
\begin{align*}
\|\nabla \phi_n\|^2 +\int_\Omega \phi_n^4\,\mathrm{d}x
&= \int_\Omega (\omega_n\phi_n+\phi_n^2)\,\mathrm{d}x\leq \frac12 \|\omega_n \|^2+  \frac{3}{2}\|\phi_n\|^2\\
&\leq \frac12 \|\omega_n\|^2+  \frac{1}{2}\int_\Omega \phi_n^4\,\mathrm{d}x +\frac{9}{8}|\Omega|.
\end{align*}
Then from the above estimate and the Cauchy--Schwarz inequality we deduce that
$$
\|\phi_n\|_{H^1}\leq C(1+\|\omega_n\|),
$$
which further implies
\begin{align}
\|\phi_n\|_{H^2} &\leq C(\|\omega_n-f(\phi_n)\|+\|\phi_n\|)\leq C(\|\omega_n\|+\|\phi_n\|_{L^6}^3+\|\phi_n\|)\notag\\
& \leq C(1+\|\omega_n\|^3),
\label{app-e4}
\end{align}
where $C>0$ only depends on $\Omega$.

Hence, collecting the estimates \eqref{app-e2}, \eqref{app-e3} and \eqref{app-e4}, using the assumptions (A1), (A2) and Korn's inequality, we deduce from \eqref{app-e1} that
\begin{align}
&\|\uu_n(t)\|^2+\|\phi_n(t)\|^2_{H^2} + \|\omega_n(t)\|^2 \notag\\
&\quad + \int_0^t \big(\nu_*\|\nabla \uu(\tau)\|^2+ m_*\|\nabla \mu_n(\tau)\|^2\big)\,\mathrm{d}\tau \leq C,\quad \forall\, t\in (0,T_n],
\label{app-e5}
\end{align}
where the constant $C>0$ depends on $\|\uu_0\|$, $\|\phi_0\|_{H^2}$, $\eta$ and $\Omega$, but is independent of $n$ and $t$.

\begin{remark}
As a consequence of the uniform estimate \eqref{app-e5}, we can extend the local solution $(\uu_n,\phi_n,\mu_n,\omega_n)$ to be a global one that is defined on
the full time interval $[0, T]$, i.e., $t_n = T$ for all $n>n^*$. Besides, the estimate \eqref{app-e5} holds on $[0,T]$ with $C$ being independent of $T$ and $n$.
\end{remark}

\textit{Third estimate}. Testing the third equation in \eqref{eq:weak} by $1$ and after integration by parts, we infer from \eqref{app-e5} that
\begin{align}
\left|\int_\Omega \mu_n(t)\,\mathrm{d} x\right|
&= \left|\int_\Omega \big( f'(\phi_n)\omega_n+ \eta\omega_n\big)\,\mathrm{d}x\right|\notag\\
&\leq \|f'(\phi_n)\|\|\omega_n\|+|\eta||\Omega|^\frac12 \|\omega_n\|
\leq C,\quad \forall\, t\in (0,T],
\label{app-e6}
\end{align}
where $C>0$ is a constant independent of $n$.
Then it follows from the Poincar\'{e}--Wirtinger inequality that
\begin{align}
\int_0^t \|\mu_n(\tau)\|_{H^1}^2\,\mathrm{d}\tau\leq C,\quad  \forall\, t\in (0,T],\label{app-e7}
\end{align}
where the constant $C>0$ depends on $\|\uu_0\|$, $\|\phi_0\|_{H^2}$, $\eta$, $\Omega$ and $T$, but is independent of $n$.
\smallskip

\textit{Fourth estimate}.
From the following inequality
\begin{align*}
\|\uu_n\cdot \nabla\phi_n\|\leq \|\uu_n\|_{\mathbf{L}^3}\|\nabla \phi_n\|_{\mathbf{L}^6} \leq C\|\uu_n\|^\frac12\|\nabla \uu_n\|^\frac12\|\phi_n\|_{H^2}
\end{align*}
and \eqref{app-e5}, we also obtain that
\begin{align*}
\int_0^t \|\uu_n(\tau)\cdot \nabla\phi_n(\tau)\|^2\,\mathrm{d}\tau \leq C,\quad \forall\, t\in (0,T].
\end{align*}
Besides, for any $\psi\in L^2(0,T;H^1(\Omega))$, it follows from \eqref{app-e5} and the assumption (A2) that
\begin{align*}
&\left|\int_0^T(m(\phi_n(t))\nabla \mu_n(t),\nabla \psi)\,\mathrm{d}t\right|\\
&\quad \leq \sup_{t\in[0,T]}\|\sqrt{m(\phi_n(t))}\|_{L^\infty} \|\sqrt{m(\phi_n)}\nabla \mu_n\|_{L^2(0,T;\mathbf{L}^2(\Omega))}
\|\psi\|_{L^2(0,T;H^1(\Omega))}\\
&\quad \leq C.
\end{align*}
By comparison in \eqref{eq:weak}, we then infer from the above estimates that
\begin{align}
\int_0^t \|\partial_t \phi_n(\tau) \|_{(H^1)^*}^2 \,\mathrm{d} \tau \leq C,\quad \forall\, t\in (0,T],
\label{app-e8}
\end{align}
where the constant $C>0$ depends on $\|\uu_0\|$, $\|\phi_0\|_{H^2}$, $\eta$, $\Omega$ and $T$, but is independent of $n$.
\smallskip

\textit{Fifth estimate}. We proceed to estimate the time derivative of $\uu_n$.
From \eqref{app-e5} and \eqref{app-e7}, it follows that for all $t\in (0,T]$,
\begin{align}
\int_0^t \|\mu_n(\tau)\nabla \phi_n(\tau)\|^2\,\mathrm{d}\tau & \leq C\int_0^t \|\mu_n(\tau)\|_{L^6}^2\|\nabla \phi_n(\tau)\|_{\mathbf{L}^3}^2\,\mathrm{d}\tau\notag\\
& \leq C \sup_{\tau\in [0,t]} \|\phi_n(\tau)\|_{H^2}^2 \int_0^t \|\mu_n(\tau)\|_{H^1}^2\,\mathrm{d}\tau \leq C. \label{app-e9}
\end{align}
 Besides, the Ladyzhenskaya inequality yields that for arbitrary $\mathbf{v} \in L^{4}(0,T;\mathbf{V}_\sigma)$, it holds
\begin{align*}
\left|\int_0^t ((\uu_n(\tau)\cdot\nabla) \uu_n(\tau), \mathbf{v}(\tau))\,\mathrm{d}\tau\right|
& =\left| \int_{0}^{t} \int_{\Omega} (\uu_{n} (\tau)\otimes \uu_{n}(\tau)): \nabla \mathbf{v}(\tau)\,\mathrm{d}x\mathrm{d}\tau\right|\\
&\leq \int_{0}^{t} \|\uu_{n}(\tau)\|_{\mathbf{L}^{4}}^{2} \|\nabla \mathbf{v}(\tau)\|\,\mathrm{d}\tau \\
&\leq C \sup_{\tau\in[0,t]} \|\uu_{n}(\tau)\|^{\frac{1}{2}} \|\uu_{n}\|_{L^2(0,t;\mathbf{V}_\sigma)}^{\frac{3}{2}} \|\mathbf{v}\|_{L^{4}(0,t;\mathbf{V}_\sigma)},
\end{align*}
for all $t\in (0,T]$. Hence, we obtain
\begin{align}\label{app-e10}
\|\partial_{t} \uu_{n}\|_{L^{\frac{4}{3}}(0,t;\mathbf{V}^*_\sigma)} \leq C,\quad \forall\, t\in (0,T],
\end{align}
where the constant $C>0$ depends on $\|\uu_0\|$, $\|\phi_0\|_{H^2}$, $\eta$, $\Omega$ and $T$, but is independent of $n$. \smallskip

\textit{Sixth estimate}. We now derive some higher-order spatial estimates for $\phi_n$. Recall the definition that $\mu_n=\Pi_n\Delta^2 \phi_n+ \Pi_n L(\phi_n)=\Delta^2 \phi_n+ \Pi_n L(\phi_n)$, where the lower-order term is given by
\begin{align}
L(\phi_n):= -\Delta f(\phi_n)+ f'(\phi_n)\omega_n+\eta\omega_n, \label{LLL}
\end{align}
Then it follows from \eqref{app-e5} and the Sobolev embedding theorem that
$$
\|L(\phi_n(t))\|\leq \|\Delta f(\phi_n)\|+ \|f'(\phi_n)+\eta\|_{L^\infty}\|\omega_n\|\leq C,\quad \forall\, t\in [0,T],
$$
with $C>0$ depending on $\|\uu_0\|$, $\|\phi_0\|_{H^2}$, $\eta$, $\Omega$, but  independent of $n$ and $T$.
Consider the elliptic problem for $\phi_n$
\begin{equation}
\begin{cases}
\Delta^2 \phi_n = \mu_n-\Pi_n L(\phi_n),\quad\ \ \text{in}\ \Omega,\\
\partial_\mathbf{n} \phi_n=\partial_\mathbf{n}\Delta\phi_n=0,\qquad \quad \, \text{on}\ \ \partial \Omega.
\end{cases}
\label{ellipB}
\end{equation}
We deduce from the classical elliptic estimate for bi-Laplacian, \eqref{app-e5} and \eqref{app-e6} that
\begin{align*}
\|\phi_n\|_{H^4}&\leq C(\|\Delta^2 \phi_n\|+\|\phi_n\|) \\
&\leq C(\|\mu_n-\overline{\mu_n}\|+|\overline{\mu_n}|+\|\Pi_n L(\phi_n)\|+\|\phi_n\|)\\
&\leq C(\|\nabla\mu_n\|+\|L(\phi_n)\|+1)\\
&\leq C(\|\nabla\mu_n\|+1),
\end{align*}
which together with \eqref{app-e5} implies
\begin{align*}
\int_0^t \|\phi_n(\tau)\|_{H^4}^2\,\mathrm{d}\tau \leq C,\quad  \forall\, t\in (0,T].
\end{align*}
Since $\omega_n=-\Delta\phi_n+f(\phi_n)$, the above estimate and \eqref{app-e5} also yield
\begin{align*}
\int_0^t \|\omega_n(\tau)\|_{H^2}^2\,\mathrm{d}\tau \leq C,\quad  \forall\, t\in (0,T].
\end{align*}

\textbf{Proof of Theorem \ref{thm:weak}}. Collecting the above estimates, we see that for any given $T\in (0,+\infty)$,
\begin{align*}
        \uu_n & \text{ is  bounded in } L^\infty(0,T;\mathbf{H}_\sigma) \cap L^2(0,T;\mathbf{V}_\sigma) \cap W^{1, \frac{4}{3}}(0,T;\mathbf{V}_\sigma^*), \\
		\phi_n & \text{ is  bounded in } L^\infty(0,T;H^2(\Omega)) \cap L^2(0,T;H^4(\Omega))\cap H^1(0,T;(H^1(\Omega))^*), \\
		\mu_n & \text{ is bounded in } L^2(0,T;H^1(\Omega)),  \\
		\omega_n & \text{ is bounded in } L^\infty(0,T;L^2(\Omega))\cap L^2(0,T;H^2(\Omega)),
\end{align*}
uniformly with respect to the parameter $n$, provided that $n > n^*$. These estimates are sufficient for us to find a convergent subsequence of the approximate solutions $(\uu_n, \phi_{n}, \mu_{n}, \omega_{n})$ as $n\to+\infty$, whose limit denoted by $(\uu, \phi, \mu, \omega)$ satisfies the following regularity properties (and thus the boundary conditions)
\begin{align*}
    &    \uu \in L^\infty(0,T;\mathbf{H}_\sigma) \cap L^2(0,T;\mathbf{V}_\sigma) \cap W^{1, \frac{4}{3}}(0,T;\mathbf{V}_\sigma^*), \\
	&	\phi \in L^\infty(0,T;H^2_N(\Omega))\cap L^2(0,T;H^4_N(\Omega)) \cap H^1(0,T;(H^1(\Omega))^*), \\
	&	\mu\in L^2(0,T;H^1(\Omega)),  \\
	&	\omega \in  L^\infty(0,T;L^2(\Omega))\cap L^2(0,T;H^2_N(\Omega)),		
\end{align*}
such that
\begin{align*}
        & \uu_n \rightharpoonup \uu\ \ \text{ weakly star in } L^\infty(0,T;\mathbf{H}_\sigma)\ \text{and weakly in }  L^2(0,T;\mathbf{V}_\sigma),\\
        &\partial_t \uu_n \rightharpoonup \partial_t\uu\ \ \text{weakly in } L^\frac{4}{3}(0,T;\mathbf{V}_\sigma^*), \\
		&\phi_n \rightharpoonup \phi\ \  \text{ weakly star in }  L^\infty(0,T;H^2(\Omega)) \ \text{and weakly in }   L^2(0,T;H^4(\Omega)),\\
        &\partial_t\phi_n \rightharpoonup \partial_t\phi \ \ \text{weakly in }  L^2(0,T;(H^1(\Omega))^*), \\
		&\mu_n \rightharpoonup \mu \ \ \text{weakly in }  L^2(0,T;H^1(\Omega)),  \\
		&\omega_n\rightharpoonup \omega \ \ \text{weakly star in }  L^\infty(0,T;L^2(\Omega)) \ \text{and weakly in } L^2(0,T;H^2(\Omega)).
\end{align*}
Moreover, by the well-known Aubin--Lions lemma, we can conclude (again up to a subsequence)
\begin{align*}
& \uu_n \to \uu\ \ \text{strongly in } L^2(0,T;\mathbf{H}^{1-r}(\Omega)),\\
& \phi_n \to \phi\ \ \text{strongly in } C([0,T];H^{2-r}(\Omega))\cap    L^2(0,T;H^{4-r}(\Omega)),
\end{align*}
for any $r\in (0,\frac{1}{2})$. The above convergence results enable us to verify that the limit function $(\uu, \phi, \mu, \omega)$ satisfies the weak formulations \eqref{weak1}--\eqref{weak2} on $[0,T]$, and
\begin{align*}
&\mu = -\Delta \omega + f'(\phi)\omega + \eta \omega,\quad   \omega = -\Delta \phi +f(\phi),
\end{align*}
a.e. in $\Omega\times(0,T)$. Besides, the initial conditions are satisfied (using e.g., \cite[Lemma 3.1.7]{Zh}). To derive the strong energy inequality \eqref{lowene}, we can pass to the limit as $n\to+\infty$ in
\begin{align}
&\frac{1}{2}\|\uu_n(t)\|^2+E(\phi_n(t))+\int_0^t \int_\Omega \big(2\nu(\phi_n(\tau))|D\uu_n(\tau)|^2 +m(\phi_n(\tau)) |\nabla \mu_n(\tau)|^2\big)\,\mathrm{d}x \mathrm{d}\tau\notag\\
&\quad = \frac{1}{2}\|\uu_n(s)\|^2+E(\phi_n(s)),
\notag
\end{align}
for almost all $s\geq 0$ including $s=0$ and all $t\geq s$, by taking the available weak/strong subsequent convergence results into account and using the weak lower semicontinuity of norms. Since the compactness argument is standard (see, for instance, \cite{B99} for the Navier--Stokes--Cahn--Hilliard system, \cite{DLP19} for the functionalized Cahn--Hilliard equation and \cite{DLL07,CG19} for some different type of vesicle-fluid interaction models), we omit the details here.

We proceed to derive some higher-order spatial estimates for $\omega$ and $\phi$.
Consider the Neumann problem for $\omega$
$$
\begin{cases}
-\Delta \omega= \mu-f'(\phi)\omega-\eta\omega,\quad \text{in}\ \Omega,\\
\partial_\mathbf{n}\omega=0, \qquad\qquad\qquad\qquad\  \text{on}\ \partial\Omega.
\end{cases}
$$
We infer from the elliptic estimate that
\begin{align*}
\|\omega\|_{H^3}& \leq C(\| \mu-f'(\phi)\omega-\eta\omega\|_{H^1}+\|\omega\|)\notag\\
& \leq C\big(\|\mu\|_{H^1}+(\|f'(\phi)\|_{L^\infty}+1)\|\omega\|_{H^1}+ \|f''(\phi)\|_{L^\infty}\|\nabla \phi\|_{\mathbf{L}^3}\|\omega\|_{L^6}\big).
\end{align*}
As a consequence, it holds
\begin{align}
\int_0^t \|\omega(\tau)\|_{H^3}^2\,\mathrm{d}\tau \leq C,\quad  \forall\, t\in (0,T],\label{app-e11}
\end{align}
where the constant $C>0$ depends on $\|\uu_0\|$, $\|\phi_0\|_{H^2}$, $\eta$, $\Omega$ and $T$. Applying the elliptic estimate for $\phi$, which satisfies the Neumann problem $-\Delta \phi=w-f(\phi)$ in $\Omega$ and $\partial_\mathbf{n}\phi=0$ on $\partial\Omega$, we find that
\begin{align*}
\|\phi\|_{H^5}
&\leq C(\|\omega-f(\phi)\|_{H^3}+\|\phi\|)\\
&\leq C\|\omega\|_{H^3} +C(\|\phi\|_{L^\infty}^2\|\phi\|_{H^3}+\|\phi\|_{H^3}) \\
&\leq C(\|\omega\|_{H^3}+\|\phi\|_{H^3}),
\end{align*}
where we have used the estimate on product of functions in three dimensions. The above estimate combined with \eqref{app-e11} and the fact $\phi\in L^2(0,T;H^4(\Omega))$ yield that
\begin{align}
\int_0^t \|\phi(\tau)\|_{H^5}^2\,\mathrm{d}\tau \leq C,\quad  \forall\, t\in (0,T],\label{app-e12}
\end{align}
where the constant $C>0$ depends on $\|\uu_0\|$, $\|\phi_0\|_{H^2}$, $\eta$, $\Omega$ and $T$.

The proof of Theorem \ref{thm:weak} is complete. \hfill $\square$

\subsection{Conditional uniqueness}
In this subsection, we prove Theorem \ref{thm:weakuni} on the uniqueness of weak solutions. Let us consider two weak solutions $(\uu_i,\phi_i,\mu_i,\omega_i)$, $i=1,2$, to problem  \eqref{NS}--\eqref{IC} defined on $[0,T]$, corresponding to the same initial data $(\uu_0,\phi_0)$. Set the difference of solutions by
$$
\widehat{\uu}=\uu_1-\uu_2,\quad \widehat{\phi}=\phi_1-\phi_2,\quad \widehat{\mu}=\mu_1-\mu_2,\quad \widehat{\omega}=\omega_1-\omega_2, \quad \widehat{P}=P_1-P_2,
$$
where $P_i$ is the pressure that vanishes in the weak formulation for $\uu_i$. Thanks to the property of mass conservation, we find that $\int_\Omega \widehat{\phi}(t)\,\mathrm{d}x=0 $ for $t\in [0,T]$. Hence, the following elliptic estimates hold
\begin{equation}
\|\widehat{\phi}\|_{H^2}\leq C\|\Delta\widehat{\phi}\|,\quad
\|\widehat{\phi}\|_{H^4}\leq C\|\Delta^2\widehat{\phi}\|,
\label{ellip}
\end{equation}
where the constant $C$ only depends on $\Omega$. As a consequence, the functional
$$
\mathcal{H}(t)=\frac12\|\widehat{\uu}(t)\|^2+\frac12\|\Delta\widehat{\phi}(t)\|^2
$$
indeed measures the distance between solutions $(\uu_1,\phi_2)$ and $(\uu_2,\phi_2)$ in $\mathbf{L}^2(\Omega)\times H^2(\Omega)$.

For the sake of convenience, we formally write down the system satisfied by the difference $(\widehat{\uu}, \widehat{\phi},\widehat{\mu},\widehat{\omega}, \widehat{P})$ in the strong form:
\begin{align}
  &\partial_t \widehat{\uu}+(\widehat{\uu}\cdot\nabla) \uu_1
  + (\uu_2\cdot\nabla)\widehat{\uu}
  -\nabla \cdot (2\nu(\phi_1)D \widehat{\uu}) +\nabla \widehat{P} \notag \\
  &\quad = \nabla \cdot (2(\nu(\phi_1)-\nu(\phi_2))D \uu_2) + \widehat{\mu}\nabla \phi_1 + \mu_2\nabla\widehat{\phi},
  \label{NSd}\\
  &\nabla\cdot \widehat{\uu} =0, \label{incomd}\\
  &\widehat{\phi}_t+\widehat{\uu}\cdot\nabla\phi_1
  + \uu_2\cdot\nabla \widehat{\phi}
  = \nabla \cdot(m(\phi_1)\nabla \widehat{\mu})
  + \nabla \cdot((m(\phi_1)-m(\phi_2))\nabla \mu_2),  \label{phasefieldd}\\
  &\widehat{\mu}=- \Delta \widehat{\omega} + f'(\phi_1) \widehat{\omega} + (f'(\phi_1)-f'(\phi_2))\omega_2  + \eta \widehat{\omega}, \label{pot1d}\\
  &\widehat{\omega}= - \Delta \widehat{\phi} + f(\phi_1)-f(\phi_2), \label{pot2d}
\end{align}
in $\Omega\times(0,T)$, subject to the boundary and initial conditions
\begin{align}
&\widehat{\uu}=\mathbf{0},\quad {\partial}_{\mathbf{n}}\widehat{\phi}
  ={\partial}_{\mathbf{n}}\Delta\widehat{\phi}= {\partial}_{\mathbf{n}}\widehat{\mu}=0,\qquad\qquad &\textrm{on}& \   \partial\Omega\times(0,T),\label{boundaryd}\\
  &\uu|_{t=0}=\mathbf{0},\ \ \phi|_{t=0}=0,\qquad &\textrm{in}&\ \Omega.
\label{ICd}
\end{align}
As a preliminary step, using the expressions \eqref{pot1d} and \eqref{pot2d}, we reformulate the difference of chemical potentials $\widehat{\mu}$ as follows
\begin{align*}
\widehat{\mu} & =\Delta^2 \widehat{\phi} - \Delta(f(\phi_1)-f(\phi_2)) - (f'(\phi_1)+\eta) \Delta\widehat{\phi} + (f'(\phi_1)+\eta)(f(\phi_1)-f(\phi_2))\\
&\quad + (f'(\phi_1)-f'(\phi_2))\omega_2,\\
&=: \Delta^2 \widehat{\phi} +G(\phi_1,\phi_2).
\end{align*}
Using the uniform bounds for weak solutions $\phi_1$, $\phi_2$ in $L^\infty(0,T;H^2(\Omega))$, we derive an estimate on the nonlinear term $G(\phi_1,\phi_2)$.
\begin{lemma}\label{G}
The nonlinear term $G(\phi_1,\phi_2)$ satisfies the following estimates:
\begin{align}
&\|G(\phi_1,\phi_2)\|\leq C\|\Delta\widehat{\phi}\|,\label{GG1}\\
&\|\nabla G(\phi_1,\phi_2)\|\leq C\|\nabla \Delta \widehat{\phi}\|+C(1+\|\phi_1\|_{H^3}+\|\phi_2\|_{H^3})\|\Delta \widehat{\phi}\|,\label{GG2}
\end{align}
for a.a. $t\in [0,T]$, where the constant $C>0$ depends on $\|\uu_0\|$, $\|\phi_0\|_{H^2}$, $\eta$ and $\Omega$.
\end{lemma}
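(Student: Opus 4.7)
Both estimates will follow by expanding the four pieces that make up
\[
G = -\Delta\bigl(f(\phi_1)-f(\phi_2)\bigr) - (f'(\phi_1)+\eta)\Delta\widehat\phi + (f'(\phi_1)+\eta)\bigl(f(\phi_1)-f(\phi_2)\bigr) + \bigl(f'(\phi_1)-f'(\phi_2)\bigr)\omega_2,
\]
and exploiting the polynomial structure $f(s)=s^3-s$, which in particular gives the factorizations $f(\phi_1)-f(\phi_2) = [\phi_1^2+\phi_1\phi_2+\phi_2^2-1]\widehat\phi$ and $f'(\phi_1)-f'(\phi_2) = 3(\phi_1+\phi_2)\widehat\phi$. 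The two standing ingredients are: the uniform bound $\phi_1,\phi_2\in L^\infty(0,T;H^2(\Omega))$ from Theorem~\ref{thm:weak} combined with the 3D embeddings $H^2\hookrightarrow L^\infty\cap W^{1,4}$, which render every polynomial in $\phi_1,\phi_2$ and every product of the type $\nabla\phi_i\cdot\nabla\phi_j$ uniformly controlled in a suitable $L^p$; and the Poincar\'e-type estimate $\|\widehat\phi\|_{H^2}\le C\|\Delta\widehat\phi\|$ from \eqref{ellip} (which holds because $\widehat\phi$ has zero mean and satisfies a homogeneous Neumann boundary condition), allowing any norm of $\widehat\phi$ up to order two to be absorbed into $\|\Delta\widehat\phi\|$.

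\textbf{Estimate \eqref{GG1}.} The chain rule decomposes $\Delta(f(\phi_1)-f(\phi_2))$ into terms of the schematic form $\phi_1^2\Delta\widehat\phi$, $(\phi_1+\phi_2)\widehat\phi\,\Delta\phi_j$, $\phi_i\nabla\phi_j\cdot\nabla\widehat\phi$, $\widehat\phi\,|\nabla\phi_j|^2$, and $\Delta\widehat\phi$. Placing the polynomial factor in $L^\infty$, one $\widehat\phi$-derivative in $L^6$ or $L^\infty$, and the remaining $\phi_j$-derivative in $L^4$ or $L^2$, every piece is bounded by $C\|\widehat\phi\|_{H^2}\le C\|\Delta\widehat\phi\|$. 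The two quadratic terms $(f'(\phi_1)+\eta)\Delta\widehat\phi$ and $(f'(\phi_1)+\eta)(f(\phi_1)-f(\phi_2))$ are immediate by the same mechanism. The cross term $(f'(\phi_1)-f'(\phi_2))\omega_2 = 3(\phi_1+\phi_2)\widehat\phi\,\omega_2$ is controlled by $\|\widehat\phi\|_{L^\infty}\|\omega_2\|$, which is finite thanks to the uniform bound $\omega_2\in L^\infty(0,T;L^2)$ from \eqref{reg-ome}.

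\textbf{Estimate \eqref{GG2} and main obstacle.} Taking one more spatial derivative of $G$ produces a single top-order contribution of the schematic form $3\phi_1^2\nabla\Delta\widehat\phi$, immediately bounded by $C\|\nabla\Delta\widehat\phi\|$ using $H^2\hookrightarrow L^\infty$. All other contributions carry at most two derivatives of $\widehat\phi$ (placed in $L^2$, $L^3$, $L^6$, or $L^\infty$ via $H^2\hookrightarrow W^{1,6}\cap L^\infty$) multiplied by at most three derivatives of $\phi_i$ (bounded through the 3D embeddings $H^3\hookrightarrow W^{1,\infty}\cap W^{2,6}$), which is exactly what yields the factor $1+\|\phi_1\|_{H^3}+\|\phi_2\|_{H^3}$ multiplying $\|\Delta\widehat\phi\|$. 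The only genuinely delicate point — and the main obstacle — is the $\omega_2$-piece: after differentiation it involves $\nabla\omega_2$, which the weak-solution class does not directly control. One recovers it by returning to the defining identity $\omega_2=-\Delta\phi_2+f(\phi_2)$ and using $H^2\hookrightarrow L^\infty$ to estimate $\|\omega_2\|_{H^1}\le C(1+\|\phi_2\|_{H^3})$. This is precisely why the factor $\|\phi_i\|_{H^3}$ appears on the right-hand side of \eqref{GG2} but not of \eqref{GG1}.
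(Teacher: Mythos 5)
Your proposal is correct and follows essentially the same route as the paper: the same four-term decomposition of $G$, the same polynomial factorizations of $f(\phi_1)-f(\phi_2)$ and $f'(\phi_1)-f'(\phi_2)$, H\"older plus the embeddings of $H^2(\Omega)$ and $H^3(\Omega)$ together with the elliptic estimate $\|\widehat{\phi}\|_{H^2}\le C\|\Delta\widehat{\phi}\|$, and the same resolution of the $\nabla\omega_2$ term via $\|\omega_2\|_{H^1}\le C(1+\|\phi_2\|_{H^3})$ from the identity $\omega_2=-\Delta\phi_2+f(\phi_2)$. The only cosmetic difference is that the paper packages the third-order term through the product estimate $\|(\phi_1^2+\phi_1\phi_2+\phi_2^2)\widehat{\phi}\|_{H^3}\le C(\|\cdot\|_{L^\infty}\|\cdot\|_{H^3}+\|\cdot\|_{H^3}\|\cdot\|_{L^\infty})$ rather than expanding derivative by derivative, which does not change the substance.
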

\begin{proof}
By the definition of $G(\phi_1,\phi_2)$, we get
\begin{align}
\|G(\phi_1,\phi_2)\|&\leq \|\Delta(f(\phi_1)-f(\phi_2))\| +\|(f'(\phi_1)+\eta) \Delta\widehat{\phi}\| + \|(f'(\phi_1)+\eta)(f(\phi_1)-f(\phi_2))\|\notag \\
&\quad + \|(f'(\phi_1)-f'(\phi_2))\omega_2\|.\notag
\end{align}
Using the expression of $f$ and the $L_t^\infty H^2_x$-estimates for $\phi_1$, $\phi_2$, the terms on the right-hand side can be estimated by
\begin{align*}
&\|\Delta(f(\phi_1)-f(\phi_2))\|\notag\\
&\quad \leq \|3\phi_1^2\Delta\phi_1-3\phi_2^2\Delta\phi_2\|
+\|6\phi_1|\nabla \phi_1|^2- 6\phi_2|\nabla \phi_2|^2\|+\|\Delta\widehat{\phi}\|\notag\\
&\quad \leq C\|\phi_1\|^2_{L^\infty}\|\Delta \widehat{\phi}\|+C(\|\phi_1\|_{L^\infty}+\|\phi_2\|_{L^\infty})
\|\Delta\phi_2\|\|\widehat{\phi}\|_{L^\infty}\notag\\
&\qquad +C\|\nabla \phi_1\|_{\mathbf{L}^4}^2\|\widehat{\phi}\|_{L^\infty} + C\|\phi_2\|_{L^\infty}(\|\nabla \phi_1\|_{\mathbf{L}^3}+\|\nabla \phi_2\|_{\mathbf{L}^3})\|\nabla \widehat{\phi}\|_{\mathbf{L}^6}+\|\Delta\widehat{\phi}\|\notag\\
&\quad \leq C\|\Delta \widehat{\phi}\|,
\end{align*}
\begin{align*}
\|(f'(\phi_1)+\eta) \Delta\widehat{\phi}\|\leq \|f'(\phi_1)+\eta\|_{L^\infty}\|\Delta\widehat{\phi}\|\leq C\|\Delta\widehat{\phi}\|,
\end{align*}
and
\begin{align*}
&\|(f'(\phi_1)+\eta)(f(\phi_1)-f(\phi_2))\|+  \|(f'(\phi_1)-f'(\phi_2))\omega_2\|\notag\\
&\quad \leq \|f'(\phi_1)+\eta\|_{L^\infty}\|\phi_1^2+\phi_1\phi_2+\phi_2^2+1\|_{L^\infty}
\|\widehat{\phi}\|+ 3\|\phi_1+\phi_2\|_{L^\infty}\|\omega_2\|\|\widehat{\phi}\|_{L^\infty}\notag\\
&\quad \leq C\|\Delta\widehat{\phi}\|.
\end{align*}
Collecting the above estimates, we conclude that
\begin{align}
\|G(\phi_1,\phi_2)\|\leq C\|\Delta\widehat{\phi}\|.\notag
\end{align}
Next, we estimate the $L^2$-norm of $\nabla G(\phi_1,\phi_2)$ in the following way
\begin{align*}
\|\nabla G(\phi_1,\phi_2)\|
&\leq \|\nabla \Delta(f(\phi_1)-f(\phi_2))\| +\|f''(\phi_1)\nabla \phi_1 \Delta\widehat{\phi}\|
+\|(f'(\phi_1)+\eta)\nabla \Delta\widehat{\phi}\| \notag\\
&\quad +\|f''(\phi_1)\nabla \phi_1 (f(\phi_1)-f(\phi_2))\|
+\|(f'(\phi_1)+\eta)\nabla (f(\phi_1)-f(\phi_2))\|\notag\\
&\quad +\|\nabla(f'(\phi_1)-f'(\phi_2))\omega_2\|
+\|(f'(\phi_1)-f'(\phi_2))\nabla \omega_2\|.
\end{align*}
The terms on the right-hand side can be estimated as follows
\begin{align}
& \|\nabla \Delta(f(\phi_1)-f(\phi_2))\|\notag\\
& \quad \leq
\|(\phi_1^2+\phi_1\phi_2+\phi_2^2)\widehat{\phi}\|_{H^3}
+\|\nabla \Delta \widehat{\phi}\|\notag\\
&\quad \leq C \|\phi_1^2+\phi_1\phi_2+\phi_2^2\|_{H^3}\|\widehat{\phi}\|_{L^\infty}
+C\|\phi_1^2+\phi_1\phi_2+\phi_2^2\|_{L^\infty}\|\nabla \Delta \widehat{\phi}\|+ \|\nabla \Delta \widehat{\phi}\|\notag\\
&\quad \leq C\|\nabla \Delta \widehat{\phi}\|+ C(\|\phi_1\|_{H^3}+\|\phi_2\|_{H^3})\|\Delta \widehat{\phi}\|,\notag
\end{align}
\begin{align*}
&\|f''(\phi_1)\nabla \phi_1 \Delta\widehat{\phi}\|\leq \|f''(\phi_1)\|_{L^\infty}\|\nabla \phi_1\|_{\mathbf{L}^3} \|\Delta\widehat{\phi}\|_{L^6}
\leq C(\|\nabla \Delta\widehat{\phi}\|+\|\Delta\widehat{\phi}\|),
\end{align*}
$$
\|(f'(\phi_1)+\eta)\nabla \Delta\widehat{\phi}\| \leq \|f'(\phi_1)+\eta\|_{L^\infty}\|\nabla \Delta\widehat{\phi}\|\leq C\|\nabla \Delta\widehat{\phi}\|,
$$
\begin{align*}
&\|f''(\phi_1)\nabla \phi_1 (f(\phi_1)-f(\phi_2))\|
+\|(f'(\phi_1)+\eta)\nabla (f(\phi_1)-f(\phi_2))\|\notag\\
&\quad \leq \|f''(\phi_1)\|_{L^\infty}\|\nabla \phi_1\|_{\mathbf{L}^3}\|f(\phi_1)-f(\phi_2)\|_{L^6}
+ \|f'(\phi_1)+\eta\|_{L^\infty}\|\nabla (f(\phi_1)-f(\phi_2))\|\notag\\
&\quad \leq  C\|\Delta\widehat{\phi}\|,
\end{align*}
\begin{align*}
\|\nabla(f'(\phi_1)-f'(\phi_2))\omega_2\|&\leq
\|\nabla(f'(\phi_1)-f'(\phi_2))\|_{\mathbf{L}^3}\|\omega_2\|_{L^6}\\
&\leq C(\|\nabla \widehat{\phi}\|_{\mathbf{L}^3}+\|\widehat{\phi}\|_{L^\infty})\|\omega_2\|_{H^1}\\
&\leq C\|\omega_2\|_{H^1}\|\Delta\widehat{\phi}\|,
\end{align*}
and
\begin{align*}
\|(f'(\phi_1)-f'(\phi_2))\nabla \omega_2\|\leq \|f'(\phi_1)-f'(\phi_2)\|_{L^\infty}\|\nabla \omega_2\|
\leq C\|\nabla \omega_2\|\|\Delta\widehat{\phi}\|.
\end{align*}
Collecting the above estimates, we can deduce that
$$
\|\nabla G(\phi_1,\phi_2)\|\leq C\|\nabla \Delta \widehat{\phi}\|+C(1+\|\phi_1\|_{H^3}+\|\phi_2\|_{H^3}+\|\omega_2\|_{H^1})\|\Delta \widehat{\phi}\|.
$$
Noticing that
$$
\|\omega_2\|_{H^1}\leq \|\Delta \phi_2\|_{H^1}+\|f(\phi_2)\|_{H^1}\leq C(\|\phi_2\|_{H^3}+1),
$$
we then conclude \eqref{GG2}.

The proof is complete.
\end{proof}

\textbf{Proof of Theorem \ref{thm:weakuni}}.
Since $(\uu_1,\phi_1)$ and $(\uu_2,\phi_2)$ are two weak solutions in the sense of Definition \ref{def:solution}, both of them satisfy the energy inequality:
\be \mathcal{E}(\uu_i(t),\phi_i(t)) +\int_0^{t}\int_\Omega \left(2\nu(\phi_i)|D \uu_i(\tau)|^2 + m(\phi_i)|\nabla \mu_i(\tau)|^2\right)\,\mathrm{d}x\mathrm{d}\tau \leq
\mathcal{E}(\uu_0,\phi_0),
\label{lowenea}
\ee
for any $t\in [0,T]$, $i=1,2$.

\begin{lemma}\label{regut}
Let one of the conditions $\mathrm{(a)}$, $\mathrm{(b)}$ or $\mathrm{(c)}$ be satisfied.
We introduce the spaces
\begin{align*}
&\mathbf{X}=L^\infty(0,T;\mathbf{H}_\sigma)\cap L^2(0,T;\mathbf{V}_\sigma),\\
&\mathbf{Y}_{\mathrm{a}}= L^q(0,T;\mathbf{L}^p(\Omega)),\\
&\mathbf{Y}_{\mathrm{b}}= L^q(0,T;\mathbf{W}^{1,p}(\Omega)),\\
&\mathbf{Y}_{\mathrm{c}}= L^q(0,T;\mathbf{W}^{s,p}(\Omega)),\\
&\mathbf{Z}_{\mathrm{k}}=\mathbf{X} \cap \mathbf{Y}_{\mathrm{k}},\quad \mathrm{k}\in \{\mathrm{a}, \mathrm{b}, \mathrm{c}\}.
\end{align*}
with corresponding choices of parameters. Then for $\mathrm{k}\in \{\mathrm{a}, \mathrm{b}, \mathrm{c}\}$, we have
\begin{align*}
& \partial_t\uu_1\in \mathbf{X}^*= (L^\infty(0,T;\mathbf{H}_\sigma))^* + L^2(0,T;\mathbf{V}_\sigma^*),\\
& \partial_t\uu_2\in \mathbf{Z}_{\mathrm{k}}^*= (L^\infty(0,T;\mathbf{H}_\sigma))^* + L^2(0,T;\mathbf{V}_\sigma^*)+ \mathbf{Y}_\mathrm{k}^*.
\end{align*}
\end{lemma}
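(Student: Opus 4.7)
I would read off $\partial_t\uu_i$ from the weak formulation \eqref{weak1}, which gives, for every $\mathbf{v}\in\mathbf{V}_\sigma$,
\[
\langle\partial_t\uu_i,\mathbf{v}\rangle = -((\uu_i\cdot\nabla)\uu_i,\mathbf{v}) - (2\nu(\phi_i)D\uu_i,\nabla\mathbf{v}) + (\mu_i\nabla\phi_i,\mathbf{v}),
\]
and then bound each right-hand term in the appropriate dual space. The viscous and Korteweg contributions are controlled independently of any additional regularity of $\uu_i$ and will belong to $L^2(0,T;\mathbf{V}_\sigma^*)$ for both indices $i=1,2$; all the asymmetry between $\uu_1$ and $\uu_2$ comes from the convective term.

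For the viscous term, $\phi_i\in L^\infty(0,T;H^2)\hookrightarrow L^\infty_tL^\infty_x$ together with $\nu\in C^2$ makes $\nu(\phi_i)$ uniformly bounded, so $2\nu(\phi_i)D\uu_i\in L^2(0,T;\mathbf{L}^2)$ and its divergence lies in $L^2(0,T;\mathbf{V}_\sigma^*)$. For the Korteweg term, Hölder together with the embeddings $H^1\hookrightarrow L^6$ and $H^2\hookrightarrow W^{1,3}$ yields $\|\mu_i\nabla\phi_i\|\le C\|\mu_i\|_{H^1}\|\phi_i\|_{H^2}$, so $\mu_i\nabla\phi_i\in L^2(0,T;\mathbf{L}^2)\hookrightarrow L^2(0,T;\mathbf{V}_\sigma^*)$. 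Both of these already sit in $\mathbf{X}^*\cap\mathbf{Z}_\mathrm{k}^*$, so the only thing to analyse is the convective term $(\uu_i\cdot\nabla)\uu_i$.

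For $\uu_1$ I would exploit the Serrin-type extra regularity. In case (a), the Gagliardo--Nirenberg interpolation $\|\mathbf{v}\|_{\mathbf{L}^{2p/(p-2)}}\le C\|\mathbf{v}\|^{1-3/p}\|\nabla\mathbf{v}\|^{3/p}$ gives
\[
|((\uu_1\cdot\nabla)\uu_1,\mathbf{v})|\le C\|\uu_1\|_{\mathbf{L}^p}\|\nabla\uu_1\|\,\|\mathbf{v}\|^{1-3/p}\|\nabla\mathbf{v}\|^{3/p},
\]
and the Serrin identity $2/q+3/p=1$ makes the time Hölder exponents $(q,2,\infty,2p/3)$ sum to one, so the right-hand side is bounded by $C\|\mathbf{v}\|_{\mathbf{X}}$. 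Case (b) is handled analogously by the symmetric splitting $|((\uu_1\cdot\nabla)\uu_1,\mathbf{v})|\le\|\uu_1\|_{\mathbf{L}^{2p/(p-1)}}\|\nabla\uu_1\|_{\mathbf{L}^p}\|\mathbf{v}\|_{\mathbf{L}^{2p/(p-1)}}$ together with the corresponding interpolation and $2/q+3/p=2$; case (c) reduces to (a) via the Sobolev embedding $\mathbf{W}^{s,p}\hookrightarrow\mathbf{L}^r$ with $1/r=1/p-s/3$ when $sp<3$, and to (b) by integration by parts transferring a derivative to $\mathbf{v}$ when $sp\ge 3$. In every case $(\uu_1\cdot\nabla)\uu_1\in\mathbf{X}^*$, hence $\partial_t\uu_1\in\mathbf{X}^*$.

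For $\uu_2$ only the basic Leray--Hopf regularity is available, so $(\uu_2\cdot\nabla)\uu_2$ belongs a priori only to $L^{4/3}(0,T;\mathbf{V}_\sigma^*)$ and need not lie in $\mathbf{X}^*$. The remedy is to dualize against $\mathbf{v}\in\mathbf{Z}_\mathrm{k}=\mathbf{X}\cap\mathbf{Y}_\mathrm{k}$, shifting the missing integrability onto $\mathbf{v}$: in case (a), interpolating $L^\infty(\mathbf{L}^2)\cap L^2(\mathbf{L}^6)$ places $\uu_2\in L^{2p/3}(0,T;\mathbf{L}^{2p/(p-2)})$, and
\[
|((\uu_2\cdot\nabla)\uu_2,\mathbf{v})|\le \|\uu_2\|_{\mathbf{L}^{2p/(p-2)}}\|\nabla\uu_2\|\,\|\mathbf{v}\|_{\mathbf{L}^p}
\]
is time-integrable since $(2p/3,2,q)$ sum to one by Serrin; cases (b) and (c) are analogous. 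Thus $(\uu_2\cdot\nabla)\uu_2\in\mathbf{Y}_\mathrm{k}^*$, and so $\partial_t\uu_2\in\mathbf{Z}_\mathrm{k}^*$. The main technical obstacle is the Hölder--Sobolev exponent bookkeeping in the three Serrin regimes, particularly case (c) with fractional $s$, where one must separate $sp<3$, $sp=3$ and $sp>3$; in each regime however the Serrin-type identity is precisely what makes the time scaling close.
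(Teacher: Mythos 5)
Your proposal follows essentially the same route as the paper: read off $\partial_t\uu_i$ from the weak formulation, observe that the viscous and Korteweg terms land in $L^2(0,T;\mathbf{V}_\sigma^*)$ without any extra hypothesis, and treat the convective terms asymmetrically --- using the Serrin-type regularity of $\uu_1$ when testing against $\mathbf{X}$, and shifting the missing integrability onto the test function in $\mathbf{Y}_{\mathrm{k}}$ for $\uu_2$. The interpolation inequalities you invoke in cases (a) and (b) are exactly those of the paper (the only cosmetic difference is that in case (a) the paper first integrates by parts and interpolates $\uu_1$ in $\mathbf{L}^{2p/(p-2)}$, whereas you interpolate $\mathbf{v}$; both close the H\"older exponents correctly).

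The one point where you diverge is case (c). The paper does not prove it: it simply cites Lemma~1 of Ribaud \cite{Ri02}. Your proposed self-contained reduction --- Sobolev embedding $\mathbf{W}^{s,p}\hookrightarrow\mathbf{L}^r$ with $\frac1r=\frac1p-\frac{s}{3}$ when $sp<3$ (which indeed recovers $\frac2q+\frac3r=1$, i.e.\ case (a)), and ``transferring a derivative to $\mathbf{v}$'' when $sp\ge 3$ --- does not cover the regime $0<s<1$ with $sp\ge 3$ (e.g.\ $s=\tfrac12$, $p=12$, $q=\tfrac85$): there is no whole derivative to transfer, and the embedding into $\mathbf{L}^\infty$ gives a time exponent $q<2$ that is too weak for case (a). Handling this regime requires the genuinely fractional duality/paraproduct argument of \cite{Ri02}. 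Either supply that argument or, as the paper does, cite the reference; as written, your case (c) is incomplete while cases (a) and (b) are sound.
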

\begin{proof}
We first observe that
\begin{align*}
\left|\int_0^T\! \int_\Omega \mu_i(t)\nabla \phi_i(t)\cdot \mathbf{v}(t)\,\mathrm{d} x\mathrm{d}t\right|
&\leq \int_0^T\|\mu_i(t)\|_{L^3}\|\nabla \phi_i(t)\|\|\mathbf{v}(t)\|_{\mathbf{L}^6}\,\mathrm{d}t\\
&\leq \|\phi_i\|_{L^\infty(0,T;H^1(\Omega))}\|\mu_i\|_{L^2(0,T;H^1(\Omega))}\|\mathbf{v}\|_{L^2(0,T;\mathbf{V}_\sigma)},
\end{align*}
for any function $\mathbf{v} \in L^2(0,T;\mathbf{V}_\sigma)$, $i=1,2$. Keeping the above estimate for Korteweg forces in mind, the conclusion under condition $\mathrm{(c)}$ is a direct consequence of \cite[Lemma 1]{Ri02}. Concerning condition $\mathrm{(a)}$, the conclusion is an easy consequence of H\"{o}lder's inequality and the interpolation inequality $\|\uu_1\|_{\mathbf{L}^\frac{2p}{p-2}}\leq C\|\nabla \uu_1\|^{\frac{3}{p}} \|\uu_1\|^{1-\frac{3}{p}}$ (see \cite{Se62}). For the sake of completeness, we sketch the proof here.
For any $\mathbf{v} \in \mathbf{X}$, it follows that
\begin{align*}
&\left| \int_0^T\! \int_\Omega (\uu_1(t) \cdot \nabla) \uu_1 (t) \cdot \mathbf{v}(t)\,\mathrm{d}x \mathrm{d}t \right|\\
&\quad = \left|\int_0^T\int_\Omega (\uu_1(t)\cdot\nabla ) \mathbf{v}(t) \cdot \uu_1(t)\,\mathrm{d}x \mathrm{d}t \right| \\
&\quad \leq \int_0^T \|\uu_1(t)\|_{\mathbf{L}^\frac{2p}{p-2}}\|\uu_1(t)\|_{\mathbf{L}^p} \|\nabla \mathbf{v}(t)\|\,\mathrm{d}t\\
&\quad \leq C\int_0^T\|\nabla \uu_1(t)\|^{\frac{3}{p}} \|\uu_1(t)\|^{1-\frac{3}{p}}\|\uu_1(t)\|_{\mathbf{L}^p} \|\nabla \mathbf{v}(t)\| \,\mathrm{d}t\\
&\quad \leq C\|\uu_1\|_{L^\infty(0,T;\mathbf{H}_\sigma)}^{1-\frac{3}{p}} \|\nabla \uu_1\|_{L^2(0,T;\mathbf{L}^2(\Omega))}^\frac{3}{p}
\|\uu_1\|_{L^q(0,T;\mathbf{L}^p(\Omega))}\|\nabla \mathbf{v}\|_{L^2(0,T;\mathbf{L}^2(\Omega))},
\end{align*}
which yields that $\partial_t\uu_1\in \mathbf{X}^*$ by comparison in the weak formulation of $\uu_1$. In a similar manner, for any $\mathbf{v}\in \mathbf{Z}_{\mathrm{a}}$, we have
\begin{align*}
\left| \int_0^T \int_\Omega (\uu_2(t) \cdot \nabla) \uu_2(t) \cdot \mathbf{v}(t)\,\mathrm{d}x \mathrm{d}t \right|
& \leq \int_0^T \|\uu_2(t)\|_{\mathbf{L}^\frac{2p}{p-2}} \|\nabla \mathbf{u}_2(t)\|\|\mathbf{v}(t)\|_{\mathbf{L}^p}\,\mathrm{d}t\\
& \leq C\int_0^T\|\nabla \uu_2(t)\|^{1+\frac{3}{p}} \|\uu_2(t)\|^{1-\frac{3}{p}} \|\mathbf{v}(t)\|_{\mathbf{L}^p}  \,\mathrm{d}t\\
& \leq C\|\nabla \uu_2\|_{L^2(0,T;\mathbf{L}^2(\Omega))}^{1+\frac{3}{p}} \|\uu_2\|_{L^\infty(0,T;\mathbf{H}_\sigma)}^{1-\frac{3}{p}} \|\mathbf{v}\|_{L^q(0,T;\mathbf{L}^p(\Omega))},
\end{align*}
which then implies $\partial_t\uu_2\in \mathbf{Z}_{\mathrm{a}}^*$. Next, concerning condition $\mathrm{(b)}$, from the interpolation inequality $\|\uu\|_{\mathbf{L}^\frac{2p}{p-1}}\leq C\|\nabla \uu\|^{\frac{3}{2p}} \|\uu\|^{1-\frac{3}{2p}}$ we deduce that
\begin{align*}
&\left| \int_0^T\! \int_\Omega (\uu_1(t) \cdot \nabla) \uu_1 (t) \cdot \mathbf{v}(t)\,\mathrm{d}x \mathrm{d}t \right|\\
&\quad \leq \int_0^T \|\nabla \uu_1(t)\|_{\mathbf{L}^p} \|\uu_1(t)\|_{\mathbf{L}^\frac{2p}{p-1}} \| \mathbf{v}(t)\|_{\mathbf{L}^\frac{2p}{p-1}}\,\mathrm{d}t\\
&\quad \leq C\int_0^T \|\nabla \uu_1(t)\|_{\mathbf{L}^p}\|\nabla \uu_1(t)\|^{\frac{3}{2p}} \|\uu_1(t)\|^{1-\frac{3}{2p}}\|\nabla \mathbf{v}(t)\|^{\frac{3}{2p}} \|\mathbf{v}(t)\|^{1-\frac{3}{2p}}\,\mathrm{d}t\\
&\quad \leq C \|\uu_1\|_{L^\infty(0,T;\mathbf{H}_\sigma)}^{1-\frac{3}{2p}}\|\mathbf{v}\|_{L^\infty(0,T;\mathbf{H}_\sigma)}^{1-\frac{3}{2p}}
\|\nabla \uu_1\|^{\frac{3}{2p}}_{L^2(0,T;\mathbf{L}^2(\Omega))}\|\nabla \mathbf{v}\|^{\frac{3}{2p}}_{L^2(0,T;\mathbf{L}^2(\Omega))}\|\nabla \uu_1\|_{L^q(0,T;\mathbf{L}^p(\Omega))},
\end{align*}
for any $\mathbf{v} \in \mathbf{X}$, and
\begin{align*}
\left| \int_0^T \int_\Omega (\uu_2(t) \cdot \nabla) \uu_2(t) \cdot \mathbf{v}(t)\,\mathrm{d}x \mathrm{d}t \right|
& = \left| \int_0^T \int_\Omega (\uu_2(t) \cdot \nabla) \mathbf{v}(t) \cdot  \uu_2(t) \,\mathrm{d}x \mathrm{d}t \right|\\
& \leq \int_0^T \|\uu_2(t)\|^2_{\mathbf{L}^\frac{2p}{p-1}} \|\nabla \mathbf{v}(t)\|_{\mathbf{L}^p}\,\mathrm{d}t\\
& \leq C\int_0^T \|\nabla \uu_2(t)\|^{\frac{3}{p}} \|\uu_2(t)\|^{2-\frac{3}{p}} \|\nabla \mathbf{v}(t)\|_{\mathbf{L}^p}  \,\mathrm{d}t\\
& \leq C \|\uu_2\|_{L^\infty(0,T;\mathbf{H}_\sigma)}^{2-\frac{3}{p}} \|\nabla \uu_2\|_{L^2(0,T;\mathbf{L}^2(\Omega))}^{\frac{3}{p}} \|\nabla \mathbf{v}\|_{L^q(0,T;\mathbf{L}^p(\Omega))},
\end{align*}
for any $\mathbf{v}\in \mathbf{Z}_{\mathrm{b}}$. These estimates imply that $\partial_t\uu_1\in \mathbf{X}^*$ and  $\partial_t\uu_2\in \mathbf{Z}_{\mathrm{b}}^*$, respectively.
\end{proof}

Thanks to Lemma \ref{regut}, we can take $\uu_2$ as a test function in the weak formulation of $\uu_1$ and vice versa. Adding the resultants together and integrating with respect to time over any interval $[0,t]\subset [0,T]$, we obtain
\begin{align}
&(\uu_1(t),\uu_2(t)) - \|\uu_0\|^2\notag \\
&\quad =-\int_0^t\big((\uu_1(\tau) \cdot \nabla) \uu_1(\tau),\uu_2(\tau)\big)\,\mathrm{d} \tau
-\int_0^t\big((\uu_2(\tau) \cdot \nabla) \uu_2(\tau),\uu_1(\tau)\big)\,\mathrm{d} \tau \notag \\
&\qquad -\int_0^t\int_\Omega 2\nu(\phi_1(\tau))D\uu_1(\tau): \nabla \uu_2(\tau)\,\mathrm{d}x\mathrm{d}\tau
-\int_0^t\int_\Omega 2\nu(\phi_2(\tau))D\uu_2(\tau): \nabla \uu_1(\tau)\,\mathrm{d}x\mathrm{d}\tau\notag \\
&\qquad + \int_0^t\int_\Omega \mu_1(\tau)\nabla \phi_1(\tau) \cdot \uu_2(\tau) \,\mathrm{d}x\mathrm{d}\tau
+ \int_0^t\int_\Omega \mu_2(\tau)\nabla \phi_2(\tau) \cdot \uu_1(\tau) \,\mathrm{d}x\mathrm{d}\tau.
\label{diff-u1}
\end{align}
The procedure to derive the identity \eqref{diff-u1} can be made rigorous by using the mollification argument in \cite{Se62}, see also \cite{Ga00,Ri02}, namely, approximating $\uu_i$ by smooth solenoidal functions and then passing to the limit. Adding the energy inequalities \eqref{lowenea} for $i=1,2$ together and then subtracting the identity \eqref{diff-u1}, we arrive at
\begin{align}
&\frac12\|\uu(t)\|^2 + E(\phi_1(t))+ E(\phi_2(t)) + \int_0^t\int_\Omega 2\nu(\phi_1(\tau)) |D\widehat{\uu}(\tau)|^2\,\mathrm{d}x\mathrm{d}\tau  \notag\\
&\qquad + \int_0^{t} \int_\Omega \left(m(\phi_1)|\nabla \mu_1(\tau)|^2+ m(\phi_2)|\nabla \mu_2(\tau)|^2\right)\,\mathrm{d}x\mathrm{d}\tau  \notag\\
&\quad \leq  2E(\phi_0)-\int_0^t\int_\Omega (\widehat{\uu}(\tau)\cdot\nabla ) \uu_1(\tau) \cdot \widehat{\uu}(\tau)\,\mathrm{d}x\mathrm{d}\tau
\notag \\
&\qquad -\int_0^t\int_\Omega (2(\nu(\phi_1(\tau))-\nu(\phi_2(\tau)))D \uu_2(\tau)): D \widehat{\uu}(\tau)\,\mathrm{d}x \mathrm{d}\tau
 \notag\\
&\qquad - \int_0^t\int_\Omega \mu_1(\tau)\nabla \phi_1(\tau) \cdot \uu_2(\tau) \,\mathrm{d}x\mathrm{d}\tau
- \int_0^t\int_\Omega \mu_2(\tau)\nabla \phi_2(\tau) \cdot \uu_1(\tau) \,\mathrm{d}x\mathrm{d}\tau,
\label{diff-u2}
\end{align}
where we have used the incompressibility condition for $\uu_i$ and the following fact due to integration by parts
\begin{align*}
&\int_0^t\big((\uu_1(\tau) \cdot \nabla) \uu_1(\tau),\uu_2(\tau)\big)\,\mathrm{d} \tau
+\int_0^t\big((\uu_2(\tau) \cdot \nabla) \uu_2(\tau),\uu_1(\tau)\big)\,\mathrm{d} \tau\\
&\quad = -\int_0^t\int_\Omega (\uu_2(\tau)\cdot\nabla) \widehat{\uu}(\tau)\cdot \widehat{\uu}(\tau)\,\mathrm{d}x\mathrm{d}\tau
-\int_0^t\int_\Omega (\widehat{\uu}(\tau)\cdot\nabla ) \uu_1(\tau) \cdot \widehat{\uu}(\tau)\,\mathrm{d}x\mathrm{d}\tau\\
&\quad =-\int_0^t\int_\Omega (\widehat{\uu}(\tau)\cdot\nabla ) \uu_1(\tau) \cdot \widehat{\uu}(\tau)\,\mathrm{d}x\mathrm{d}\tau.
\end{align*}
Next, from the regularity properties of $(\uu_i, \phi_i, \mu_i, \omega_i)$, we are allowed to take $\mu_i\in L^2(0,T;H^1(\Omega))$ as a test function in the equation for $\phi_i$.  Integrating the resultant over $\Omega\times (0,t)$ and after integration by parts, we obtain the energy identities
\begin{align}
E(\phi_i(t)) +\int_0^{t} \int_\Omega m(\phi_i(\tau))|\nabla \mu_i(\tau)|^2 \,\mathrm{d}x\mathrm{d}\tau
= E(\phi_0)- \int_0^{t} \int_\Omega (\uu_i(\tau)\cdot\nabla \phi_i(\tau))\mu_i(\tau) \,\mathrm{d}x\mathrm{d}\tau,
\label{diff-u3}
\end{align}
for $i=1,2$. Subtracting \eqref{diff-u3} from \eqref{diff-u2} leads to
\begin{align}
&\frac12\|\uu(t)\|^2 + \int_0^t\int_\Omega 2\nu(\phi_1(\tau)) |D\widehat{\uu}(\tau)|^2\,\mathrm{d}x\mathrm{d}\tau  \notag\\
&\quad \leq -\int_0^t\int_\Omega (\widehat{\uu}(\tau)\cdot\nabla ) \uu_1(\tau) \cdot \widehat{\uu}(\tau)\,\mathrm{d}x\mathrm{d}\tau
 \notag\\
&\qquad
-\int_0^t\int_\Omega (2(\nu(\phi_1(\tau))-\nu(\phi_2(\tau)))D \uu_2(\tau)): D \widehat{\uu}(\tau)\,\mathrm{d}x \mathrm{d}\tau\notag \\
&\qquad +\int_0^{t} \int_\Omega \widehat{\mu}(\tau)\nabla\phi_1(\tau) \cdot \widehat{\uu}(\tau)\,\mathrm{d}x\mathrm{d}\tau
+\int_0^{t} \int_\Omega \mu_2(\tau)\nabla\widehat{\phi}(\tau) \cdot \widehat{\uu}(\tau)\,\mathrm{d}x\mathrm{d}\tau\notag\\
&\quad = -\int_0^t\int_\Omega (\widehat{\uu}(\tau)\cdot\nabla ) \uu_1(\tau) \cdot \widehat{\uu}(\tau)\,\mathrm{d}x\mathrm{d}\tau\notag\\
&\qquad -\int_0^t\int_\Omega (2(\nu(\phi_1(\tau))-\nu(\phi_2(\tau)))D \uu_2(\tau)): D \widehat{\uu}(\tau)\,\mathrm{d}x \mathrm{d}\tau\notag \\
&\qquad +\int_0^{t} \int_\Omega \mu_2(\tau)\nabla\widehat{\phi}(\tau) \cdot \widehat{\uu}(\tau)\,\mathrm{d}x\mathrm{d}\tau
+\int_0^{t} \int_\Omega \Delta^2\widehat{\phi}(\tau) \nabla \phi_1(\tau)\cdot \widehat{\uu}(\tau)\,\mathrm{d}x\mathrm{d}\tau
 \notag\\
&\qquad
+ \int_0^{t}\int_\Omega G(\phi_1(\tau),\phi_2(\tau)) \nabla \phi_1(\tau)\cdot \widehat{\uu}(\tau)\,\mathrm{d}x\mathrm{d}\tau.
\label{diff-u4}
\end{align}
Besides, we are allowed to take $\Delta^2\widehat{\phi}\in L^2(0,T;H^1(\Omega))$ as a test function for \eqref{phasefieldd}. Integrating the resultant over $\Omega\times (0,t)$ and after integration by parts, we find that
\begin{align}
&\frac12\|\Delta\widehat{\phi}(t)\|^2 + \int_0^t\int_\Omega m(\phi_1(\tau))|\nabla \Delta^2 \widehat{\phi}(\tau)|^2\,\mathrm{d}x\mathrm{d}\tau \notag \\
&\quad = -\int_0^t\int_\Omega \big(\widehat{\uu}(\tau)\cdot\nabla\phi_1(\tau)
  + \uu_2(\tau)\cdot\nabla \widehat{\phi}(\tau) \big) \Delta^2\widehat{\phi}(\tau)\,\mathrm{d} x\mathrm{d}\tau\notag \\
&\qquad   -\int_0^t\int_\Omega m(\phi_1(\tau))\nabla G(\phi_1(\tau),\phi_2(\tau)) \cdot \nabla \Delta^2\widehat{\phi}(\tau)\,\mathrm{d} x\mathrm{d}\tau\notag\\
&\qquad   -\int_0^t\int_\Omega (m(\phi_1(\tau))-m(\phi_2(\tau)))\nabla \mu_2(\tau)\cdot \nabla \Delta^2\widehat{\phi}(\tau)\,\mathrm{d} x\mathrm{d}\tau.
  \label{dif-2}
\end{align}
Adding \eqref{diff-u4} and \eqref{dif-2} together, using the assumptions (A1), (A2) and Korn's inequality, we
obtain the following inequality
\begin{align}
&\frac12\|\uu(t)\|^2+\frac12\|\Delta\widehat{\phi}(t)\|^2 + \nu_*\int_0^t\|\nabla \widehat{\uu}(\tau)\|^2\,\mathrm{d}\tau
+ m_*\int_0^t \|\nabla \Delta^2 \widehat{\phi}(\tau)\|^2\,\mathrm{d}\tau \notag \\
& \quad \leq -\int_0^t\int_\Omega (\widehat{\uu}(\tau)\cdot\nabla ) \uu_1(\tau) \cdot \widehat{\uu}(\tau)\,\mathrm{d}x\mathrm{d}\tau\notag\\
& \qquad -\int_0^t\int_\Omega (2(\nu(\phi_1(\tau))-\nu(\phi_2(\tau)))D \uu_2(\tau)): D \widehat{\uu}(\tau)\,\mathrm{d}x \mathrm{d}\tau\notag \\
& \qquad +\int_0^{t} \int_\Omega \mu_2(\tau)\nabla\widehat{\phi}(\tau) \cdot \widehat{\uu}(\tau)\,\mathrm{d}x\mathrm{d}\tau
+ \int_0^{t}\int_\Omega G(\phi_1(\tau),\phi_2(\tau)) \nabla \phi_1(\tau)\cdot \widehat{\uu}(\tau)\,\mathrm{d}x\mathrm{d}\tau\notag\\
& \qquad -\int_0^t\int_\Omega  \uu_2(\tau)\cdot\nabla \widehat{\phi}(\tau) \Delta^2\widehat{\phi}(\tau)\,\mathrm{d} x\mathrm{d}\tau\notag \\
&\qquad   -\int_0^t\int_\Omega m(\phi_1(\tau))\nabla G(\phi_1(\tau),\phi_2(\tau)) \cdot \nabla \Delta^2\widehat{\phi}(\tau)\,\mathrm{d} x\mathrm{d}\tau\notag\\
&\qquad   -\int_0^t\int_\Omega (m(\phi_1(\tau))-m(\phi_2(\tau)))\nabla \mu_2(\tau)\cdot \nabla \Delta^2\widehat{\phi}(\tau)\,\mathrm{d} x\mathrm{d}\tau\notag\\
  &\quad =: \sum_{j=1}^7 I_j.
  \label{dif-3}
\end{align}

Let us first estimate $J_2,...J_7$ on the right-hand side of \eqref{dif-3}, leaving the term $J_1$ to the end.
Using the assumption (A1) and the bounds for $\phi_1$, $\phi_2$, we see that
\begin{align}
J_2&\leq C\int_0^t\|\nu(\phi_1(\tau))-\nu(\phi_2(\tau))\|_{L^\infty}\|D \uu_2(\tau)\|\|\nabla  \widehat{\uu}(\tau)\|\,\mathrm{d}\tau\notag\\
&\leq C\int_0^t\left\|\int_0^1 \nu'(s\phi_1(\tau)+(1-s)\phi_2(\tau))\widehat{\phi}(\tau)\,\mathrm{d}s\right\|_{L^\infty}
\|D \uu_2(\tau)\|\|\nabla  \widehat{\uu}(\tau)\|\,\mathrm{d}\tau \notag\\
&\leq C\int_0^t\|\Delta\widehat{\phi}(\tau)\| \|D \uu_2(\tau)\|\|\nabla  \widehat{\uu}(\tau)\|\,\mathrm{d}\tau \notag\\
&\leq \frac{\nu_*}{6}\int_0^t \|\nabla  \widehat{\uu}(\tau)\|^2\,\mathrm{d}\tau
+C \int_0^t\|\nabla \uu_2(\tau)\|^2\|\Delta\widehat{\phi}(\tau)\|^2\,\mathrm{d}\tau.
\notag
\end{align}
In a similar manner, we have
\begin{align}
J_7
&\leq \int_0^t  \|m(\phi_1(\tau))-m(\phi_2(\tau))\|_{L^\infty}\|\nabla \mu_2(\tau)\|\|\nabla \Delta^2\widehat{\phi}(\tau)\|\,\mathrm{d}\tau \notag\\
&\leq \int_0^t \left\|\int_0^1 m'(s\phi_1(\tau)+(1-s)\phi_2(\tau))\widehat{\phi}(\tau)\,\mathrm{d}s \right\|_{L^\infty} \|\nabla \mu_2(\tau)\| \|\nabla \Delta^2\widehat{\phi}(\tau)\|\mathrm{d}\tau \notag\\
&\leq C \int_0^t\|\Delta\widehat{\phi}(\tau)\| \|\nabla \mu_2(\tau)\|\|\nabla \Delta^2\widehat{\phi}(\tau)\|\mathrm{d}\tau\notag\\
&\leq \frac{m_*}{6} \int_0^t \|\nabla \Delta^2\widehat{\phi}(\tau)\|^2\,\mathrm{d}\tau
+C \int_0^t \|\nabla \mu_2(\tau)\|^2\|\Delta\widehat{\phi}(\tau)\|^2\,\mathrm{d}\tau.\notag
\end{align}
For $J_3$, it follows that
\begin{align}
J_3 & \leq \int_0^t\|\mu_2(\tau)\|_{L^3}\|\nabla\widehat{\phi}(\tau)\|_{\mathbf{L}^6} \|\widehat{\uu}(\tau)\| \,\mathrm{d}\tau \notag  \\
& \leq \int_0^t\|\widehat{\uu}(\tau)\|^2\,\mathrm{d}\tau + C \int_0^t \|\mu_2(\tau)\|_{L^3}^2\|\Delta\widehat{\phi}(\tau)\|^2\,\mathrm{d}\tau.\notag
\end{align}
From \eqref{GG1}, we obtain
\begin{align}
J_4& \leq \int_0^t \|G(\phi_1(\tau),\phi_2(\tau))\| \|\nabla \phi_1(\tau)\|_{\mathbf{L}^6}\| \widehat{\uu}(\tau)\|_{\mathbf{L}^3}\,\mathrm{d}\tau \notag \\
&\leq C \sup_{\tau\in[0,t]}\|\phi_1(\tau)\|_{H^2} \int_0^t \|\Delta\widehat{\phi}(\tau)\|\| \widehat{\uu}(\tau)\|^\frac12
\|\nabla \widehat{\uu}(\tau)\|^\frac12\,\mathrm{d}\tau \notag\\
&\leq \frac{\nu_*}{6}\int_0^t\|\nabla  \widehat{\uu}(\tau)\|^2\,\mathrm{d}\tau
      +C\int_0^t \| \widehat{\uu}(\tau)\|^2\,\mathrm{d}\tau
      +C\int_0^t \|\Delta\widehat{\phi}(\tau)\|^2\,\mathrm{d}\tau.\notag
\end{align}
For $J_5$, using integration by parts, we get
\begin{align}
J_5
&= \int_0^t \int_\Omega (\widehat{\phi}(\tau) \uu_2(\tau))\cdot\nabla \Delta^2\widehat{\phi}(\tau)\,\mathrm{d} x\mathrm{d}\tau\notag\\
& \leq \int_0^t  \|\uu_2(\tau)\|_{\mathbf{L}^3} \| \widehat{\phi}(\tau)\|_{L^6}\|\nabla \Delta^2\widehat{\phi}(\tau)\|\,\mathrm{d}\tau \notag\\
&\leq \frac{m_*}{6} \int_0^t \|\nabla \Delta^2\widehat{\phi}(\tau)\|^2 \,\mathrm{d}\tau
+C \int_0^t\|\uu_2(\tau)\|_{\mathbf{L}^3}^2 \| \Delta \widehat{\phi}(\tau)\|^2\,\mathrm{d}\tau.\notag
\end{align}
Combining the elliptic estimate for the bi-Laplacian subject to Neumann boundary conditions
\begin{align*}
\|\widehat{\phi}\|_{H^5}
&\leq C(\|\Delta^2 \widehat{\phi}\|_{H^1}+\|\widehat{\phi}\|)\\
&\leq C \|\nabla \Delta^2 \widehat{\phi}\|
+ C\|\Delta^2 \widehat{\phi}\|,
\end{align*}
with the interpolation $\|\widehat{\phi}\|_{H^4}\leq C\|\widehat{\phi}\|_{H^2}^\frac13 \|\widehat{\phi}\|_{H^5}^\frac23$, and Young's inequality,
we find that
\begin{align*}
\|\widehat{\phi}\|_{H^5}\leq C(\|\nabla \Delta^2 \widehat{\phi}\|+\|\Delta \widehat{\phi}\|).
\end{align*}
The above estimate together with $\|\widehat{\phi}\|_{H^3}\leq C\|\widehat{\phi}\|_{H^2}^\frac23\|\widehat{\phi}\|_{H^5}^\frac13$ also implies
$$
\|\widehat{\phi}\|_{H^3}\leq C\|\nabla \Delta^2 \widehat{\phi}\|^\frac13\|\Delta \widehat{\phi}\|^\frac23+C\|\Delta \widehat{\phi}\|.
$$
Thus, for $J_6$,   we can deduce from \eqref{GG2} that
\begin{align*}
J_6
&\leq \int_0^t  \|m(\phi_1(\tau))\|_{L^\infty}\|\nabla G(\phi_1(\tau),\phi_2(\tau))\|\| \nabla \Delta^2\widehat{\phi}(\tau)\| \,\mathrm{d}\tau \\
&\leq C\int_0^t \big(\|\nabla \Delta \widehat{\phi}(\tau)\|+ (1+\|\phi_1(\tau)\|_{H^3}+\|\phi_2(\tau)\|_{H^3})\|\Delta \widehat{\phi}(\tau)\|\big) \| \nabla \Delta^2\widehat{\phi}(\tau)\|\,\mathrm{d}\tau \\
&\leq C\int_0^t \big(\|\nabla \Delta^2 \widehat{\phi}(\tau)\|^\frac43\|\Delta \widehat{\phi}(\tau)\|^\frac23 + \|\nabla \Delta^2 \widehat{\phi}(\tau)\|^\frac13\|\Delta \widehat{\phi}(\tau)\|^\frac53\big)\,\mathrm{d}\tau \\
&\quad + C \int_0^t \big(1+\|\phi_1(\tau)\|_{H^3}+\|\phi_2(\tau)\|_{H^3}\big)\|\Delta \widehat{\phi}(\tau)\|\| \nabla \Delta^2\widehat{\phi}(\tau)\|\,\mathrm{d}\tau
\\
&\leq \frac{m_*}{6} \int_0^t \|\nabla \Delta^2\widehat{\phi}(\tau)\|^2\,\mathrm{d}\tau
+C\int_0^t \big(1+\|\phi_1(\tau)\|_{H^3}^2+\|\phi_2(\tau)\|_{H^3}^2\big)\|\Delta \widehat{\phi}(\tau)\|^2\,\mathrm{d}\tau.
\end{align*}

To complete the proof of Theorem \ref{thm:weak}, it remains to estimate $J_1$. We distinguish three cases.

\textbf{Case 1}. Assume that condition $\mathrm{(a)}$ is satisfied. Applying the interpolation inequality $\|\widehat{\uu}\|_{L^\frac{2p}{p-2}}\leq C\|\nabla \widehat{\uu}\|^{\frac{3}{p}} \|\widehat{\uu}\|^{1-\frac{3}{p}}$ and Young's inequality, we can deduce that (see e.g., \cite{Se62,Zhao14})
\begin{align*}
J_1&= \int_0^t\int_\Omega (\widehat{\uu}(\tau)\cdot\nabla ) \widehat{\uu}(\tau) \cdot \uu_1(\tau)\,\mathrm{d}x\mathrm{d}\tau \\
&\leq \int_0^t \|\widehat{\uu}(\tau)\|_{\mathbf{L}^\frac{2p}{p-2}}\|\nabla \widehat{\uu}(\tau)\|\|\uu_1(\tau)\|_{\mathbf{L}^p}\,\mathrm{d}\tau \\
&\leq C\int_0^t \|\nabla \widehat{\uu}(\tau)\|^{1+\frac{3}{p}} \|\widehat{\uu}(\tau)\|^{1-\frac{3}{p}}\|\uu_1(\tau)\|_{\mathbf{L}^p}\,\mathrm{d}\tau\\
&\leq \frac{\nu_*}{6}\int_0^t\|\nabla  \widehat{\uu}(\tau)\|^2\,\mathrm{d}\tau
+C \int_0^t \|\uu_1(\tau)\|_{\mathbf{L}^p}^q \|\widehat{\uu}(\tau)\|^2\,\mathrm{d}\tau.
\end{align*}
Collecting the estimates for $J_1,...J_7$, we infer from \eqref{dif-3} that
\begin{align}
&\frac{1}{2}\int_0^t \big(\|\widehat{\uu}(\tau)\|^2 +\|\Delta\widehat{\phi}(\tau)\|^2\big)\,\mathrm{d}\tau
+ \frac{\nu_*}{2}\int_0^t \|\nabla \widehat{\uu}(\tau)\|^2\,\mathrm{d}\tau
+ \frac{m_*}{2}\int_0^t\|\nabla \Delta^2 \widehat{\phi}(\tau)\|^2\,\mathrm{d}\tau\notag\\
&\quad  \leq C\int_0^t h_{\mathrm{a}}(\tau)\big(\|\widehat{\uu}(\tau)\|^2 + \|\Delta\widehat{\phi}(\tau)\|^2\big)\,\mathrm{d}\tau,
\quad \forall\, t\in [0,T],\notag
\end{align}
where
$$
h_{\mathrm{a}}(t)=1+\|\uu_1(t)\|_{\mathbf{L}^p}^q + \|\nabla \uu_2(t)\|^2+\|\mu_2(t)\|_{H^1}^2 +\|\phi_1(t)\|_{H^3}^2+\|\phi_2(t)\|_{H^3}^2.
$$
Since $h_{\mathrm{a}}(t)\in L^1(0,T)$, as a direct consequence of Gronwall's lemma, we can conclude
$$
\|\widehat{\uu}(t)\|=\|\Delta\widehat{\phi}(t)\|=0,\quad \forall\,t\in [0,T],
$$
which yields the uniqueness result.

\textbf{Case 2}. Assume that condition $\mathrm{(b)}$ is satisfied. In this case, $J_1$ can be estimated by using the interpolation inequality  $\|\widehat{\uu}\|_{L^\frac{2p}{p-1}}\leq C\|\nabla \widehat{\uu}\|^\frac{3}{2p}\|\widehat{\uu}\|^{1-\frac{3}{2p}}$ and Young's inequality such that (cf. \cite{Be02})
\begin{align*}
J_1&\leq \int_0^t\|\nabla \uu_1(\tau)\|_{\mathbf{L}^p} \|\widehat{\uu}(\tau)\|_{\mathbf{L}^\frac{2p}{p-1}}^2\,\mathrm{d}\tau \\
&\leq C\int_0^t \|\nabla \uu_1(\tau)\|_{\mathbf{L}^p} \|\nabla \widehat{\uu}(\tau)\|^\frac{3}{p}\|\widehat{\uu}(\tau)\|^{2-\frac{3}{p}}\,\mathrm{d}\tau \\
&\leq \frac{\nu_*}{6}\int_0^t \|\nabla  \widehat{\uu}(\tau)\|^2\,\mathrm{d}\tau
+C \int_0^t \|\nabla \uu_1(\tau)\|_{\mathbf{L}^p}^q\|\widehat{\uu}(\tau)\|^2\,\mathrm{d}\tau.
\end{align*}
The above estimate together with the previous estimates for $J_2,...,J_7$ and \eqref{dif-3} yields that
\begin{align}
&\frac{1}{2}\int_0^t \big(\|\widehat{\uu}(\tau)\|^2 +\|\Delta\widehat{\phi}(\tau)\|^2\big)\,\mathrm{d}\tau
+ \frac{\nu_*}{2}\int_0^t \|\nabla \widehat{\uu}(\tau)\|^2\,\mathrm{d}\tau
+ \frac{m_*}{2}\int_0^t\|\nabla \Delta^2 \widehat{\phi}(\tau)\|^2\,\mathrm{d}\tau\notag\\
&\quad  \leq C\int_0^t h_{\mathrm{b}}(\tau)\big(\|\widehat{\uu}(\tau)\|^2 + \|\Delta\widehat{\phi}(\tau)\|^2\big)\,\mathrm{d}\tau,
\quad \forall\, t\in [0,T],\notag
\end{align}
where
$$
h_{\mathrm{b}}(t)=1+\|\nabla \uu_1\|_{\mathbf{L}^p}^q + \|\nabla \uu_2(t)\|^2+\|\mu_2(t)\|_{H^1}^2+\|\phi_1(t)\|_{H^3}^2+\|\phi_2(t)\|_{H^3}^2.
$$
From the fact $h_{\mathrm{b}}(t)\in L^1(0,T)$ and Gronwall's lemma, we obtain
$$
\|\widehat{\uu}(t)\|=\|\Delta\widehat{\phi}(t)\|=0,\quad \forall\,t\in [0,T].
$$

\textbf{Case 3}. Assume that condition $\mathrm{(c)}$ is satisfied. The term $J_1$ can be estimated as in \cite[Section 2]{Ri02} such that
\begin{align}
J_1&\leq C\int_0^t \|\uu(\tau)\|^{1+s-\frac{3}{p}}\|\nabla \uu(\tau)\|^{1-s+\frac{3}{p}}\|\uu_1(\tau)\|_{\mathbf{W}^{s,p}}\,\mathrm{d}\tau\notag\\
&\leq \frac{\nu_*}{6}\int_0^t \|\nabla  \widehat{\uu}(\tau)\|^2\,\mathrm{d}\tau
+C\int_0^t \|\uu_1(\tau)\|_{\mathbf{W}^{s,p}}^q\|\uu(\tau)\|^2\,\mathrm{d}\tau.\notag
\end{align}
Hence, it holds
\begin{align}
&\frac{1}{2}\int_0^t \big(\|\widehat{\uu}(\tau)\|^2 +\|\Delta\widehat{\phi}(\tau)\|^2\big)\,\mathrm{d}\tau
+ \frac{\nu_*}{2}\int_0^t \|\nabla \widehat{\uu}(\tau)\|^2\,\mathrm{d}\tau
+ \frac{m_*}{2}\int_0^t\|\nabla \Delta^2 \widehat{\phi}(\tau)\|^2\,\mathrm{d}\tau\notag\\
&\quad  \leq C\int_0^t h_{\mathrm{c}}(\tau)\big(\|\widehat{\uu}(\tau)\|^2 + \|\Delta\widehat{\phi}(\tau)\|^2\big)\,\mathrm{d}\tau,
\quad \forall\, t\in [0,T],\notag
\end{align}
where
$$
h_{\mathrm{c}}(t)=1+\|\uu_1(\tau)\|_{\mathbf{W}^{s,p}}^q + \|\nabla  \uu_2(t)\|^2+\|\mu_2(t)\|_{H^1}^2+\|\phi_1(t)\|_{H^3}^2+\|\phi_2(t)\|_{H^3}^2.
$$
From the fact $h_{\mathrm{c}}(t)\in L^1(0,T)$ and Gronwall's lemma, we again have
$$
\|\widehat{\uu}(t)\|=\|\Delta\widehat{\phi}(t)\|=0,\quad \forall\,t\in [0,T].
$$
The proof of Theorem \ref{thm:weakuni} is complete. \hfill $\square$

\section{Local Strong Solutions}\label{sec:str}
\setcounter{equation}{0}

In this section, we study the strong solutions to problem \eqref{NS}--\eqref{IC}.

\subsection{Local well-posedness}
To prove the existence result, we just need to derive sufficient higher-order uniform estimates for the Galerkin approximate solutions $(\uu_n,\phi_n,\mu_n,\omega_n)$ in Section \ref{sec:Galer} and then pass to the limit as $n\to+\infty$ to extract a subsequence converging in some more regular spaces.

From the arguments in Section \ref{sec:weak}, we have obtained the following global estimates
\begin{lemma}
\label{low}
For any initial datum $(\uu_0,\phi_0)\in
\mathbf{H}_\sigma\times H^2_N(\Omega)$, the approximate
solution $(\uu_n$, $\phi_n$, $\mu_n$, $\omega_n)$ to problem  \eqref{eq:weak} satisfies the following uniform-in-time estimates
 \begin{align}
 & \|\uu_n(t)\|+\|\phi_n(t)\|_{H^2} +\|\omega_n(t)\|\leq C, \quad \forall\, t\geq 0, \label{unilow}
 \\
 & \int_0^{+\infty} \left(\nu_*\|\nabla \uu_n(t)\|^2+m_*\|\nabla \mu_n(t)\|^2\right)\,\mathrm{d}t\leq C,
 \label{int}
 \end{align}
 where the constant $C>0$ depends on $\|\uu_0\|, \|\phi_0\|_{H^2}$, $\eta$ and $\Omega$, but is independent of $n$ and $t$.
 \end{lemma}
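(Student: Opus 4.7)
The plan is to reproduce in the Galerkin setting the formal derivation of the basic energy law in Lemma \ref{BEL}. Concretely, I would choose the test functions $\mathbf{v}=\uu_n$ in the momentum equation, $w=\mu_n$ in the phase equation, and $w=\partial_t \phi_n$ in the chemical potential identity of the Galerkin system \eqref{eq:weak}. Since $\partial_t \phi_n \in W_n$, the projection operator $\Pi_n$ is self-adjoint on $L^2$ and commutes with multiplication by test functions from $W_n$, so the coupling terms involving $\Pi_n \Delta \omega_n$, $\Pi_n(f'(\phi_n)\omega_n)$ and $\Pi_n f(\phi_n)$ reproduce exactly the manipulation carried out in the proof of Lemma \ref{BEL}. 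Adding the resulting identities and using incompressibility (so that the pressure term disappears) gives the approximate energy identity
\[
\frac{\mathrm{d}}{\mathrm{d}t}\Big(\tfrac12\|\uu_n\|^2+E(\phi_n)\Big)+\int_\Omega \bigl(2\nu(\phi_n)|D\uu_n|^2+m(\phi_n)|\nabla\mu_n|^2\bigr)\,\mathrm{d}x=0,
\]
which after integration in time yields a uniform-in-$t$ bound provided both the initial energy and the functional $E(\phi_n(t))$ can be controlled from below.

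For the initial energy, I would exploit that $\mathrm{P}_n \uu_0 \to \uu_0$ in $\mathbf{H}_\sigma$ and $\Pi_n \phi_0 \to \phi_0$ in $H^2_N(\Omega)$, so that $\|\uu_n(0)\|\le 1+\|\uu_0\|$ and $\|\phi_n(0)\|_{H^2}\le 1+\|\phi_0\|_{H^2}$ for $n$ large, and then bound $E(\phi_n(0))$ by Sobolev embedding $H^2\hookrightarrow L^\infty$ together with assumption (A3). The nontrivial point will be bounding $E(\phi_n(t))$ from below uniformly in $n$ and $t$, and this is the main obstacle when $\eta<0$. Here I would use the algebraic identity
\[
s f(s)-2F(s)=\tfrac12 s^4-\tfrac12,
\]
so that integrating by parts and rearranging gives
\[
\eta\!\int_\Omega\!\!\Big[\tfrac12|\nabla\phi_n|^2+F(\phi_n)\Big]\mathrm{d}x \ge -\tfrac14\!\int_\Omega(-\Delta\phi_n+f(\phi_n))^2\mathrm{d}x-\tfrac{\eta}{8}\!\int_\Omega \phi_n^4\,\mathrm{d}x+C_\eta,
\]
which lets the negative $\eta$-contribution be absorbed into half of the $\frac12\|\omega_n\|^2$ term coming from the leading part of $E(\phi_n)$. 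This produces a uniform lower bound $\tfrac12\|\uu_n\|^2+E(\phi_n)\ge \tfrac12\|\uu_n\|^2+\tfrac14\|\omega_n\|^2-C$.

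Combining the two bounds yields the uniform estimates on $\|\uu_n\|$ and $\|\omega_n\|$ in $L^\infty_t L^2_x$, together with $\|\nabla\uu_n\|, \|\nabla\mu_n\|\in L^2_t L^2_x$ (using assumptions (A1)--(A2) and Korn's inequality). The last ingredient is elliptic regularity for the Neumann problem $-\Delta\phi_n+f(\phi_n)=\omega_n$: testing against $\phi_n$ absorbs the quartic growth of $f$ and controls $\|\phi_n\|_{H^1}$ in terms of $\|\omega_n\|$, and then standard $H^2$-regularity together with $\|f(\phi_n)\|\le C(1+\|\phi_n\|_{L^6}^3)$ and Sobolev embedding yields $\|\phi_n\|_{H^2}\le C(1+\|\omega_n\|^3)$, which closes the loop. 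Since every constant obtained is independent of the Galerkin parameter $n$, the time horizon $T$ and the time variable $t$, the local approximate solutions of Proposition \ref{wellODE} extend globally and satisfy \eqref{unilow}--\eqref{int}, completing the proof.
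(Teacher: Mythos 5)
Your proposal is correct and follows essentially the same route as the paper: the paper's proof of this lemma simply refers back to the \emph{Second estimate} of Section 3.1, which consists of exactly the steps you describe — the approximate energy identity obtained with the test functions $\uu_n$, $\mu_n$, $\partial_t\phi_n$ and the self-adjointness of $\Pi_n$, the bound on the initial energy via the convergence of the projections, the lower bound on $E(\phi_n)$ using $sf(s)-2F(s)=\tfrac12 s^4-\tfrac12$ with absorption into $\tfrac12\|\omega_n\|^2$, and the elliptic estimate $\|\phi_n\|_{H^2}\le C(1+\|\omega_n\|^3)$ for the Neumann problem. No gaps.
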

Next, we derive higher-order differential inequalities for $\uu_n$ and $\phi_n$ (in terms of $\mu_n$).

\begin{lemma}\label{lem:duu}
The approximate solution $(\uu_n,\phi_n,\mu_n)$ satisfies
\begin{align}
&\frac{1}{2}\frac{\mathrm{d}}{\mathrm{d}t} \|\nabla \uu_n\|^2
+\frac{\nu_*}{2}\|\mathbf{A}\uu_n\|^2 +\frac{3\nu_*}{16C_*}\|\partial_t\uu_n\|^2\notag\\
&\quad \leq C\|\nabla \uu_n\|^6 + C\|\nabla \uu_n\|^2 +C\|\nabla \mu_n\|^2,
\label{duu-A1}
\end{align}
where $C_*, C>0$ are some constants depending on $\|\uu_0\|$, $\|\phi_0\|_{H^2}$, $\eta$ and $\Omega$, but independent of $n$ and $t$.
\end{lemma}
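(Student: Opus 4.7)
The plan is to derive \eqref{duu-A1} by combining two ingredients: a differential identity obtained by testing the discrete momentum equation of \eqref{eq:weak} against $\partial_t\uu_n$, and a Stokes regularity estimate that produces $\|\mathbf{A}\uu_n\|^2$ on the left-hand side. Both $\partial_t \uu_n$ and $\mathbf{A}\uu_n$ belong to $\mathbf{V}_n$ (since the $\mathbf{v}_j$ are eigenfunctions of $\mathbf{A}$), so $\partial_t\uu_n$ is an admissible test function at the Galerkin level.

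First I would take $\mathbf{v}=\partial_t\uu_n$ in the first equation of \eqref{eq:weak}. Rewriting the viscous contribution as
\begin{equation*}
(2\nu(\phi_n) D\uu_n, D\partial_t\uu_n) = \frac{\mathrm{d}}{\mathrm{d}t}\int_\Omega \nu(\phi_n)|D\uu_n|^2\,\mathrm{d}x - \int_\Omega \nu'(\phi_n)\partial_t\phi_n\,|D\uu_n|^2\,\mathrm{d}x,
\end{equation*}
and then using Korn's inequality together with assumption (A1), one obtains a pointwise-in-time identity whose left-hand side controls $\|\partial_t\uu_n\|^2$ and a time derivative of a quantity equivalent to $\|\nabla \uu_n\|^2$, modulo the commutator $\int_\Omega \nu'(\phi_n)\partial_t\phi_n|D\uu_n|^2\,\mathrm{d}x$. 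Next, exploiting $\nabla\cdot(2\nu(\phi_n)D\uu_n)=\nu(\phi_n)\Delta\uu_n + 2 D\uu_n\nabla\nu(\phi_n)$ (valid because $\nabla\cdot\uu_n=0$), I would recast the momentum equation as the Stokes system
\begin{equation*}
-\Delta\uu_n+\nabla(P_n/\nu(\phi_n)) = \nu(\phi_n)^{-1}\bigl(\mu_n\nabla\phi_n-\partial_t\uu_n-(\uu_n\cdot\nabla)\uu_n+2D\uu_n\nabla\nu(\phi_n)\bigr)+P_n\nabla(\nu(\phi_n)^{-1}),
\end{equation*}
to which Lemma~\ref{stokes} applies, yielding
\begin{equation*}
\|\mathbf{A}\uu_n\|^2\leq C_*\bigl(\|\partial_t\uu_n\|^2+\|(\uu_n\cdot\nabla)\uu_n\|^2+\|\mu_n\nabla\phi_n\|^2+\|D\uu_n\nabla\nu(\phi_n)\|^2\bigr)
\end{equation*}
for some $C_*>0$ depending only on $\nu_*$, $\Omega$ and the uniform $H^2$-bound on $\phi_n$ from Lemma~\ref{low}; the pressure contribution is controlled via the second estimate of Lemma~\ref{stokes}.

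Adding a suitably small multiple $\kappa=\kappa(\nu_*,C_*)$ of this Stokes bound to the tested identity, the two $\|\partial_t\uu_n\|^2$ terms combine into the claimed $\frac{3\nu_*}{16C_*}\|\partial_t\uu_n\|^2$, while the viscous dissipation yields $\frac{\nu_*}{2}\|\mathbf{A}\uu_n\|^2$. The remaining nonlinear pieces are then treated via Lemma~\ref{low} and three-dimensional interpolation. The convective term is handled through $((\uu_n\cdot\nabla)\uu_n,\partial_t\uu_n)\leq \|\uu_n\|_{\mathbf{L}^6}\|\nabla\uu_n\|_{\mathbf{L}^3}\|\partial_t\uu_n\|\leq C\|\nabla\uu_n\|^{3/2}\|\mathbf{A}\uu_n\|^{1/2}\|\partial_t\uu_n\|$ and Young's inequality, which generates the characteristic $C\|\nabla\uu_n\|^6$ term on the right-hand side. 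For the Korteweg force, $\|\mu_n\nabla\phi_n\|\leq C\|\mu_n\|_{H^1}\|\phi_n\|_{H^2}\leq C(\|\nabla\mu_n\|+1)$ since $\overline{\mu_n}$ is uniformly bounded (see the third estimate in Section~3.1), producing $C\|\nabla\mu_n\|^2+C$. Finally $\|D\uu_n\nabla\nu(\phi_n)\|\leq C\|\nabla\uu_n\|^{1/2}\|\mathbf{A}\uu_n\|^{1/2}$, absorbed similarly.

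The genuine obstacle is the commutator $\int_\Omega \nu'(\phi_n)\partial_t\phi_n|D\uu_n|^2\,\mathrm{d}x$, because only the weak bound $\partial_t\phi_n\in L^2(0,T;(H^1)^*)$ is available at this stage. I would substitute $\partial_t\phi_n = -\Pi_n(\uu_n\cdot\nabla\phi_n)+\Pi_n\nabla\cdot(m(\phi_n)\nabla\mu_n)$ from the discrete phase equation, and then integrate by parts in the diffusive piece so as to shift a spatial derivative from $m(\phi_n)\nabla\mu_n$ onto $\nu'(\phi_n)|D\uu_n|^2$. Combining $\|D\uu_n\|_{\mathbf{L}^4}^2\leq C\|\nabla\uu_n\|\|\mathbf{A}\uu_n\|$, $\|\nabla D\uu_n\|\leq C\|\mathbf{A}\uu_n\|$, the Sobolev embedding $H^2\hookrightarrow L^\infty$ applied to $\phi_n$, and Lemma~\ref{low}, the resulting terms can be bounded by $\varepsilon(\|\mathbf{A}\uu_n\|^2+\|\partial_t\uu_n\|^2)+C(\|\nabla\uu_n\|^6+\|\nabla\uu_n\|^2+\|\nabla\mu_n\|^2)$. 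A final application of Young's inequality then yields \eqref{duu-A1}.
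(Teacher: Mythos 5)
Your overall architecture (one identity producing the time derivative of a gradient-type quantity, one producing $\|\partial_t\uu_n\|^2$, plus an $\mathbf{A}\uu_n$-bound, combined with a small multiplier) matches the spirit of the paper's proof, and your treatment of the convective and Korteweg terms is essentially the paper's. But the route you chose for generating the time derivative creates an obstacle that your proposal does not actually overcome. By integrating by parts in time in the viscous term you are forced to control the commutator $\int_\Omega \nu'(\phi_n)\partial_t\phi_n|D\uu_n|^2\,\mathrm{d}x$. After substituting the discrete phase equation and integrating by parts in the diffusive piece, you must bound
\begin{equation*}
\int_\Omega m(\phi_n)\,\nu'(\phi_n)\,\nabla\mu_n\cdot\big(D\uu_n:\nabla D\uu_n\big)\,\mathrm{d}x ,
\end{equation*}
and here the available regularity is insufficient: $\nabla\mu_n$ is only in $L^2$ and $\uu_n$ only in $H^2$, so any H\"older splitting requires either $\nabla\mu_n\in L^3$ or $\nabla D\uu_n\in L^3$ (i.e.\ $\uu_n\in H^{2+}$), neither of which is at hand; the tools you list ($\|D\uu_n\|_{\mathbf{L}^4}^2\le C\|\nabla\uu_n\|\,\|\mathbf{A}\uu_n\|$, $\|\nabla D\uu_n\|\le C\|\mathbf{A}\uu_n\|$, $H^2\hookrightarrow L^\infty$ for $\phi_n$) do not close this term, and the companion term $\int m\,\nu''(\phi_n)\nabla\phi_n|D\uu_n|^2\cdot\nabla\mu_n$ only yields $C\|\nabla\mu_n\|\,\|\mathbf{A}\uu_n\|^2$, which cannot be absorbed. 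A secondary issue is your Stokes-regularity step: the Galerkin momentum equation holds only against $\mathbf{V}_n$, so it does not come with a pressure $P_n$, and Lemma \ref{stokes} requires data in $\mathbf{H}_\sigma$; the reformulation $-\Delta\uu_n+\nabla(P_n/\nu(\phi_n))=\dots$ is therefore not available at the approximate level. Finally, even if these issues were repaired, your identity carries $\frac{\mathrm{d}}{\mathrm{d}t}\int_\Omega\nu(\phi_n)|D\uu_n|^2\,\mathrm{d}x$ rather than $\frac12\frac{\mathrm{d}}{\mathrm{d}t}\|\nabla\uu_n\|^2$, so it would prove only a variant of \eqref{duu-A1}.

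The paper sidesteps all of this: it tests the momentum equation by $\mathbf{A}\uu_n\in\mathbf{V}_n$, so the time derivative is exactly $\frac12\frac{\mathrm{d}}{\mathrm{d}t}\|\nabla\uu_n\|^2$ (no viscosity inside the differentiated quantity, hence no commutator), and the variable-viscosity term is bounded below by $\nu_*\|\mathbf{A}\uu_n\|^2$ minus controllable errors, using the auxiliary pressure of the Stokes problem $-\Delta\uu_n+\nabla P_n=\mathbf{A}\uu_n$ from Lemma \ref{stokes}. The identity tested by $\partial_t\uu_n$ is then used \emph{without} any time integration by parts: the viscous term is estimated directly by $\varepsilon\|\partial_t\uu_n\|^2+C\|\mathbf{A}\uu_n\|^2+C\|\nabla\uu_n\|^2$. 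You should restructure your argument along these lines; the commutator is not a technical nuisance to be absorbed but a sign that the time derivative must be generated by the $\mathbf{A}\uu_n$ test function instead.
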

\begin{proof}
The derivation of \eqref{duu-A1} is motivated by the argument in \cite{GMT18} for the Navier--Stokes--Cahn--Hilliard system. For the sake of completeness, we present the main steps here and point out some necessary modifications. Testing the first equation in \eqref{eq:weak} by $\mathbf{A}\uu_n$, after integration by parts, we get
\be \frac12 \frac{\mathrm{d}}{\mathrm{d}t} \|\nabla \uu_n\|^2-2(\nabla\cdot (\nu(\phi_n) D\uu_n), \mathbf{A}\uu_n)
 =
 -(\uu_n\cdot\nabla \uu_n, \mathbf{A}\uu_n) + (\mu_n\nabla \phi_n,\mathbf{A} \uu_n).
 \label{duu}
\ee
The right-hand side of \eqref{duu} can be estimated by using H\"{o}lder's inequality, the Sobolev embedding theorem, the Poincar\'e--Wirtinger inequality and the estimates \eqref{app-e6}, \eqref{unilow} such that
\begin{align}
|(\uu_n\cdot\nabla \uu_n, \mathbf{A}\uu_n)|
&\leq \|\uu_n\|_{\mathbf{L}^6}\|\nabla\uu_n\|_{\mathbf{L}^3}\|\mathbf{A}\uu_n\|\non\\
& \leq  C\|\nabla \uu_n\|^\frac32\|\mathbf{A}\uu_n\|^\frac{3}{2} \notag \\
& \leq \dfrac{\nu_*}{12}\|\mathbf{A}\uu_n\|^2 + C\|\nabla \uu_n\|^6,
\label{term 1}
 \end{align}
and
\begin{align}
| (\mu_n\nabla\phi_n, \mathbf{A}\uu_n)| &=| ((\mu_n-\overline{\mu_n})\nabla\phi_n, \mathbf{A}\uu_n)| \notag\\
& \leq \|\nabla \phi_n\|_{\mathbf{L}^6} \|\mu_n-\overline{\mu_n}\|_{L^3}\|\mathbf{A}\uu_n\|
\notag \\
& \leq \dfrac{\nu_*}{12}\|\mathbf{A}\uu_n\|^2 +  C\|\nabla \mu_n\|^2,
\notag
\end{align}
where we have used the fact $\mathbf{A}\uu_n\in \mathbf{H}_\sigma$ so that $(\overline{\mu_n}\nabla\phi_n, \mathbf{A}\uu_n)=0$ after intrgration by parts. The term on the left-hand side of \eqref{duu} involving the nonconstant viscosity can be treated as in \cite[Section 5]{GMT18}. Here the proof is indeed easier thanks to the estimate \eqref{unilow} and the Sobolev embedding theorem $H^2(\Omega)\hookrightarrow L^\infty(\Omega)$ in three dimensions. Let us sketch the procedure. It follows from Lemma \ref{stokes} that there exists a function $P_n \in L^2(0,T;H^1(\Omega))$ such that $-\Delta \uu_n + \nabla P_n = \mathbf{f}:=\mathbf{A}\uu_n$ almost everywhere in $\Omega \times (0,T)$. Then we find that
		\begin{align}
			&	- 2(\nabla \cdot(\nu(\phi_n)D\uu_n), \mathbf{A}\uu_n) \notag\\
& \quad = -(\nu(\phi_n)\Delta\uu_n,\mathbf{A}\uu_n) - 2(D\uu_n \nabla\nu(\phi_n), \mathbf{A}\uu_n)\notag \\
			& \quad = (\nu(\phi_n)\mathbf{A}\uu_n, \mathbf{A}\uu_n)
- (\nu(\phi_n)\nabla P_n, \mathbf{A}\uu_n) - 2 \left(\nu'(\phi_n)D\uu_n \nabla \phi_n, \mathbf{A}\uu_n \right) \notag \\
			& \quad \geq \nu_*\|\mathbf{A}\uu_n\|^2 + \left(\nu'(\phi_n)P_n  \nabla \phi_n, \mathbf{A}\uu_n \right) - 2 \left(\nu'(\phi_n)D\uu_n \nabla \phi_n, \mathbf{A}\uu_n \right).
\label{eq:step42}
		\end{align}
The last two terms on the right-hand side of \eqref{eq:step42} can be estimated as follows
\begin{align}
 		& |\left(\nu'(\phi_n)P_n  \nabla \phi_n, \mathbf{A}\uu_n \right)| + |2 \left(\nu'(\phi_n)D\uu_n \nabla \phi_n, \mathbf{A}\uu_n \right)|\notag  \\
		& \quad  \leq C \|\nu'(\phi_n)\|_{L^\infty}\|\nabla \phi_n\|_{\mathbf{L}^6} \left( \|P_n\|_{L^3} + \|D\uu_n\|_{\mathbf{L}^3} \right) \|\mathbf{A}\uu_n\|\notag \\
		& \quad  \leq C\left(\|P_n\|^\frac{1}{2}\|P_n\|_{H^1(\Omega)}^\frac{1}{2} + \|\nabla \uu_n\|^\frac{1}{2}\|\mathbf{A}\uu_n\|^\frac{1}{2} \right)\|\mathbf{A}\uu_n\|
\notag \\	
		& \quad  \leq C\left( \|\nabla \uu_n\|^\frac{1}{4}\|\mathbf{A}\uu_n\|^\frac{3}{4} + \|\nabla \uu_n\|^\frac{1}{2}\|\mathbf{A}\uu_n\|^\frac{1}{2} \right)\|\mathbf{A}\uu_n\| \notag \\
		& \quad  \leq \frac{\nu_*}{12}\|\mathbf{A}\uu_n\|^2  + C\|\nabla \uu_n\|^2.
\notag
\end{align}
Then we infer from the above estimates and \eqref{duu} that
\be
\frac12 \frac{\mathrm{d}}{\mathrm{d}t} \|\nabla \uu_n\|^2+ \frac{3\nu_*}{4} \|\mathbf{A}\uu_n\|^2
\leq C\|\nabla \uu_n\|^6 + C(\|\nabla \uu_n\|^2 + \|\nabla \mu_n\|^2).
\label{duu-A}
\ee
where the constant $C>0$ depends on $\|\uu_0\|$, $\|\phi_0\|_{H^2}$, $\Omega$, $\eta$, but is independent of $n$ and $t$.

Next, testing the equation for $\uu_n$ by $\partial_t\uu_n$, we get
\begin{align}
\|\partial_t\uu_n\|^2=-((\uu_n\cdot\nabla)\uu_n,\partial_t\uu_n) +(\nabla\cdot(2\nu(\phi_n)D\uu_n), \partial_t\uu_n)
+ (\mu_n\nabla \phi_n, \partial_t \uu_n).
\label{duu-ta}
\end{align}
The three terms on the right-hand side of \eqref{duu-ta} can be estimated as in \cite{GMT18}, with minor modifications due to the estimate \eqref{unilow}:
\begin{align}
|((\uu_n\cdot\nabla)\uu_n,\partial_t\uu_n)|&\leq \|\uu_n\|_{\mathbf{L}^6}\|\nabla \uu_n\|_{\mathbf{L}^3}\|\partial_t\uu_n\|\notag\\
&\leq C\|\nabla \uu_n\|^\frac32\|\mathbf{A}\uu_n\|^\frac12\|\partial_t\uu_n\|\notag\\
&\leq \frac{1}{12}\|\partial_t\uu_n\|^2+\|\mathbf{A}\uu_n\|^2 +C\|\nabla \uu_n\|^6,
\label{duu-ta1}
\end{align}
\begin{align*}
&|(\nabla\cdot(2\nu(\phi_n)D\uu_n), \partial_t\uu_n)|\\
&\quad \leq |(\nu(\phi_n)\Delta\uu_n, \partial_t\uu_n)|+ 2|(\nu'(\phi_n)D\uu_n\nabla \phi_n,\partial_t\uu_n)|\\
&\quad \leq C\|\nu(\phi_n)\|_{L^\infty}\|\mathbf{A}\uu_n\|\|\partial_t\uu_n\|+ C\|\nu'(\phi_n)\|_{L^\infty}\|D\uu_n\|_{\mathbf{L}^3}\|\nabla \phi_n\|_{\mathbf{L}^6}\|\partial_t\uu_n\|\\
&\quad \leq C\|\mathbf{A}\uu_n\|\|\partial_t\uu_n\| +C\|\nabla \uu_n\|^\frac12\|\mathbf{A}\uu_n\|^\frac12\|\partial_t\uu_n\|\\
&\quad \leq \frac{1}{12}\|\partial_t\uu_n\|^2 +C\|\mathbf{A}\uu_n\|^2 +C\|\nabla \uu_n\|^2,
\end{align*}
and
\begin{align}
|(\mu_n\nabla \phi_n, \partial_t \uu_n)|&=|((\mu_n-\overline{\mu_n})\nabla \phi_n, \partial_t \uu_n)|\notag\\
&\leq \|\mu_n-\overline{\mu_n}\|_{L^3}\|\nabla \phi_n\|_{\mathbf{L}^6}\|\partial_t\uu_n\|\notag \\
&\leq C\|\nabla \mu_n\|\|\partial_t\uu_n\| \notag \\
&\leq  \frac{1}{12}\|\partial_t\uu_n\|^2 +C\|\nabla \mu_n\|^2.
\label{duu-ta3}
\end{align}
Thus, we can deduce from the above estimates and \eqref{duu-ta} that
\begin{align}
\frac{3}{4}\|\partial_t\uu_n\|^2\leq C_*\|\mathbf{A}\uu_n\|^2 + C\|\nabla \uu_n\|^6 + C\|\nabla \uu_n\|^2 +C\|\nabla \mu_n\|^2,
\label{duu-ta2}
\end{align}
where $C_*, C>0$ are some constants depending on $\|\uu_0\|$, $\|\phi_0\|_{H^2}$, $\eta$ and $\Omega$, but independent of $n$ and $t$.
Multiplying the inequality \eqref{duu-ta2} by $\frac{\nu_*}{4C_*}>0$ and adding the resultant with \eqref{duu-A}, we arrive at the conclusion \eqref{duu-A1}.

The proof is complete.
\end{proof}
\begin{lemma}\label{lem:dmu}
The approximate solution $(\uu_n,\phi_n,\mu_n)$ satisfies
\begin{align}
& \frac{\mathrm{d}}{\mathrm{d}t}\left(\frac12\int_\Omega m(\phi_n)|\nabla \mu_n|^2\,\mathrm{d}x+ \big(\uu_n\cdot\nabla \phi_n,\mu_n\big)\right)
+\frac34\|\Delta\partial_t\phi_n\|^2\notag\\
&\quad \leq \frac{1}{16} \|\partial_t\uu_n\|^2 + C\|\nabla \mu_n\|^4+ C\|\nabla \mu_n\|^2+ C\|\nabla \uu_n\|^2,
\label{dmu-1}
\end{align}
where the constant $C>0$ depends on $\|\uu_0\|$, $\|\phi_0\|_{H^2}$, $\Omega$, $\eta$, but is independent of $n$ and $t$.
\end{lemma}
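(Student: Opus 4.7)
My plan is to produce the key identity by testing the phase-field equation in the Galerkin system \eqref{eq:weak} with $w=\partial_t\mu_n\in W_n$, which gives
\[
(\partial_t\phi_n,\partial_t\mu_n)+(\uu_n\cdot\nabla\phi_n,\partial_t\mu_n)+(m(\phi_n)\nabla\mu_n,\nabla\partial_t\mu_n)=0,
\]
and then rewriting the last term as $\tfrac{\mathrm{d}}{\mathrm{d}t}\tfrac12\int m(\phi_n)|\nabla\mu_n|^2\,\mathrm{d}x-\tfrac12\int m'(\phi_n)\partial_t\phi_n|\nabla\mu_n|^2\,\mathrm{d}x$ together with $(\uu_n\cdot\nabla\phi_n,\partial_t\mu_n)=\tfrac{\mathrm{d}}{\mathrm{d}t}(\uu_n\cdot\nabla\phi_n,\mu_n)-(\partial_t\uu_n\cdot\nabla\phi_n,\mu_n)-(\uu_n\cdot\nabla\partial_t\phi_n,\mu_n)$. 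This yields
\begin{align*}
& \frac{\mathrm{d}}{\mathrm{d}t}\!\left[\frac12\!\int_\Omega\! m(\phi_n)|\nabla\mu_n|^2\,\mathrm{d}x+(\uu_n\!\cdot\!\nabla\phi_n,\mu_n)\right]+(\partial_t\phi_n,\partial_t\mu_n) \\
& \quad =\frac12\!\int_\Omega\! m'(\phi_n)\partial_t\phi_n|\nabla\mu_n|^2\,\mathrm{d}x+(\partial_t\uu_n\!\cdot\!\nabla\phi_n,\mu_n)+(\uu_n\!\cdot\!\nabla\partial_t\phi_n,\mu_n).
\end{align*}
The good term $\|\Delta\partial_t\phi_n\|^2$ is then extracted from $(\partial_t\phi_n,\partial_t\mu_n)$ by differentiating in time the defining relations $\mu_n=\Pi_n[-\Delta\omega_n+(f'(\phi_n)+\eta)\omega_n]$ and $\omega_n=-\Delta\phi_n+f(\phi_n)$; since $\partial_t\phi_n\in W_n$ the projection drops, and the Neumann boundary conditions $\partial_{\mathbf{n}}\partial_t\phi_n=\partial_{\mathbf{n}}\partial_t\omega_n=0$ inherited from $W_n$ let two integrations by parts give
\[
(\partial_t\phi_n,\partial_t\mu_n)=\|\Delta\partial_t\phi_n\|^2-\big([2f'(\phi_n)+\eta]\partial_t\phi_n,\Delta\partial_t\phi_n\big)+\int_\Omega R(\phi_n,\omega_n)(\partial_t\phi_n)^2\,\mathrm{d}x,
\]
with $R=(f'(\phi_n)+\eta)f'(\phi_n)+f''(\phi_n)\omega_n$, which is uniformly bounded in $L^2$ by Lemma~\ref{low} and $H^2\hookrightarrow L^\infty$.

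The crucial auxiliary tool is a dual bound on $\partial_t\phi_n$. Testing the phase-field equation against $\Pi_n\psi$ for $\psi\in H^1(\Omega)$, invoking the $H^1$-stability $\|\nabla\Pi_n\psi\|\leq\|\nabla\psi\|$ of the spectral projection (from the identity $\|\nabla\psi\|^2=\sum_j\lambda_j|(\psi,w_j)|^2$), together with the Poincar\'e inequality in $\mathbf{V}_\sigma$ and the uniform bounds of Lemma~\ref{low}, produces
\[
\|\partial_t\phi_n\|_{(H^1)^*}\leq C\big(\|\nabla\mu_n\|+\|\nabla\uu_n\|\big).
\]
Combining this with the interpolation $\|g\|^2\leq\|g\|_{(H^1)^*}\|g\|_{H^1}$, the bound $\|g\|_{H^1}\leq C\|\nabla g\|$, and $\|\nabla g\|^2\leq\|g\|\|\Delta g\|$ (all valid for the zero-mean function $g=\partial_t\phi_n$ with Neumann data), and then using Young's inequality, I obtain
\[
\|\partial_t\phi_n\|^2\leq\epsilon\|\Delta\partial_t\phi_n\|^2+C_\epsilon\big(\|\nabla\mu_n\|^2+\|\nabla\uu_n\|^2\big),\qquad \forall\,\epsilon>0.
\]

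With these tools I would close the estimate by budgeting $\tfrac14\|\Delta\partial_t\phi_n\|^2$ of absorbed mass into four contributions of $\tfrac{1}{16}\|\Delta\partial_t\phi_n\|^2$. Specifically: the two remainders in $(\partial_t\phi_n,\partial_t\mu_n)$ are each dominated by $C\|\partial_t\phi_n\|\|\Delta\partial_t\phi_n\|$ (using $\|\partial_t\phi_n\|_{L^4}^2\leq C\|\partial_t\phi_n\|\|\Delta\partial_t\phi_n\|$ for the $R$-weighted term), whence Young plus the dual-norm interpolation yield $\tfrac{1}{16}\|\Delta\partial_t\phi_n\|^2+C(\|\nabla\mu_n\|^2+\|\nabla\uu_n\|^2)$ each; $\tfrac12\int m'(\phi_n)\partial_t\phi_n|\nabla\mu_n|^2\,\mathrm{d}x$ is controlled via $\|\partial_t\phi_n\|_{L^\infty}\leq C\|\partial_t\phi_n\|_{H^2}\leq C\|\Delta\partial_t\phi_n\|$ and Young to give $\tfrac{1}{16}\|\Delta\partial_t\phi_n\|^2+C\|\nabla\mu_n\|^4$; $(\uu_n\cdot\nabla\partial_t\phi_n,\mu_n)=-(\partial_t\phi_n,\uu_n\cdot\nabla\mu_n)$ (after integration by parts using $\nabla\cdot\uu_n=0$ and $\uu_n|_{\partial\Omega}=\mathbf{0}$) is dominated by $\|\partial_t\phi_n\|_{L^6}\|\uu_n\|_{\mathbf{L}^3}\|\nabla\mu_n\|\leq C\|\Delta\partial_t\phi_n\|\|\nabla\uu_n\|^{1/2}\|\nabla\mu_n\|$, contributing $\tfrac{1}{16}\|\Delta\partial_t\phi_n\|^2+C\|\nabla\uu_n\|^2+C\|\nabla\mu_n\|^4$; and $(\partial_t\uu_n\cdot\nabla\phi_n,\mu_n)=(\partial_t\uu_n\cdot\nabla\phi_n,\mu_n-\overline{\mu_n})$ (since $\partial_t\uu_n\in\mathbf{V}_\sigma$) is bounded by $\|\partial_t\uu_n\|\|\nabla\phi_n\|_{\mathbf{L}^3}\|\mu_n-\overline{\mu_n}\|_{L^6}\leq\tfrac{1}{16}\|\partial_t\uu_n\|^2+C\|\nabla\mu_n\|^2$ by H\"older and Poincar\'e--Wirtinger. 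The main obstacle is precisely the absorption of $C\|\partial_t\phi_n\|\|\Delta\partial_t\phi_n\|$-type terms into $\|\Delta\partial_t\phi_n\|^2$: naive Poincar\'e only gives $\|\partial_t\phi_n\|\leq C\|\Delta\partial_t\phi_n\|$ with a \emph{fixed}, possibly large, constant, which is insufficient; the dual-norm interpolation sidesteps this by trading a fractional power of $\|\Delta\partial_t\phi_n\|$ for $\|\partial_t\phi_n\|_{(H^1)^*}^2$, which the equation controls by $\|\nabla\mu_n\|^2+\|\nabla\uu_n\|^2$. Summing the four $\tfrac{1}{16}$-contributions produces exactly $\tfrac14\|\Delta\partial_t\phi_n\|^2$ on the right, leaving $\tfrac34\|\Delta\partial_t\phi_n\|^2$ on the left as claimed.
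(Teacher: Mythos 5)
Your proposal is correct and follows essentially the same route as the paper: test the phase-field equation with $\partial_t\mu_n$, extract $\|\Delta\partial_t\phi_n\|^2$ by differentiating the constitutive relations in time (the projection dropping since $\partial_t\phi_n\in W_n$), control $\|\partial_t\phi_n\|_{(H^1)^*}$ by $\|\nabla\mu_n\|+\|\nabla\uu_n\|$ from the equation, and use the dual-norm interpolation to absorb the $\|\partial_t\phi_n\|\,\|\Delta\partial_t\phi_n\|$-type terms. The only differences are cosmetic (e.g., estimating the $m'$-term via $\|\partial_t\phi_n\|_{L^\infty}\le C\|\Delta\partial_t\phi_n\|$ directly instead of Agmon plus the dual norm, integrating by parts in $(\uu_n\cdot\nabla\partial_t\phi_n,\mu_n)$, and splitting the absorbed mass as $4\times\tfrac1{16}$ rather than $3\times\tfrac1{12}$), and all of these close the estimate just as well.
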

\begin{proof}
Testing the second equation in \eqref{eq:weak} by $\partial_t \mu_n$, we obtain
\begin{align}
(\partial_t\phi_n, \partial_t \mu_n)+(\uu_n\cdot\nabla\phi_n, \partial_t \mu_n)=-(m(\phi_n)\nabla \mu_n, \nabla \partial_t \mu_n).
\label{dmu-3}
\end{align}
On the other hand, differentiating the fourth equation in \eqref{eq:weak} with respect to time and testing the resultant by $\partial_t\phi_n\in W_n$, we get
\begin{align}
(\partial_t \mu_n, \partial_t\phi_n)&=(\partial_t \Pi_n \Delta^2\phi_n, \partial_t \phi_n) + (\partial_t \Pi_n L(\phi_n),\partial_t \phi_n)\notag\\
&=(\Delta^2\partial_t \phi_n, \partial_t \phi_n) + (\partial_t L(\phi_n),\partial_t \phi_n),
\label{dmu-4}
\end{align}
where $L(\phi_n)$ is given by \eqref{LLL} and $\omega_n=-\Delta\phi_n+f(\phi_n)$.
Thus, due to \eqref{dmu-4} and integration by parts, \eqref{dmu-3} can be written as
\begin{align}
&\frac{\mathrm{d}}{\mathrm{d}t}\left(\frac12\int_\Omega m(\phi_n)|\nabla \mu_n|^2\,\mathrm{d}x+ \big(\uu_n\cdot\nabla \phi_n,\mu_n\big)\right)
+ \|\Delta \partial_t \phi_n\|^2 \notag\\
&\quad = \frac12 \int_\Omega m'(\phi_n)\partial_t\phi_n|\nabla\mu_n|^2\,\mathrm{d}x + (\partial_t\uu_n\cdot\nabla \phi_n, \mu_n) + (\uu_n\cdot\nabla \partial_t\phi_n,\mu_n)\notag\\
&\qquad -(\partial_t L(\phi_n),\partial_t \phi_n)\notag\\
&\quad =: I_1+I_2+I_3+I_4.
\label{dmu-2}
\end{align}
Since $\overline{\partial_t\phi_n}=0$, we infer from the Poincar\'{e}--Wirtinger inequality that
$$
\|\partial_t\phi_n\|_{V_0}\leq C\|\nabla \partial_t\phi_n\|,\quad \|\partial_t\phi_n\|_{H^2}\leq C\|\Delta \partial_t\phi_n\|,
$$
where $C>0$ only depends on $\Omega$. Besides, from the Gelfand triple $V_0^*\hookrightarrow L^2(\Omega)\cong L^2(\Omega)\hookrightarrow V_0$, we have the interpolation inequality
$\|\partial_t\phi_n\|\leq C\|\partial_t\phi_n\|_{V_0^*}^\frac12\|\nabla \partial_t\phi_n\|^\frac12$.
This combined with the fact $\|\nabla \partial_t\phi_n\|^2\leq \|\partial_t\phi_n\|\|\Delta\partial_t\phi_n\|$ yields that
$$
\|\partial_t\phi_n\|\leq C\|\partial_t\phi_n\|_{V_0^*}^\frac23\|\Delta \partial_t\phi_n\|^\frac13.
$$
As a consequence, the term $I_1$ can be estimated by using Agmon's inequality such that
\begin{align}
I_1&\leq \frac12 \|m'(\phi_n)\|_{L^\infty}\|\partial_t\phi_n\|_{L^\infty}\|\nabla \mu_n\|^2\notag \\
&\leq C \|\partial_t\phi_n\|_{H^1}^\frac12\|\partial_t\phi_n\|_{H^2}^\frac12\|\nabla \mu_n\|^2 \notag \\
&\leq C \|\partial_t\phi_n\|^\frac14\|\Delta\partial_t\phi_n\|^\frac34\|\nabla \mu_n\|^2 \notag \\
&\leq C \|\partial_t\phi_n\|_{V_0^*}^\frac16\|\Delta\partial_t\phi_n\|^\frac56\|\nabla \mu_n\|^2 \notag \\
&\leq \frac{1}{12} \|\Delta\partial_t\phi_n\|^2+ C\|\partial_t\phi_n\|_{V_0^*}^2+ C\|\nabla \mu_n\|^4.
\notag
\end{align}
Since
\begin{align*}
\|\partial_t\phi_n\|_{V_0^*} & =\sup_{w\in V_0,\ \|w_0\|_{V_0}=1} |(\uu_n\cdot\nabla \phi_n,w)+(m(\phi_n)\nabla \mu_n, \nabla w)|\\
&\leq C(\|\nabla \uu_n\|+\|\nabla \mu_n\|),
\end{align*}
we have
$$
I_1\leq \frac{1}{12} \|\Delta\partial_t\phi_n\|^2+ C\|\nabla \mu_n\|^4 +C\|\nabla \mu_n\|^2 + C\|\nabla \uu_n\|^2.
$$
The term $I_2$ can be estimated in the same way as in \eqref{duu-ta3} by choosing a different coefficient in Young's inequality such that
$$
I_2\leq \frac{1}{16}\|\partial_t\uu_n\|^2+C\|\nabla \mu_n\|^2.
$$
Next for $I_3$, we have
\begin{align*}
I_3&=(\uu_n\cdot\nabla \partial_t\phi_n,\mu_n-\overline{\mu_n})\\
&\leq \|\uu_n\|\|\nabla \partial_t\phi_n\|_{\mathbf{L}^6}\|\mu_n-\overline{\mu_n}\|_{L^3}\\
&\leq C\|\Delta\partial_t \phi\|\|\nabla \mu_n\|\\
&\leq \frac{1}{12} \|\Delta\partial_t\phi_n\|^2 +C\|\nabla \mu_n\|^2.
\end{align*}
A direct calculation yields that
\begin{align*}
\partial_t L(\phi_n)
&=\big[-2f''(\phi_n)\Delta\phi_n +f''(\phi_n)f(\phi_n) -f'''(\phi_n)|\nabla \phi_n|^2 +(f'(\phi_n)+\eta)f'(\phi_n)\big]\partial_t\phi_n\\
&\quad -2f''(\phi_n)\nabla \phi_n\cdot\nabla \partial_t\phi_n-(2f'(\phi_n)+\eta)\Delta\partial_t\phi_n.
\end{align*}
Thus, the term $I_4$ can be estimated by
\begin{align*}
I_4&\leq |(\partial_t L(\phi_n),\partial_t\phi_n)|\\
&\leq 2\|f''(\phi_n)\|_{L^\infty}\|\Delta\phi_n\|\|\partial_t\phi_n\|_{L^4}^2 + \|f''(\phi_n)\|_{L^\infty}\|f(\phi_n)\|_{L^\infty}\|\partial_t\phi_n\|^2\\
&\quad +\|f'''(\phi_n)\|_{L^\infty}\|\nabla \phi_n\|_{\mathbf{L}^6}^2\|\partial_t\phi_n\|_{L^3}^2
+\|f'(\phi_n)+\eta\|_{L^\infty}\|f'(\phi_n)\|_{L^\infty}\|\partial_t\phi_n\|^2\\
&\quad +2 \|f''(\phi_n)\|_{L^\infty}\|\nabla \phi_n\|_{\mathbf{L}^6}\|\nabla \partial_t\phi_n\|\|\partial_t\phi_n\|_{\mathbf{L}^3}
+\|2f'(\phi_n)+\eta\|_{L^\infty}\|\Delta\partial_t\phi_n\|\|\partial_t\phi_n\|\\
&\leq C\|\nabla \partial_t\phi_n\|^2+C\|\Delta\partial_t\phi_n\|\|\partial_t\phi_n\|\\
&\leq C\|\Delta\partial_t\phi_n\|\|\partial_t\phi_n\|\\
&\leq C\|\Delta\partial_t\phi_n\|^\frac43\|\partial_t\phi_n\|_{V_0^*}^\frac23\\
&\leq \frac{1}{12} \|\Delta\partial_t\phi_n\|^2 +C\|\nabla \mu_n\|^2 + C\|\nabla \uu_n\|^2.
\end{align*}
Collecting the above estimates, we can deduce the required inequality \eqref{dmu-1} from \eqref{dmu-2}.

The proof is complete.
\end{proof}

\textbf{Proof of Theorem \ref{thm:locstr}}. We are in a position to complete the proof of Theorem \ref{thm:locstr}.
Noticing that
\begin{align*}
|(\uu_n\cdot\nabla \phi_n,\mu_n)| &=|(\uu_n\cdot\nabla \phi_n,\mu_n-\overline{\mu_n})|\\
&\leq \|\uu_n\|_{\mathbf{L}^3}\|\nabla \phi_n\|_{\mathbf{L}^3}\|\mu_n-\overline{\mu_n}\|_{L^3}\\
&\leq C\|\nabla \phi_n\|_{\mathbf{L}^3}\|\nabla \uu_n\|\|\nabla \mu_n\|\\
&\leq \frac{C_1}{4m_*}\|\nabla \uu_n\|^2+\frac{m_*}{4}\|\nabla \mu_n\|^2,
\end{align*}
where $m_*>0$ is given by the assumption (A2) and $C_1>0$ depends on $\|\uu_0\|$, $\|\phi_0\|_{H^2}$, $\Omega$, $\eta$, but is independent of $n$ and $t$.
Set
\begin{equation}
\gamma=\min\left\{\frac{m_*}{C_1},\ \frac{\nu_*}{C_*}\right\}>0,\label{gamma}
\end{equation}
where $C_*>0$ is the constant specified in Lemma \ref{lem:duu}.
Multiplying \eqref{dmu-1} by $\gamma$ and adding the resultant with \eqref{duu-A1}, we find that
\begin{align}
&\frac{\mathrm{d}}{\mathrm{d}t} \left(\frac12 \|\nabla \uu_n\|^2+\frac{\gamma}{2}\int_\Omega m(\phi_n)|\nabla \mu_n|^2\,\mathrm{d}x +\gamma \big(\uu_n\cdot\nabla \phi_n,\mu_n\big)\right)\notag \\
&\qquad +\frac{\nu_*}{2}\|\mathbf{A}\uu_n\|^2 + \frac{\nu_*}{8C_*}\|\partial_t\uu_n\|^2 +\frac{3\gamma}{4} \|\Delta\partial_t\phi_n\|^2\notag\\
&\quad \leq C\|\nabla \uu\|^6 + C\gamma \|\nabla \mu_n\|^4 + C(1+\gamma)(\|\nabla \uu_n\|^2 +\|\nabla \mu_n\|^2).
\label{duu-mu}
\end{align}

Set
$$
\Lambda_n(t) = \frac12 \|\nabla \uu_n(t)\|^2+\frac{\gamma}{2}\int_\Omega m(\phi_n(t))|\nabla \mu_n(t)|^2\,\mathrm{d}x
+ \gamma (\uu_n(t)\cdot\nabla \phi_n(t),\mu_n(t)).
$$
Thanks to the assumption (A2) and the uniform estimate \eqref{unilow}, there exists some positive constant $m^*$ that is independent of $n$ satisfying $m^*>m_*$ and $\|m(\phi_n)\|_{L^\infty}\leq m^*$. As a consequence, it holds
\begin{align}
&\Lambda_n(t)\leq \|\nabla \uu_n(t)\|^2+ \gamma m^*\|\nabla \mu_n(t)\|^2,\label{LL1}\\
&\Lambda_n(t)\geq \frac14\|\nabla \uu_n(t)\|^2+ \frac{\gamma m_*}{4}\|\nabla \mu_n(t)\|^2.\label{LL2}
\end{align}
Besides, it follows from \eqref{duu-mu} that
\begin{align}
\frac{\mathrm{d}}{\mathrm{d}t}\Lambda_n(t)  +\frac{\nu_*}{2}\|\mathbf{A}\uu_n\|^2 + \frac{\nu_*}{8C_*}\|\partial_t\uu_n\|^2 +\frac{3\gamma}{4} \|\Delta\partial_t\phi_n\|^2\leq C\Lambda_n(t)^3,
\label{La}
\end{align}
where the positive constant $C$ depends on $\|\uu_0\|$, $\|\phi_0\|_{H^2}$, $\Omega$, $\eta$, $\nu_*$, $m_*$,  but is independent of $n$ and $t$.

From the assumption on the initial data, we easily  find that $\Lambda_n(0)\leq C$, where $C$ depends on $\|\uu_0\|_{\mathbf{V}_\sigma}$, $\|\phi_0\|_{H^5}$, $\eta$, $\nu_*$, $m_*$,  $\Omega$, but again is independent of $n$. Then we can deduce from the differential inequality \eqref{La} that there exists a finite time $T_0>0$, depending on $\|\uu_0\|_{\mathbf{V}_\sigma}$, $\|\phi_0\|_{H^5}$, $\eta$, $\nu_*$, $m_*$ and $\Omega$ such that
$$
\Lambda_n(t)+\int_0^{T_0} \left(\|\mathbf{A}\uu_n(t)\|^2 + \|\partial_t\uu_n(t)\|^2 + \|\Delta\partial_t\phi_n(t)\|^2\right)\mathrm{d}t\leq C,
$$
for all $t\in [0,T_0]$.

Thus, the above estimate together with the up/lower bounds for $\Lambda(t)$ implies that
\begin{align*}
&\uu_n\ \text{is bounded in}\ L^\infty(0,T_0;\mathbf{V}_\sigma)\cap L^2(0,T_0;\mathbf{W}_\sigma)\cap H^1(0,T_0;\mathbf{H}_\sigma),\\
&\phi_n\ \text{is bounded in}\ H^1(0,T_0;H^2(\Omega)),\\
&\mu_n\ \text{is bounded in}\ L^\infty(0,T_0; H^1(\Omega)),
\end{align*}
all uniformly in $n$. Combining these with the known uniform estimates obtained in Section \ref{sec:weak}, by a standard compactness argument, we can pass to the limit as $n\to+\infty$ and find a convergent subsequence, whose limit denoted by $(\uu, \phi, \mu,\omega)$ enjoys the same regularity indicated above and satisfies the original system a.e. in $\Omega\times (0,T_0)$. Moreover, the boundary and initial conditions are satisfied such that  $\uu=\mathbf{0}$ and $\partial_\mathbf{n}\phi=\partial_\mathbf{n}\Delta\phi=0$ on $\partial\Omega$; $\uu(\cdot,0)=\uu_0$ and $\phi(\cdot,0)=\phi_0$ in $\Omega$. To improve the spatial regularity, we can apply the elliptic estimate like in the previous section such that
$$
\omega\in L^\infty(0,T_0;H^3(\Omega)),\quad \phi\in L^\infty(0,T_0;H^5(\Omega)).
$$
To improve the regularity of $\mu$, we consider the Neumann problem with variable coefficient
\begin{equation}
\begin{cases}
-\nabla \cdot(m(\phi)\nabla u)=f,\quad \text{in}\ \Omega,\\
\partial_\mathbf{n}u=0,\qquad\qquad \qquad \ \text{on}\ \partial\Omega.
\end{cases}\label{Neu}
\end{equation}
We note that $\phi\in L^\infty(0,T_0; H^5(\Omega))$ and $0<m_*\leq \|m(\phi)\|_{L^\infty}\leq m^*$ for some $m^*>m_*>0$. Besides, from
\begin{align*}
\|\uu\cdot\nabla \phi\|_{H^2}&\leq C(\|\uu\|_{\mathbf{L}^\infty}\|\nabla \phi\|_{\mathbf{H}^2}+\|\mathbf{A}\uu\|\|\nabla \phi\|_{\mathbf{L}^\infty})\\
&\leq C\|\mathbf{A}\uu\|^\frac12 + C\|\mathbf{A}\uu\|,
\end{align*}
we have $\uu\cdot\nabla \phi\in L^2(0,T_0;H^2(\Omega))$. Using the above facts and $\partial_t\phi\in L^2(0,T_0;H^2(\Omega))$, we can apply \cite[Theorem 2.1]{Gi20} to problem \eqref{Neu} with $u=\mu$ and $f=\partial_t\phi +\uu\cdot\nabla \phi$ to conclude that
$$
\|\mu\|_{H^2}\leq C(1+\|\phi\|_{W^{1,\infty}})\|\partial_t\phi +\uu\cdot\nabla \phi\|,
$$
and thus $\mu\in L^2(0,T_0;H^2_N(\Omega))$. From the following elliptic estimate for $\omega$
\begin{align*}
\|\omega\|_{H^4}& \leq C(\| \mu-f'(\phi)\omega-\eta\omega\|_{H^2}+\|\omega\|)\notag\\
& \leq C(\|\mu\|_{H^2}+\|f'(\phi)+\eta\|_{L^\infty}\|\omega\|_{H^2} + \|f'(\phi)+\eta\|_{H^2}\|\omega\|_{L^\infty}+\|\omega\|),
\end{align*}
we find that $\omega\in L^2(0,T_0,H^4(\Omega))$. This combined with the elliptic estimate for $\phi$
\begin{align*}
\|\phi\|_{H^6}
&\leq C(\|\omega-f(\phi)\|_{H^4}+\|\phi\|)\\
&\leq C\|\omega\|_{H^4} +C(\|\phi\|_{L^\infty}^2\|\phi\|_{H^4}+\|\phi\|_{H^4}),
\end{align*}
further leads to $\phi\in L^2(0,T_0,H^6(\Omega))$. Finally, we note that the pressure
$P\in L^\infty(0,T_0;H^1(\Omega))$ can be recovered through the De Rham theorem (see e.g., \cite{Te}).

Concerning the uniqueness, we note that every strong solution to problem \eqref{NS}--\eqref{IC} is also a weak solution. Moreover, the regularity of strong solution
$\uu\in L^\infty(0,T_0;\mathbf{V}_\sigma)\cap L^2(0,T_0;\mathbf{W}_\sigma)$ implies that $\uu\in L^4(0,T_0;\mathbf{L}^6(\Omega))\cap L^\frac{4}{3}(0,T_0;\mathbf{W}^{1,6}(\Omega))$, which verifies the condition $\mathrm{(a)}$ with $p=6$, $q=4$ and condition $\mathrm{(b)}$ with $p=6$, $q=\frac{4}{3}$ in Theorem \ref{thm:weakuni}. Therefore, the strong solution obtained above is indeed unique.

The proof of Theorem \ref{thm:locstr} is complete. \hfill $\square$

\begin{remark}
For strong solutions, a continuous dependence estimate for $(\uu, \phi)$ with respect the initial data in $\mathbf{L}^2(\Omega)\times H^2(\Omega)$ can be derived by using the standard energy method. On the other hand, since Theorem \ref{thm:weakuni} provides a comparison between two weak solutions of which only one possesses extra regularity, it combined with Theorems \ref{thm:weak} and \ref{thm:locstr} indeed implies a weak-strong uniqueness result for problem \eqref{NS}--\eqref{IC}. Namely, whenever a strong solution exists, a weak solution emanating from the same initial data coincides with the strong solution on its existence interval.
\end{remark}

\begin{remark}
We note that the boundary condition \eqref{boundary} does not imply  $\partial_\mathbf{n}\Delta^2 \phi=0$ on $\partial \Omega$ for the solution to  problem \eqref{NS}--\eqref{IC} whenever the trace of the normal derivative of $\Delta^2\phi$  makes sense. Thus, we cannot apply the argument in the periodic setting (see \cite{CW20,WX13}) to derive a global-in-time estimate on $\|\nabla \Delta \phi\|$ by testing the phase-field equation \eqref{phasefield} with $-\Delta^3 \phi$ (or equivalently, testing the equation \eqref{pot1} by $\partial_t\Delta \phi$). Actually, we have $\partial_\mathbf{n}\Delta^2\phi=\partial_\mathbf{n}\Delta f(\phi)= 6\phi\nabla(|\nabla \phi|^2)\cdot\mathbf{n}$, which does not vanish  on $\partial\Omega$ (cf. \cite{CG18,CG19}).
\end{remark}

\subsection{Blow-up criteria}
In this section, we prove Theorem \ref{thm:buc}. It turns out that the blow-up criteria for problem \eqref{NS}--\eqref{IC} again only involve the velocity field and is close to those for the Navier--Stokes equations in a bounded three dimensional domain (cf. e.g., \cite{Be02, Ga00,Se62}).\smallskip

\textbf{Proof of  Theorem \ref{thm:buc}}.
Since we have already shown that problem  \eqref{NS}--\eqref{IC} admits a unique local strong solution for any $(\uu_0, \phi_0) \in \mathbf{V}_\sigma\times (H^5(\Omega)\cap H^4_N(\Omega))$, we only need to establish a priori estimates for the strong solution. Most estimates can be carried out along the lines for approximate solutions in the previous subsection, except two terms in \eqref{term 1} and \eqref{duu-ta1}.

We first consider condition $\mathrm{(a)}$. For any $p>3$, we deduce that (cf. \cite{Ga00})
\begin{align*}
|((\uu\cdot\nabla)\uu,\mathbf{A}\uu)|
&\leq \|\uu\|_{\mathbf{L}^p}\|\nabla \uu\|_{\mathbf{L}^\frac{2p}{p-2}}\|\mathbf{A}\uu\|\notag\\
&\leq
C\|\uu\|_{\mathbf{L}^{p}} \left(\|\mathbf{A} \uu\|^\frac{3}{p}\|\nabla \uu\|^\frac{p-3}{p}\right)\|\mathbf{A}\uu\|
\notag\\
&\leq \frac{\nu_*}{12}\|\mathbf{A}\uu\|^2 +C\|\uu\|^\frac{2p}{p-3}_{\mathbf{L}^p}\|\nabla
\uu\|^2,
\end{align*}
and
\begin{align*}
|((\uu\cdot\nabla)\uu,\partial_t\uu)|&\leq \|\uu\|_{\mathbf{L}^p}\|\nabla \uu\|_{\mathbf{L}^\frac{2p}{p-2}}\|\partial_t\uu\|\notag\\
&\leq C\|\uu\|_{\mathbf{L}^{p}}\left(\|\mathbf{A} \uu\|^\frac{3}{p}\|\nabla \uu\|^\frac{p-3}{p}\right)\|\partial_t\uu\|  \notag\\
&\leq \frac{1}{12}\|\partial_t\uu\|^2+\|\mathbf{A}\uu\|^2 +C\|\uu\|^\frac{2p}{p-3}_{\mathbf{L}^p}\|\nabla
\uu\|^2.
\end{align*}
Introduce
\begin{align}
\Lambda(t) = \frac12 \|\nabla \uu(t)\|^2+\frac{\gamma}{2}\int_\Omega m(\phi(t))|\nabla \mu(t)|^2\,\mathrm{d}x+ \gamma (\uu(t)\cdot\nabla \phi(t),\mu(t)),
\label{LL3}
\end{align}
where the constant $\gamma>0$ is chosen as in \eqref{gamma}, depending on $\|\uu_0\|$, $\|\phi_0\|_{H^2}$, $\Omega$, $\eta$, $\nu_*$, $m_*$, but independent of $t$.
The quantity $\Lambda(t)$ also fulfills similar inequalities like \eqref{LL1}, \eqref{LL2}. In analogy to \eqref{duu-mu}, we can deduce that
\begin{align}
&\frac{\mathrm{d}}{\mathrm{d}t} \Lambda(t)  +\frac{\nu_*}{2}\|\mathbf{A}\uu\|^2 + \frac{\nu_*}{8C_*}\|\partial_t\uu\|^2 +\frac{3\gamma}{4}\|\Delta\partial_t\phi\|^2\notag\\
&\quad \leq C\|\uu\|^\frac{2p}{p-3}_{\mathbf{L}^p}\|\nabla
\uu\|^2 + C\gamma \|\nabla \mu\|^4 + C(1+\gamma)(\|\nabla \uu\|^2 + \|\nabla \mu\|^2)\notag\\
&\quad \leq C(1+\gamma)\left( \|\uu\|^\frac{2p}{p-3}_{\mathbf{L}^p}+ \|\nabla \mu\|^2+1\right)\Lambda(t).
\label{duu-mur1}
\end{align}
It follows from \eqref{int} and condition $\mathrm{(a)}$ that $ \|\uu\|^\frac{2p}{p-3}_{\mathbf{L}^p}+\|\nabla\mu\|^2\in L^1(0,T_1)$. Hence, from Gronwall's lemma, we obtain
 $$
 \Lambda(t)\leq C_{T_1}<+\infty,\quad \forall\,t\in [0,T_1],
 $$
which implies that the $\mathbf{V}_\sigma\times H^1$ norm of  $(\uu, \mu)$ is bounded on $[0,T_1]$. From the elliptic estimate, $\|\phi(t)\|_{H^5}$ is also bounded on $[0,T_1]$. Thus, $[0, T_1)$ cannot be the maximal interval of existence, and the local strong solution $(\uu, \phi)$ can be extended beyond $T_1$.

Next, we consider condition  $\mathrm{(b)}$. For any  $p\in (\frac{3}{2},3)$, from the Sobolev embedding theorem in three dimensions, we take $r=\frac{3p}{3-p}\in (3,+\infty)$ such that such that $\mathbf{W}^{1,p}(\Omega)\hookrightarrow \mathbf{L}^r(\Omega)$. Then it holds
\begin{align*}
|((\uu\cdot\nabla)\uu,\mathbf{A}\uu)|
&\leq C\|\uu\|_{\mathbf{L}^r}\|\nabla \uu\|_{\mathbf{L}^\frac{2r}{r-2}}\|\mathbf{A}\uu\|\notag\\
&\leq
C\|\nabla \uu\|_{\mathbf{L}^{p}}\|\nabla \uu\|_{\mathbf{L}^\frac{6p}{5p-6}}\|\mathbf{A}\uu\|
\notag\\
&\leq C\|\nabla \uu\|_{\mathbf{L}^{p}}\|\nabla\uu\|^{2-\frac{3}{p}}\|\mathbf{A}\uu\|^\frac{3}{p}\\
&\leq \frac{\nu_*}{12}\|\mathbf{A}\uu\|^2 +C\|\nabla \uu\|^\frac{2p}{2p-3}_{\mathbf{L}^p}\|\nabla
\uu\|^2,
\end{align*}
and in a similar manner,
\begin{align*}
|((\uu\cdot\nabla)\uu,\partial_t\uu)|
&\leq C\|\uu\|_{\mathbf{L}^r}\|\nabla \uu\|_{\mathbf{L}^\frac{2r}{r-2}}\|\partial_t\uu\|\notag\\
&\leq C\|\nabla \uu\|_{\mathbf{L}^{p}}\|\nabla\uu\|^{2-\frac{3}{p}}\|\mathbf{A}\uu\|^{\frac{3}{p}-1}\|\partial_t\uu\| \notag\\
&\leq \frac{1}{12}\|\partial_t\uu\|^2+\|\mathbf{A}\uu\|^2 +C\|\nabla \uu\|^\frac{2p}{2p-3}_{\mathbf{L}^p}\|\nabla
\uu\|^2.
\end{align*}
If $p= 3$, we simply have
\begin{align*}
|((\uu\cdot\nabla)\uu,\mathbf{A}\uu)|
&\leq C\|\uu\|_{\mathbf{L}^6}\|\nabla \uu\|_{\mathbf{L}^3}\|\mathbf{A}\uu\|\notag\\
&\leq
C\|\nabla \uu\|\|\nabla \uu\|_{\mathbf{L}^3}\|\mathbf{A}\uu\|
\notag\\
&\leq \frac{\nu_*}{12}\|\mathbf{A}\uu\|^2 +C\|\nabla \uu\|^2_{\mathbf{L}^3}\|\nabla
\uu\|^2,
\end{align*}
and
\begin{align*}
|((\uu\cdot\nabla)\uu,\partial_t\uu)|
&\leq C\|\uu\|_{\mathbf{L}^6}\|\nabla \uu\|_{\mathbf{L}^3}\|\partial_t\uu\|\notag\\
&\leq C\|\nabla \uu\|\|\nabla \uu\|_{\mathbf{L}^3}\|\partial_t\uu\| \notag\\
&\leq \frac{1}{12}\|\partial_t\uu\|^2 +C\|\nabla \uu\|^2_{\mathbf{L}^3}\|\nabla
\uu\|^2.
\end{align*}
Hence, we can deduce that for $p\in (\frac{3}{2},3]$
\begin{align}
&\frac{\mathrm{d}}{\mathrm{d}t} \Lambda(t)  +\frac{\nu_*}{2}\|\mathbf{A}\uu\|^2 + \frac{\nu_*}{8C_*}\|\partial_t\uu\|^2 +\frac{3\gamma}{4}\|\Delta\partial_t\phi\|^2\notag\\
&\quad \leq C\|\nabla \uu\|^\frac{2p}{2p-3}_{\mathbf{L}^p}\|\nabla
\uu\|^2 + C\gamma \|\nabla \mu\|^4 + C(1+\gamma)(\|\nabla \uu\|^2 + \|\nabla \mu\|^2)\notag\\
&\quad \leq C(1+\gamma)\left( \|\nabla \uu\|^\frac{2p}{2p-3}_{\mathbf{L}^p}+ \|\nabla \mu\|^2+1\right)\Lambda(t).
\label{duu-mur2}
\end{align}
Thanks to \eqref{int} and condition $\mathrm{(b)}$, we arrive at the conclusion by applying Gronwall's lemma.

The proof of Theorem \ref{thm:buc} is complete.
\hfill $\square$

\section{Appendix: Model Derivation}\label{sec:derivation}
\setcounter{equation}{0}
The energetic variational approach gives a unified variational framework in studying complex fluids with micro-structures (see e.g., \cite{BKL16, DLRW09, HKL, GKL18}). In this appendix, we present a formal derivation of the hydrodynamic system \eqref{NS}--\eqref{pot2} via the energetic variational approach, which provides a further understanding of the complex membrane-fluid interactions. In order to derive a system for the conserved order parameter $\phi$ (corresponding to the volume conservation of the vesicle), we adapt the argument presented in \cite{BKL16,GKL18} for the Navier--Stokes--Cahn--Hilliard system describing the dynamics of binary fluid mixtures.

From the energetic point of view, the system \eqref{NS}--\eqref{pot2} exhibits competitions between the macroscopic kinetic energy for the fluid and the microscopic elastic bending energy for the functionalized membrane or mixtures with an amphiphilic structure, that is, the coupling involves different scales (macroscopic hydrodynamics with mesoscopic interfacial morphology).
To begin, we assume the energy law
$$
\frac{\mathrm{d}}{\mathrm{d}t} E^{\text{tot}}=-\mathcal{D}^{\text{tot}}.
$$
The total energy combines the kinetic energy with the elastic energy such that
$$
E^{\text{tot}}(\uu,\phi)=\frac12\int_\Omega |\uu |^2\,\mathrm{d}x +E(\phi),
$$
where $E(\phi)$ is given by \eqref{defiE}. On the other hand, the total energy dissipation are assumed to consist of two contributions, from the fluid viscosity and dissipation on the interface (due to relative drag in different scales), that is (cf. \cite{BKL16})
$$
\mathcal{D}^{\text{tot}}= \int_\Omega 2\nu(\phi)|D\uu|^2 \,\mathrm{d}x  +\int_\Omega \frac{\phi^2}{m(\phi)} |\mathbf{v}-\mathbf{u}|^2\,\mathrm{d}x.
$$
Here, $\uu$ denotes the macroscopic fluid velocity of the fluid and $\mathbf{v}$ denotes the ``effective velocity'' for the phase-field function, which includes the microscopic effect due to diffusion. They are assumed to be independent due to scale separation. Besides, we do not consider boundary effects here and simply assume that
\begin{align}
\uu=\mathbf{0},\quad \partial_\mathbf{n}\phi=\partial_\mathbf{n}\Delta \phi=\mathbf{v}\cdot\mathbf{n}=0,\quad \text{on}\ \partial\Omega.\label{DBB}
\end{align}

\textit{(1) Derivation at the macroscopic level}. The motion is related to the flow map $x(X, t)$ determined by macroscopic fluid velocity $\uu$ such that
\be
\frac{\mathrm{d}x}{\mathrm{d}t}=\uu(x(X, t), t), \;\; x(X, 0)=X.\non
\ee
The deformation tensor $\mathsf{F}$ associated with the flow map
is given by $\mathsf{F}_{ij}=\frac{\partial x_i}{\partial X_j}$. Here we have
${\rm det}\mathsf{F}=1$, which is equivalent to the incompressibility condition $\nabla \cdot \uu=0$.
At the macroscopic level, the phase function $\phi$ is purely transported by the velocity $\uu$, namely,
$$
\partial_t \phi+\uu\cdot\nabla\phi=0.
$$
The action functional $\mathcal{A}$ is given via the Legendre transform of the total energy
 \bea
\mathcal{A}(x)&=&\int_0^T\int_{\Omega_0^X}\Big[\frac12|x_t(X,t)|^2-E(\phi(x(X,t),t))\Big]{\rm det}\mathsf{F}\,\mathrm{d}X\mathrm{d}t,\notag
\eea
 with $\Omega_0^X$ being the original domain. In order to perform variation under the constraint $\nabla \cdot \uu=0$, we can take a one-parameter family of volume preserving flow maps (see \cite{DLRW09,HKL,WX13})
\[
x^s(X, t) \ \ \mbox{with} \ \ x^0(X,t)=x(X,t), \ \ \frac{\mathrm{d}}{\mathrm{d}s}x^{s}(X,t)\Big|_{s=0}=\mathbf{w}(X,t),
\]
where $\mathbf{w}$ is an arbitrary divergence free vector field. The velocity $\tilde{\mathbf{w}}$ associated with $\mathbf{w}$ in the Eulerian coordinates is defined by $\tilde{\mathbf{w}}(x(X,t),t)=\mathbf{w}(X,t)$.
The volume-preserving condition is equivalent to ${\rm det}\mathsf{F}^s={\rm det}(\nabla_X x^s) =1$.
In addition, given a function $\phi_0 : \Omega_0^X \rightarrow \mathbb{R}$, we define the perturbation of $\phi$ by
$\phi^s : \Omega^s \times [0, T]\rightarrow \mathbb{R}$, which takes the constant value $\phi_0(X)$ along the particle trajectory
starting from $X$ under the motion $x^s$ such that
 \be
\phi^s(x^s(X, t), t)=\frac{\phi_0(X)}{{\rm det}\mathsf{F}^s}=\phi_0(X). \non
 \ee
Moreover, by definition, we have $\phi^0=\phi$.
 Using the chain rule, we find that (see \cite{DLRW09})
 $$
 \partial_{s}{\phi^s}(x^s)+\left(\frac{\mathrm{d}}{\mathrm{d}s}{x}^{s}\right)\cdot \nabla_{x^s}\phi^s(x^s)=\frac{\mathrm{d}}{\mathrm{d}s}\phi_0(X)=0.
 $$
 Taking $s=0$, we get
 \be
\partial_{s}{\phi^s}(x^s) \Big|_{s=0}+\tilde{\mathbf{w}}\cdot\nabla_x\phi=0. \notag
 \ee

The least action principle states that the trajectories of particles from the position
$x(X,0)$ at time $t=0$ to $x(X,T)$ at a given time $T$ in a Hamiltonian system are those
that minimize the action functional $\mathcal{A}$. Taking variation of the action functional $\mathcal{A}$ with respect to the flow map $x$ leads to the conservative force (cf. \cite[Section 2.2.1]{SV}) such that $\delta_x\mathcal{A}:=(F_{\text{inertial}}+F_{\text{conv}})\cdot \delta x$. Here, for every smooth, one-parameter family of motions $x^s$ with $x^s(\cdot, 0)= x(\cdot,0)$ and $x^s(\cdot, T)=x(\cdot,T)$, we have (pushing forward some part to the Eulerian coordinates just for simplicity of presentation)
\bea
\delta_x\mathcal{A}=\frac{\mathrm{d}}{\mathrm{d}s}\mathcal{A}(x^s)\Big|_{s=0}
&=&\int_0^T \left(\int_{\Omega_0^X}x_t\cdot
\mathbf{w}_t\, \mathrm{d}X
-\int_\Omega \frac{\delta{E}}{\delta\phi}\partial_s\big(\phi^s(x^s)\big)\Big|_{s=0}\, \mathrm{d}x\right) \mathrm{d}t  \non\\
&=&-\int_0^T\int_{\Omega}\Big(\partial_t \uu+\uu\cdot\nabla{\uu}
-\frac{\delta{E}}{\delta\phi}\nabla_x\phi\Big)\cdot\tilde{\mathbf{w}}\,\mathrm{d}x\mathrm{d}t. \non
\eea
Since $\tilde{\mathbf{w}}$ is arbitrary, we can formally write down the
inertial and conservative forces (in the strong form)
\be
F_{\text{inertial}}+F_{\text{conv}}=-(\partial_t \uu+\uu\cdot\nabla{\uu}+\nabla P_1)+\frac{\delta{E}}{\delta\phi}\nabla\phi,
\notag
\ee
where $P_1$ is understood as a Lagrangian multiplier for the incompressibility condition.

Next, we derive dissipative force by applying Onsager's maximum dissipation principle \cite{O31,O31-2}.
As in \cite{BKL16}, because of scale separation assumption, we only take variation of the dissipation functional due the fluid viscosity
with respect to $\uu$. Thus, for the Rayleigh dissipation functional $\mathcal{R}(\uu):= \int_\Omega \nu(\phi)|D\uu|^2 \,\mathrm{d}x$ (which is half of the original dissipation),  we can derive the generalized dissipative force by noticing that  $\delta_\uu  \mathcal{R}:=-F_{\text{diss}}\cdot\delta\uu$ (cf. \cite[Section 2.2.1]{SV}). Let $\mathbf{w}$ be an arbitrary smooth vector function with $\nabla\cdot{\mathbf{w}}=0$.
We have
\bea \delta_\uu\mathcal{R}(\uu)=\frac{\mathrm{d}\mathcal{R}(\uu+s{\mathbf{w}})}{\mathrm{d}s}\Big|_{s=0}
  &=& \int_{\Omega}(2\nu(\phi)D{\uu}) : D{\mathbf{w}}\,\mathrm{d}x\non\\
  &=&-\int_{\Omega}\nabla \cdot (2\nu(\phi)D{\uu})\cdot \mathbf{w}\,\mathrm{d}x, \non
\eea
which yields the dissipative force (formally written in the strong form)
\be
F_{\text{diss}}= \nabla \cdot (2\nu(\phi)D{\uu}) +\nabla{P}_2,
\notag
\ee
where $P_2$ is also a Lagrangian multiplier for the incompressibility condition.

From the classical Newton's force balance law $F_{\text{inertial}}+F_{\text{conv}}+F_{\text{diss}}=\mathbf{0}$, we arrive at the momentum equation
\begin{align}
&\partial_t \uu+\uu\cdot\nabla{\uu}-\nabla \cdot (2\nu(\phi)D{\uu}) +\nabla P= \frac{\delta{E}}{\delta\phi}\nabla\phi, \label{D1}\\
&\nabla \cdot\uu=0,\label{D2}
\end{align}
where $P=P_1-P_2$ is a Lagrangian multiplier for the constraint $\nabla\cdot \mathbf{u}=0$. Besides, we denote the chemical potential by
\begin{align}
\mu=\frac{\delta{E}}{\delta\phi} = -\Delta \omega + f'(\phi)\omega + \eta\omega, \quad \omega = -\Delta \phi + f(\phi).
\label{D3}
\end{align}

\textit{(2) Derivation at the microscopic level}. The motion is related to the flow map $x(X, t)$ now determined by the effective velocity $\mathbf{v}$:
\be
   \frac{\mathrm{d}x}{\mathrm{d}t}=\mathbf{v}(x(X, t), t), \;\; x(X, 0)=X.\non
\ee
Note that here we do not impose the constraint $\nabla \cdot\mathbf{v}=0$ for $\mathbf{v}$.
Concerning the kinematics, the evolution of the phase function is assumed to satisfy a continuity equation (see \cite{BKL16}):
\begin{align}
\partial_t\phi+\nabla \cdot(\phi\mathbf{v})=0,\notag
\end{align}
which yields that $\phi$ is a conserved order parameter as required. By computations similar to \cite[Appendix]{LW19}, neglecting the kinetic energy due to the assumption of scale separation, we can take the variation of the reduced action functional
$$
\widehat{\mathcal{A}}(x)=-\int_0^T\int_{\Omega_0^X}E(\phi(x(X,t),t)){\rm det}\mathsf{F}\,\mathrm{d}X\mathrm{d}t,
$$
and obtain
$$
\delta_x\widehat{\mathcal{A}}=\frac{\mathrm{d}}{\mathrm{d}s}\widehat{\mathcal{A}}(x+sy)\Big|_{s=0}=-\int_0^T\int_\Omega \phi\left(\nabla \frac{\delta{E}}{\delta\phi}\right) \cdot \tilde{y}\,\mathrm{d}x\mathrm{d}t,
$$
where $\tilde{y}$ is an arbitrary smooth vector $y(X,t)=\tilde{y}(x(X,t),t)$ satisfying $\tilde{y}\cdot \mathbf{n}=0$. In this way, we derive the generalized conservative force (formally written in the strong form)
$$
\widehat{F}_{\text{conv}}= - \phi\nabla \frac{\delta{E}}{\delta\phi}=-\phi\nabla \mu.
$$
Next, by applying Onsager's maximum dissipation principle to the dissipation function $\widehat{\mathcal{R}}(\mathbf{v})= \frac12\int_\Omega \frac{\phi^2}{m(\phi)} |\mathbf{v}-\mathbf{u}|^2\,\mathrm{d}x$, we obtain that
\bea \delta_\mathbf{v}\widehat{\mathcal{R}}(\mathbf{v})=\frac{\mathrm{d}\widehat{\mathcal{R}}(\mathbf{v}+s\mathbf{w})}{\mathrm{d}s}\Big|_{s=0}
  &=& \int_{\Omega}\frac{\phi^2}{m(\phi)} (\mathbf{v}-\mathbf{u}) \cdot \mathbf{w}\,\mathrm{d}x,
 \non
  \eea
which yields the dissipative force (formally written in the strong form)
\be
\widehat{F}_{\text{diss}}= -\frac{\phi^2}{m(\phi)} (\mathbf{v}-\mathbf{u}),
\notag
\ee
Using the force balance relation  $\widehat{F}_{\text{inertial}}+\widehat{F}_{\text{conv}}+\widehat{F}_{\text{diss}}=\mathbf{0}$ again (with $\widehat{F}_{\text{inertial}}=\mathbf{0}$ at the microscopic level), we can deduce that
$$
\phi\mathbf{v}=\phi\uu-m(\phi)\nabla\mu.
$$
Inserting the above expression into the continuity equation for $\phi$ and using $\nabla \cdot\uu=0$, we can deduce the Cahn--Hilliard type equation with convection
\begin{align}
\partial_t \phi+\uu\cdot\nabla \phi=\nabla \cdot(m(\phi)\nabla \mu).
\label{D4}
\end{align}

\textit{Summary}. In view of \eqref{D1}--\eqref{D4}, we finally arrive at the system \eqref{NS}--\eqref{pot2}. Besides, from \eqref{DBB} and the expression of $\mathbf{v}$, we find that $\partial_\mathbf{n}\mu=0$ on $\partial\Omega$, provided that $m(\phi)$ is not degenerate. Hence, the boundary condition \eqref{boundary} is also recovered.

\bigskip
\noindent \textbf{Acknowledgments:}
The first author was partially supported by NSF of China 12071084 and the Shanghai Center for Mathematical Sciences at Fudan University.


\end{document}